\numberwithin{equation}{section}
\def\paragraph{\@startsection{paragraph}{4}%
  \z@\z@{-\fontdimen2\font}%
  {\normalfont\bfseries}}
\newcommand{\der}{\operatorname{Der}}
\newcommand{\qstirB}{\operatorname{Stir}^B_q}
\newcommand{\stB}{\operatorname{st}^B}
\newtheorem{theorem}{Theorem}[section]
\newtheorem{definition}{Definition}[section]
\newtheorem{conjecture}[theorem]{Conjecture}
\newtheorem{corollary}{Corollary}[theorem]
\newtheorem{lemma}[theorem]{Lemma}
\title{The superspace coinvariant ring of type B}
\author{Sutanay Bhattacharya}
\begin{document}
\begin{abstract}
    Given the rank $n$ superspace $\Omega_n$, the ring of polynomial-valued differential forms on $\mathbb C^n$, one can define an action of hyperoctahedral group $\mathfrak B_n$ on it. This leads to a superspace coinvariant ideal $SR_n^B$, defined as the quotient of $\Omega_n$ by two-sided ideal generated by all $\mathfrak B_n$ invariants with vanishing constant terms. We derive the Hilbert series of $SR^B_n$ conjectured by Sagan and Swanson, and prove an operator theorem that yields a concrete description of the superharmonic space $SH^B_n$ associated to $SR^B_n$ as conjectured by Swanson and Wallach. We also derive an explicit basis of $SR^B_n$ using the theory of hyperplane arrangements.
\end{abstract}
\maketitle

\section{Introduction}
Let $n$ be a fixed positive integer, and let $\mathbb{C}[\mathbf x_n]$ denote the ring of polynomials over $\mathbb C$ in $n$ variables: $\mathbb{C}[\mathbf x_n]=\mathbb C[x_1,\ldots, x_n]$. There is a natural action of the symmetric group $\mathfrak S_n$ on $\mathbb{C}[\mathbf x_n]$ given by $\sigma(x_i)=x_{\sigma(i)}$. The \textit{coinvariant ideal} $I_n$ is generated by all $\mathfrak S_n$-invariants with vanishing constant term. This, together with the \textit{coinvariant ring} obtained by forming the quotient ring $R_n=\mathbb{C}[\mathbf x_n]/I_n$ are both important objects in algebraic combinatorics. E. Artin (\cite{artin1944galois}) showed that the monomials $\{x_1^{a_1}\cdots x_n^{a_n}:a_i<i\}$ in $\mathbb{C}[\mathbf x_n]$ descend to a basis of $R_n$, and one may calculate the Hilbert series of $R_n$ to be $$\operatorname{Hilb}(R_n;q)=[n]!_q$$ using this explicit basis. Here $[n]!_q$ denotes the $q$-factorial given by $$[n]!_q=[n]_q[n-1]_1\cdots [1]_q.$$where $[\cdot]_q$ refers to the $q$-analog of integers defined by $$[k]_q=1+q+\cdots+q^{k-1}.$$

This construction may be generalized to all \textit{pseudo-reflection groups}. These are defined as finite groups of unitary matrices generated by non-identity linear transformations of finite order that fix a hyperplane pointwise. Shepherd and Todd (\cite{shephard1954finite}) classified all pseudo-reflection groups: these include the symmetric group $\mathfrak S_n$ as well as the group of signed permutations $\mathfrak B_n$ as two notable special cases. One may define a corresponding coinvariant ring for each of these groups (see \cite{swanson2021harmonic} for details): properties of these rings are well-studied (\cite{chevalley1955invariants},\cite{borel1953cohomologie}).

In this article, we consider a similar construction on the \textit{superspace of rank n}, denoted $\Omega_n$, which is obtained by adjoining $n$ anti-commuting variables $\theta_1,\ldots, \theta_n$ to $\mathbb{C}[\mathbf x_n]$ that commute with the $x_i$'s. This may be thought of as the ring of all polynomial-valued differential forms on $\mathbb C^n$, where $\theta_i$ plays the role of $\mathrm{d}x_1$. Given any vector space $V$ and a subgroup $G\subseteq GL(V)$, $G$ has a natural action on $\Omega(V)$, the space of differential forms on $V$. In our case, this gives rise to an action of any pseudo-reflection group on $\Omega_n$ (details may be found in \cite{swanson2021harmonic}). This leads one to define a family of \textit{superspace coinvariant rings} as quotients of $\Omega_n$. 

Quotients of $\Omega_n$ have been the subject of significant research in recent years (\cite{zabrocki2019module},\cite{rhoades2022set},\cite{rhoades2024tutte},\cite{angarone2025superspace}). Of particular interest are the superspace coinvariant rings of type A and B. The first type is related to the action of $\mathfrak S_n$ on $\Omega_n$ given by $$\sigma(x_i)=x_{\sigma(i)},\quad \sigma(\theta_i)=\theta_{\sigma(i)}.$$Given this, the $\mathfrak S_n$-invariants of $\Omega_n$ with vanishing constant term generate the superspace coinvariant ideal (of type A), denoted $SI_n$, and the quotient $SR_n=\Omega_n/SI_n$ is called the superspace coinvariant ring of type A.

The second type corresponds to the action of $\mathfrak B_n$, the group of signed permutations. The elements of $\mathfrak B_n$ can be realized as permutations $\pi$ of the set $\{-n,\ldots, -1,1,\ldots, n\}$ with the property $\pi(-i)=-\pi(i)$ for every $i\in \{1,2,\ldots, n\}$. Given such a permutation $\pi$, we define its action on $\Omega_n$ by setting $$\pi(x_i)=x_{\pi(i)},\quad \pi(\theta_i)=\theta_{\pi(i)}$$as expected, with the added caveat that $x_{-i}$ and $\theta_{-i}$ are understood to denote $-x_i$ and $-\theta_i$ respectively. One may again define the coinvariant ideal $SI^B_n$ and $SR^B_n$ for this group action in similar fashion as before: we will revisit details of this construction in Section \ref{background}.

Sagan and Swanson conjectured \cite{sagan2024q} explicit monomial bases for $SR_n$ and $SR^B_n$, strengthening previous conjectures on their bigreaded Hilbert series (\cite{zabrocki2019module},\cite{swanson2021harmonic}) in terms of the so-called $q$-Stirling number and the type $B$ $q$-Stirling numbers respectively. Due to the failure of usual Gr\"obner theoretic straightening arguments, proofs of these conjectures have presented difficulties. Rhoades and Wilson (\cite{rhoades2024hilbert}) provided the first proof for the conjectured Hilbert series of $SR_n$ using sophisticated algebraic arguments. The proof for the monomial basis was found by Angarone, Commins, Karn, Murai, and Rhoades later (\cite{angarone2025superspace}), and used rather unexpected ideas from the theory of hyperplane arrangement, uncovering connections with the so-called Solomon-Terao algebras in the process.

The type B analogue is as follows. Define $q$-analogues of integers in the usual way: $$[k]_q=1+q+\cdots+q^{k-1}=\frac{q^{k}-1}{q-1},$$and given that, define the $q$-analogue of the double factorial as follows:$$[n]!!_q=[n]_q[n-2]_q[n-4]_q\cdots.$$The final term in the above product is either $[1]_q$ or $[2]_q$ depending on the parity of $n$. In \cite{sagan2024q}, Sagan and Swanson define the $B$-analogue of the $q$-Stirling numbers, denoted $\qstirB(n,k)$ by the following recurrence\begin{equation}
\qstirB(n,k)=\qstirB(n-1,k-1)+[2k+1]_q\cdot\qstirB(n-1,k)\end{equation}with the initial conditions 
\begin{equation}
\qstirB(0,k)=\begin{cases}
    1&k=0\\
    0&k\ne 0
\end{cases}
\end{equation}. 
\begin{conjecture}[{\cite[Conj. 1.19]{swanson2021harmonic}}]\label{conj} The bigraded Hilbert series for $SR^B_n$ is given by \begin{equation}
    \operatorname{Hilb}(SR^B_n;q,z)=\sum_{k=0}^n [2k]!!_q\cdot \operatorname{Stir}^B_q(n,k)\cdot z^{n-k}
\end{equation} where $q$ and $z$ tracks bosonic and fermionic degree, respectively.
    
\end{conjecture}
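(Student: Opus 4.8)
The plan is to pass from $SR^B_n$ to its associated \emph{superharmonic space}, bound that space from below by an explicit family of harmonics (which simultaneously yields the operator theorem), and bound it from above by a spanning set. First, by Solomon's theorem on the invariants of a Coxeter group acting on polynomial-valued differential forms, applied to $\mathfrak B_n$ with basic invariants $f_k=x_1^{2k}+\dots+x_n^{2k}$ ($1\le k\le n$), the ideal $SI^B_n$ is generated by $f_1,\dots,f_n$ together with their de Rham differentials $df_k=2k\sum_i x_i^{2k-1}\theta_i$. Equip $\Omega_n$ with the apolarity pairing in which $x_i,\theta_i$ are adjoint to $\partial_{x_i},\partial_{\theta_i}$; this pairing is nondegenerate on each bidegree and $SI^B_n$ is bihomogeneous, so $\operatorname{Hilb}(SR^B_n;q,z)=\operatorname{Hilb}(SH^B_n;q,z)$, and since $g\mapsto g(\partial)$ is an algebra homomorphism it is enough to annihilate the generators:
\[
SH^B_n=\Bigl\{\,h\in\Omega_n:\ \textstyle\sum_i\partial_{x_i}^{2k}\,h=0\ \text{ and }\ \sum_i\partial_{x_i}^{2k-1}\partial_{\theta_i}\,h=0,\quad 1\le k\le n\,\Bigr\}.
\]
Equivalently, since $f_1,\dots,f_n$ is a regular sequence in $\mathbb C[\mathbf x_n]$, $SR^B_n\cong\bigl(R^B_n\otimes\bigwedge[\theta_1,\dots,\theta_n]\bigr)/(\overline{df_1},\dots,\overline{df_n})$, where $R^B_n$ is the classical type $B$ coinvariant ring; the problem is thus to measure how far the odd elements $\overline{df_1},\dots,\overline{df_n}$ (of bidegrees $(1,1),(3,1),\dots,(2n-1,1)$) are from forming a regular sequence — a regular sequence would give the non-polynomial ``naive'' answer $[2n]!!_q(1+z)^n\big/\prod_{k=1}^n(1+zq^{2k-1})$, so the whole content is in the failure of regularity.

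For the lower bound I would study the subspace of $SH^B_n$ generated by the type $B$ Vandermonde $\Delta^B_n=\prod_i x_i\prod_{i<j}(x_i^2-x_j^2)$ under suitable operators. Introduce $\rho_d=\sum_i\theta_i\partial_{x_i}^d$ for odd $d\ge 1$; these commute with $\mathfrak B_n$, and a direct computation gives $[\rho_d,\sum_i\partial_{x_i}^{2k}]=0$ and $\{\rho_d,\sum_i\partial_{x_i}^{2k-1}\partial_{\theta_i}\}=\sum_i\partial_{x_i}^{2k-1+d}$ with $2k-1+d$ even, so $SH^B_n$ is stable under every $\rho_d$ and every $\partial_{x_i}$, and contains $\Delta^B_n$ (the classical $B_n$-harmonic of top degree, killed by all $\sum_i\partial_{x_i}^{2k}$ and trivially by the mixed operators). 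The operator theorem is the assertion that $SH^B_n$ is exactly the smallest subspace with these closure properties containing $\Delta^B_n$; to prove the Hilbert series bound I would exhibit inside it a linearly independent family of $\partial_x$-derivatives of elements $\theta_{J(\pi)}\cdot P_\pi$, indexed by type $B$ set partitions $\pi$ of $[n]$ into $k$ blocks, where $J(\pi)$ (of size $n-k$) is the set of non-representatives and $P_\pi$ is a product of signed Vandermonde factors $\prod(x_i^2-x_j^2)$ over blocks and $\prod x_i$ over the zero block. A leading-monomial argument gives the independence; grouping the $\partial_x$-derivatives of $P_\pi$ into a copy of the $B_k$-coinvariant ring (Hilbert series $[2k]!!_q$, palindromic) and enumerating the $\pi$ via the defining recurrence $\qstirB(n,k)=\qstirB(n-1,k-1)+[2k+1]_q\,\qstirB(n-1,k)$ — according as $n$ opens a new block or occupies one of the $2k+1$ slots of an existing one — yields $\operatorname{Hilb}(SR^B_n;q,z)=\operatorname{Hilb}(SH^B_n;q,z)\succeq\sum_{k=0}^n[2k]!!_q\,\qstirB(n,k)\,z^{n-k}$ coefficientwise.

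The matching upper bound is where I expect the real difficulty, and where the hyperplane arrangement input enters. I would aim to produce a spanning set of $SR^B_n$ of bigraded size at most the right-hand side by induction on $n$ mirroring the recurrence above: using the relations $f_k=0$ and $df_k=0$ one rewrites an arbitrary monomial so that the exponent of $x_n$ is at most $2b+1$, or at most $2b$ when $\theta_n$ is present, where $b$ is the number of blocks occupied by $x_1,\dots,x_{n-1}$, and then strips $x_n$ and $\theta_n$ to land in a shifted copy of $SR^B_{n-1}$. The obstruction is that $SI^B_n$ is not generated by its leading terms, so ordinary straightening fails to certify these reductions; following the type $A$ treatment of \cite{angarone2025superspace}, I would instead identify $SR^B_n$ — or its initial form under an appropriate term order — with a Solomon--Terao-type algebra built from the type $B_n$ Coxeter arrangement, whose Hilbert series can be read off from the arrangement's combinatorics (its characteristic polynomial $\prod_{k=1}^n(t-(2k-1))$ recording the exponents $1,3,\dots,2n-1$) and checked to equal $\sum_k[2k]!!_q\,\qstirB(n,k)z^{n-k}$. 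Once the two bounds coincide, equality holds; as consequences the explicit harmonics of the second paragraph exhaust $SH^B_n$, proving the operator theorem, and the spanning set of the third is forced to be a basis, recovering the Sagan--Swanson description of \cite{sagan2024q}.
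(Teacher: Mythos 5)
Your high-level framing matches the paper: pass to the superharmonic space, prove the operator theorem as a by-product, bound from below by harmonics generated from $\delta^B_n$ and from above by a spanning set. The closure argument for $SH^B_n$ under $\rho_d$ and $\partial_{x_i}$ via the anticommutator computation is correct and is essentially how one establishes $SH'^B_n\subseteq SH^B_n$. But the two substantive steps are missing or misplaced.

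For the lower bound, ``a leading-monomial argument gives the independence'' is exactly the gap the paper has to fill, and it is not elementary. The paper's mechanism is a family of $\odot$-operators $\mathfrak D_J$, built from determinants of the matrix $\mathcal H$ of complete homogeneous polynomials in squared variables, together with a Gale-triangularity statement: $\mathfrak D_J(\delta^B_n)$ has $\theta_K$-support only for $K\le_{\text{Gale}}J$, with the $\theta_J$-coefficient equal to $\pm f_J\odot\delta^B_n$ where $f_J=\prod_{j\in J}x_j\prod_{j<i}(x_j^2-x_i^2)$. Pairing these against $g_J\cdot\theta_J$ collapses the triangular system and reduces linear independence in $SH'^B_n$ to linear independence in $\mathbb C[\mathbf x_n]/(I^B_n:f_J)$. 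That reduction then requires the identification $(I^B_n:f_J)=(p_{J,1},\ldots,p_{J,n})$ with an explicit regular sequence $p_{J,i}$, proved via Poincar\'e duality and a degree count. None of this is present in your sketch, and your indexing by type $B$ set partitions and ``signed Vandermonde factors $P_\pi$'' would still need a concrete pairing to certify independence — the Gale triangularity and the colon-ideal characterization are precisely what make that work.

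You also have the difficulty inverted. The upper bound in the paper is the straightforward direction: one exhibits explicit elements $q_{J,i}\in SI^B_n$ (differentials of complete homogeneous polynomials in squared variables) whose Gale-leading terms are $p_{J,i}\cdot\theta_J$, and a spanning-set argument plus the combinatorial identity $\sum_{|J|=n-k}\prod_i[\stB_i(J)+1]_q=[2k]!!_q\,\qstirB(n,k)$ gives the bound — no hyperplane arrangements and no straightening required. The Solomon--Terao/hyperplane-arrangement machinery appears in the paper only in Section~\ref{hyperplanebasissection}, after the Hilbert series is already settled, to produce an explicit factored basis of $SR^B_n$; it is not used to certify the Hilbert series. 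So pointing the arrangement theory at the upper bound is not where it is needed, and the passage to an ``initial form'' Solomon--Terao algebra that you invoke is precisely the kind of Gr\"obner-type straightening the paper explicitly notes does not work here.
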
Here $\operatorname{Stir}^B_q(n,k)$ denotes the type B analog of the so-called $q$-Stirling numbers that we will define in a later section. 

In this paper, we prove Conjecture \ref{conj} by extending the techniques in Rhoades-Wilson (\cite{rhoades2024hilbert}). Our main tool is the transfer principle used in \cite{rhoades2024hilbert}. This proceeds by reducing the problem of finding a basis for $SR^B_n$ in $\Omega_n$ to the problem of finding bases for a family of quotient rings in $\mathbb{C}[\mathbf x_n]$ cut out of certain colon ideals. This allows us to simplify a supercommutative algebra problem into a collection of commutative algebra problems, whence we are free to use classical techniques from commutative algebra. 

With this goal in mind, we start by defining a sequence of polynomials $$(p_{J,1},\ldots, p_{J,n})$$ for each $J\subseteq [n]$, which will yield Gale leading terms of carefully chosen elements of $SI^B_n$ (Lemma \ref{leadingterm}). We use these to come up with a spanning set for $SR^B_n$ (Lemma \ref{triangularitylemma}), which yields an upper bound for $\operatorname{Hilb}(SR^B_n;q,z)$.

To prove this is also a lower bound, we construct a family of operators $\mathfrak D_J$ with certain triangularity properties with respect to the Gale order (Lemma \ref{GaletriangularityLemma}). Similar to \cite[Lem. 5.2]{rhoades2024hilbert}, these considerations lead to a general method for producing bases of $SR^B_n$ using bases of certain colon ideals. Finally, we connect the polynomials $p_{J,i}$ to these colon ideals to prove that this method produces the appropriate lower bound for $\operatorname{Hilb}(SR^B_n;q,z)$.

As a consequence of these arguments, we will prove the `type B operator theorem' conjectured in \cite{swanson2021harmonic}. This is motivated by Haiman's operator theorem from the so-called ``diagonal coinvariants" (conjectured in \cite{haiman1994conjectures}, proved in \cite{haiman2002vanishing}). The key idea is to use an inner product to define the orthogonal complement of the coinvariant ideal, known as the \textit{harmonics} or \textit{inverse systems}, which is isomorphic to the coinvariant ring. This has the following advantage: the space of harmonics consists of actual polynomials rather than cosets of polynomials, which makes them easier to tackle. Swanson and Wallach conjectured that a version of this operator theorem holds for all complex reflection groups of the form $G(m,1,n)$ (the group of $n\times n$ matrices with exactly one non-zero entry in each row or column which is a $m^{\text{th}}$ root of unity, see \cite{shephard1954finite} for details). Rhoades and Wilson proved the case $m=1$ in which case $G$ equals the symmetric group $\mathfrak S_n$. In this paper (Theorem \ref{operatortheorem}), we prove the type B analog of this result, which corresponds to the case $m=2$ when $G$ is the group $\mathfrak B_n$. 

In a later section, we demonstrate an explicit basis for $SR^B_n$ (Corollary \ref{hyperplanebasiscor}). While this is different from the conjectured monomial basis \cite{sagan2024q}, this has the property that each basis element is a product of factors of the form $x_i$, $x_i\pm x_j$ and $\theta_i$. This connects the colon ideals considered in the earlier parts to the Solomon-Terao algebras of certain hyperplane arrangements.

The remainder of this paper is organized as follows. In Section \ref{background}, we define the main objects of study and review some necessary background material. In Section \ref{upperboundsection} we prove that the coefficients of the desired Hilbert series $\operatorname{Hilb}(SR^B_n;q,z)$ cannot exceed those of the conjectured expression by demonstrating a specific spanning set for $SR^B_n$. In Section \ref{acharacterizationofcolonideals} we obtain a characterization for some relevant colon ideals necessary for constructing bases of $SR^B_n$. In Section \ref{operatortheoremlowerbound}, we prove the operator theorem for type B and establish the claimed Hilbert series. Section \ref{hyperplanebasissection} applies the theory of hyperplane arrangements to derive an explicit basis for $SR^B_n$. Finally, in Section \ref{conclusion} we outline some ideas for further work in this area and questions that remain unresolved.

\section{Background\label{background}}
\subsection{Superspace and \texorpdfstring{$\mathfrak B_n$}{Bn}-action}
As indicated in the introduction, we define the \textit{superspace ring of rank} $n$ as $$\Omega_n=\mathbb C[x_1,\ldots, x_n]\otimes \wedge\{\theta_1,\ldots, \theta_n\}.$$ The variables $x_1,\ldots, x_n$ will often be called \textit{bosonic}, and monomials comprised of these will be called called \textit{bosonic monomials}. Similarly, we call the variables $\theta_1,\ldots, \theta_n$ \textit{fermionic}, and monomials comprised of these are called \textit{fermionic monomials}. Due to their anticommutativity, non-zero ferminonic monomials can always be expressed in the form (up to sign) $$\theta_{j_1}\theta_{j_2}\cdots\theta_{j_k}$$ for $1\le j_1<j_2<\cdots<j_k\le n$. For a set $J=\{j_1<j_2<\cdots<j_k\}$, we will abbreviate the monomial $\theta_{j_1}\theta_{j_2}\cdots\theta_{j_k}$ as $\theta_J$ for convenience.

For certain triangularity arguments, it will be useful to have a partial order on the subsets $J\subseteq[n]$ (and therefore on the monomials $\theta_J$. We will use the \textit{Gale order}. This is defined as follows: if $\{a_1<\ldots<a_r\}$ and $\{b_1<\cdots<b_r\}$ are subsets of $[n]$ of equal size, we say $$\{a_1<\ldots<a_r\}\le_{\text{Gale}}\{b_1<\cdots<b_r\}$$ if we have the inequalities$$a_1\le b_1,\;\;a_2\le b_2,\;\;\ldots,\;\; a_r\le b_r.$$

Let us define a few operators on $\Omega_n$ that will be useful later. The partial derivative operators $\partial_i$ for $1\le i\le n$ act as usual on $\mathbb C[\mathbf x_n]$. This action can be extended to $\Omega_n$ by letting $\partial_i$ treat the fermionic variables as constant. 

An analogous operator $\partial^{\theta}_i$ can be defined that acts on the second factor in the tensor product $\mathbb C[x_1,\ldots, x_n]\otimes \wedge\{\theta_1,\ldots, \theta_n\}$ and treats the bosonic variables as constant. This acts on the fermionic monomials as 

$$\partial^\theta_i:\theta_{j_1}\theta_{j_2}\cdots\theta_{j_r}\mapsto\begin{cases}
    (-1)^{s-1}\theta_{j_1}\cdots\widehat{\theta_{j_s}}\cdots \theta_{j_r} &\text{if }j_s=i\\
    0&\text{otherwise.}
\end{cases}$$The Euler operator $d$ acts on $\Omega_n$ via $$d(f)=\sum_{i=1}^n \partial_i(f)\cdot \theta_i.$$ For any integer $j\ge 1$, one can define higher versions of this operator  $d_j:\Omega_n\to\Omega_n$ as $$d_j(f)=\sum_{i=1}^n\partial_i^j(f)\cdot\theta_i.$$Given any set $J=\{j_1<j_2<\cdots<j_k\}$ of positive integers, we define $$d_J:=d_{j_1}d_{j_2}\cdots d_{j_k}$$which will be a useful shorthand for products of $d_j$'s.

We recall that the group $\mathfrak B_n$ consists of permutations $\pi$ of the set $\{-n,\ldots, -1,1,\ldots, n\}$ that satisfy $\pi(-i)=-\pi(i)$ for all $i\in [n]$. This acts on $\Omega_n$ as follows: $$\pi\cdot x_i=x_{\pi(i)}\qquad \pi\cdot \theta_i=\theta_{\pi(i)}$$where we let $x_{-i}=-x_i$ and $\theta_{-i}=-\theta_i$ for $i\in [n]$. It is easy to see that $p_{2i}$ for $1\le i\le n$ is invariant under this action, where $p_k$ denotes the usual power sum symmetric polynomial: $$p_k=\sum_{i=1}^nx_i^k.$$In addition, the $\mathfrak B_n$-action commutes with the action of $d$, which implies for any $\mathfrak B_n$-invariant $f$, $df$ is a $\mathfrak B_n$-invariant as well.

Let $SI^B_n$ denote the ideal of $\Omega_n$ generated by $\mathfrak B_n$ invariants with vanishing constant term. The above discussion implies $SI^B_n$ contains the elements $\{p_2,\ldots, p_{2n},dp_2,\ldots, dp_{2n}\}$; it is known \cite{solomon1963invariants} that these in fact generate all of $SI^B_n$. This leads us to define our main object of study:

\begin{definition}
    The \textit{superspace coinvariant ring of type B} is the quotient ring $$SR^B_n=\Omega_n/SI^B_n.$$
\end{definition}

It will be useful to define (polynomial) \textit{coinvariant ideal of type B} as well: this is the ideal in $\mathbb C[\mathbf{x}_n]$ generated by all $\mathfrak B_n$-invariants with vanishing constant term, and will be denoted $I^B_n$. We note that $I^B_n$ is generated by $\{p_2\ldots, p_{2n}\}$ as a $\mathbb C[\mathbf x_n]$-ideal.
\subsection{\texorpdfstring{$B$}{B}-analogues of \texorpdfstring{$q$}{q}-Stirling numbers}
As noted in the introduction, we aim to prove that the Hilbert series of $SR^B_n$ is given by the expression in Conjecture \ref{conj}. Recall that $\qstirB(n,k)$ refers to the $B$-analogue of the $q$-Stirling numbers defined by the recurrence:
\begin{equation}
\qstirB(n,k)=\qstirB(n-1,k-1)+[2k+1]_q\cdot\qstirB(n-1,k)\end{equation}with the initial conditions 
\begin{equation}
\qstirB(0,k)=[k=0].
\end{equation}
Here $[\cdot ]$ is the Iverson bracket: it evaluates to $1$ if the statement inside is true and 0 otherwise.

We will now state a combinatorial identity involving these numbers that will be useful later. For any $J\subseteq[n]$, define the \textit{$J$-staircase of type $B$} to be the sequence $(\stB_1(J),\ldots, \stB_n(J))$ where $\stB_i(J)$ for $i\in[n]$ is defined by 
\begin{equation}
   \stB_1(J)=[1\not\in J]\label{eqstB1}
\end{equation}
and 
\begin{equation}
    \stB_i(J)=\stB_{i-1}(J)+[i\not\in J]+[i-1\not\in J]\label{eqstB2}
\end{equation} for $1<i\le n$. 

The process of computing this sequence can be described as follows: the first number in the sequence denotes whether $1$ is missing in $J$. Next, for each $i$, we count how many of the numbers $\{i-1,i\}$ is missing from $J$, and we increment the previous number in the sequence by exactly that amount to get the $i^{\text{th}}$ entry. As an example, let $n=6$ and $J=\{2,5,6\}$. Then the $J$-staircase of type B is $(1,2,3,5,6,6)$. There is an increase of $2$ going from the third entry to the fourth, because exactly $2$ of the numbers $\{3,4\}$ is missing from $J$.

Summing up the equations \ref{eqstB2} for $i=2,\ldots, k$ together with \ref{eqstB1} we get 
\begin{align}
    \stB_k(J)&=2([1\not\in J]+[2\not\in J]+\cdots+[k-1\not\in J])+[k\not\in J] \nonumber\\
    &=2(|\{1,2,\ldots, k-1\}\setminus J|)+[k\not\in J]\label{eqstB3}\\
    &=2(n-|J\cup\{k,\ldots, n\}|)+[k\not\in J].\label{eqstB4}
\end{align}
In particular \ref{eqstB3} shows that $\stB_k(J)$ depends only on $k$ and set $J$, not on the ambient set $[n]$.
\begin{lemma}\label{combidentity}
For any positive integer $n$ and any $k\in\{0,\ldots, n\}$, the following identity holds:
    $$\sum_{\substack{J\subseteq [n]\\|J|=n-k}}\left(\prod_{j=1}^n[\stB_i(J)+1]_q\right)=[2k]!!_q\cdot \operatorname{Stir}^B_q(n,k).$$
\end{lemma}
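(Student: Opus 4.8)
The plan is to prove the identity by induction on $n$ after a change of variables that makes the left-hand side transparent. Given $J\subseteq[n]$ with $|J|=n-k$, set $b_i=|[i]\setminus J|$ for $0\le i\le n$, so that $b_0=0$, $b_n=k$, and each increment $b_i-b_{i-1}=[i\notin J]$ lies in $\{0,1\}$. Comparing with the defining relations \eqref{eqstB1}--\eqref{eqstB2}, an easy induction on $i$ shows that
\[
\stB_i(J)=b_{i-1}+b_i\qquad(1\le i\le n).
\]
Consequently, writing $F_n(k)$ for the left-hand side of the lemma (that is, $\sum_{|J|=n-k}\prod_{i=1}^n[\stB_i(J)+1]_q$), the sum over subsets $J$ becomes a sum over monotone integer sequences:
\[
F_n(k)=\sum_{\substack{0=b_0\le b_1\le\cdots\le b_n=k\\ b_i-b_{i-1}\in\{0,1\}}}\ \prod_{i=1}^n\bigl[b_{i-1}+b_i+1\bigr]_q .
\]

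I would then extract a recurrence for $F_n(k)$ by conditioning on the last step $b_{n-1}$. If $b_{n-1}=k$, the $i=n$ factor equals $[2k+1]_q$ and deleting $b_n$ leaves an arbitrary admissible sequence of length $n-1$ counted by $F_{n-1}(k)$; if $b_{n-1}=k-1$, the $i=n$ factor equals $[2k]_q$ and what remains is counted by $F_{n-1}(k-1)$. This gives
\[
F_n(k)=[2k+1]_q\,F_{n-1}(k)+[2k]_q\,F_{n-1}(k-1),
\]
with base case $F_0(k)=[k=0]$ (the empty product is $1$, and the only sequence of length $0$ forces $k=0$); here we adopt the convention $F_m(j)=0$ for $j<0$ or $j>m$, which the displayed formula already respects.

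Finally I would check that $G_n(k):=[2k]!!_q\cdot\qstirB(n,k)$ obeys the same recurrence and the same base case. Using $\qstirB(n,k)=\qstirB(n-1,k-1)+[2k+1]_q\,\qstirB(n-1,k)$ together with the factorization $[2k]!!_q=[2k]_q\cdot[2k-2]!!_q$, one computes
\[
G_n(k)=[2k]_q\,G_{n-1}(k-1)+[2k+1]_q\,G_{n-1}(k),
\]
while $G_0(k)=[0]!!_q\cdot[k=0]=[k=0]$. Since $F_n$ and $G_n$ satisfy the same two-term recurrence in $n$ and agree at $n=0$, induction on $n$ yields $F_n(k)=G_n(k)$ for all $n\ge 0$ and all $k$, which is exactly the assertion of the lemma.

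The only genuine idea here is the substitution $b_i=|[i]\setminus J|$, which symmetrizes the staircase weights into $b_{i-1}+b_i$ and exposes the recursive structure; I expect that to be the single nontrivial step. Everything afterward is routine bookkeeping: matching the two recurrences and handling the degenerate cases $k=0$ and $k=n$ (where the sequence $(b_i)$ is forced), both of which follow immediately from the displayed formulas.
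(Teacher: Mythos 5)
Your proof is correct and takes essentially the same approach as the paper: both arguments show the two sides satisfy the recurrence $P(n,k)=[2k]_q\,P(n-1,k-1)+[2k+1]_q\,P(n-1,k)$ by splitting the sum according to whether $n\in J$ (equivalently, whether $b_{n-1}$ equals $k$ or $k-1$), then invoke induction. Your substitution $b_i=|[i]\setminus J|$, giving $\stB_i(J)=b_{i-1}+b_i$, is a pleasantly symmetric repackaging of the paper's equation \eqref{eqstB3}, and your base case at $n=0$ is a small simplification over the paper's verification at $n=1$, but neither changes the underlying argument.
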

This identity follows from the arguments in \cite{sagan2024q}; we provide a more direct proof below.
\begin{proof}
    One may use the definition of $\qstirB(n,k)$ to deduce that the right--hand side satisfies the recurrence $$[2k]!!_q\qstirB(n,k)=[2k]_q\cdot[2k-2]!!_q\qstirB(n-1,k-1)+[2k+1]_q\cdot[2k]!!_q\qstirB(n-1,k)$$for $n>1$, with the initial conditions $$[2k]!!_q\qstirB(1,k)=\begin{cases}
        [1]_q&k=0\\
        [2]_q&k=1\\
        0&\text{otherwise}.
    \end{cases}$$

    It suffices to show that the left-hand side also satisfies the same recurrence with the same initial conditions. Let us denote the left-hand side as $P(n,k)$.

Consider the case $n=1$. If $k=0$, $P(1,k)$ equals $$[\stB_1(\{1\})+1]_q=[1]_q.$$If $k=1$, we have $$P(1,0)=[\stB_1(\emptyset)+1]_q=[2]_q.$$Finally, if $k\not\in\{0,1\}$, $P(1,k)$ is the empty sum and hence zero. These agree with the above initial conditions for $[2k]!!_q\qstirB(n,k)$.

For $n>1$, not that the sum in $P(n,k)$ runs over all $J\subseteq[n]$ with $|J|=n-k$, which can be divided into two classes:
\begin{itemize}
    \item $n\not\in J$, in which case $J=J'$ for some $j'\subseteq[n-1]$ with $|J'|=(n-1)-(k-1)$. Per equation \ref{eqstB3}, $\stB_k(J)=\stB_k(J')$ for all $k\in[n-1]$, so the corresponding summand for $J$ equals the same for $J'$ times $[\stB_n(J)+1]_q=[2k]_q,$ contributing $[2k]_q\cdot [2k-2]!!_q P(n-1,k-1)$ to the total.
    \item $n\in J$, in which case $J=J'\cup\{n\}$ for any $J'\subseteq[n-1]$ with $J'=(n-1)-k.$ By the same logic as above, corresponding product equals the same for $J'$ times $[\stB_n(J)+1]_q=[2k+1]_q,$ contributing $[2k+1]_q\cdot [2k]!!_q P(n-1,k)$ to the total.
\end{itemize}
Thus we have $$P(n,k)=[2k]_q\cdot P(n-1,k-1)+[2k+1]_q\cdot P(n-1,k),$$which agrees with the recurrence for $[2k]!!_q\qstirB(n,k).$
\end{proof}
% For later use, we will also record a result about the sum of $\stB_i(J)$'s.

% \begin{lemma}
%     For any $J\subseteq [n]$, $$\sum_{i=1}^n\stB_i(J)=$$
% \end{lemma}
% \begin{proof}
%     Using equation \ref{eqstB3},
%     $$\sum_{i=1}^n\stB_i(J)=2\sum_{i=1}^n|\{1,\ldots, i-1\}\setminus J|+\sum_{i=1}^n[i\not\in J].$$Here in the first sum $\{1,\ldots, i-1\}$ for $i=1$ refers to the empty set.

%     The second sum above clearly counts the number os $i$'s in $[n]$ that not in $J$, and thus equals $n-|J|$.
% \end{proof}
\subsection{Harmonics and superharmonics}\label{harmonicsection}

One of the most important tools in investigating coinvariant rings is the theory of harmonics, also known as Macaulay inverse systems. In this section, we briefly describe the main ideas behind this tool.

Given any $f=f(x_1,\ldots, x_n),g\in\mathbb C[\mathbf x_n]$, we can define $$f\odot g=f(\partial_1,\ldots, \partial_n)(g).$$This is well defined since the operators $\partial_i$ satisfy the defining relations that $x_1,\ldots, x_n$ satisfy: $\partial_i\partial_j=\partial_j\partial_i$. Using this, one can check that following defines an inner product: $$\langle -,-\rangle:\mathbb C[\mathbf x_n]\times\mathbb C[\mathbf x_n]\to\mathbb C\qquad \langle f,g\rangle=\text{constant term of }f\odot \overline{g},$$where $\overline{g}$ denotes the complex conjugate of $g$.

Given any homogeneous ideal $I$ in $\mathbb C[\mathbf x_n]$, this inner product enables us to form the orthogonal complement $I^\perp$, called the \textit{harmonic space} of $I$. We have the equality $$I^\perp=\{g\in\mathbb C[\mathbf x_n]:f\odot g=0\text{ for all }f\in I\}.$$ Further, we have the decomposition $\mathbb C[\mathbf x_n]=I\oplus I^\perp$ which implies the isomorphism $\mathbb C[\mathbf x_n]/I\cong I^\perp$ of graded $\mathbb C$-vector spaces .

For the specific case when $I$ is $I^B_n=(p_{2},\ldots, p_{2n})$, the ideal generated by $\mathfrak B_n$-invariants with vanishing constant term, $I^\perp$ has a simple description. Let $\delta^B_n$ denote the Vandermondian \cite{swanson2021harmonic} defined as $$\delta^B_n=\prod_{i=1}^nx_n\prod_{1\le i<j\le n}(x_i^2-x_j^2).$$Then we have the equality \cite{steinberg1964differential}$$(I^B_n)^\perp=\mathbb C[\mathbf x_n]\odot\delta^B_n.$$

We will denote $(I^B_n)^\perp$, the harmonic space attached to $I_n^B$, as $H^B_n$: this is isomorphic to the coinvariant ring $\mathbb C[\mathbf x_n]/I^B_n$ as graded $\mathfrak B_n$-modules. The following equivalence holds for any $f\in\mathbb C[\mathbf x_n]:$ $$f\in I_n\iff f\odot\delta^B_n=0.$$

A very similar set of objects can be defined for the superspace $\Omega_n$ as well. We begin by observing that the operators $\partial_i$ and $\partial^\theta_i$ defined above satisfy the defining relation of $\Omega_n$: $$\partial_i\partial_j=\partial_j\partial_i,\quad \partial^\theta_i\partial^\theta_j=-\partial^\theta_j\partial^\theta_i,\quad\partial_i\partial^\theta_j=\partial^\theta_j\partial_i.$$

This means any superspace element $f$ can be associated to an well-defined operator comprised of these by replacing $x_i\mapsto \partial_i$ and $\theta_i\mapsto\partial^\theta_i$. We call this new operator $\partial f$. Using this, we can define an action of $\Omega_n$ on itself: for $f,g\in\Omega_n$, we can define $$f\odot g=(\partial f)(g).$$

One can use this to define an inner product on $\Omega_n$ as before: we simply need to extend the conjugation operation to the fermionic variables. This can be done by reversing the monomials: $\overline{\theta_{j_1}\cdots\theta_{j_r}}=\theta_{j_r}\cdots\theta_{j_1}$. It can be checked that the pairing $$\langle -,-\rangle:\Omega_n\times\Omega_n\to\mathbb C\qquad \langle f,g\rangle=\text{constant term of }f\odot\overline{g}$$again defines an inner product. We can again define the orthogonal complement $$SH^B_n=(SI^B_n)^\perp=\{g\in\Omega_n:f\odot g=0\text{ for all }f\in I^B_n\}.$$This is again isomorphic to $SR^B_n=\Omega_n/SI^B_n$ as graded $\mathfrak B_n$-modules.

A related object is $SH'^B_n$, which can be defined as $$SH'^B_n=\mathbb C[\mathbf x_n]\odot\text{Span}_{\mathbb C}\{d_{2i_1-1}d_{2i_2-1}\cdots d_{2i_k-1}\delta^B_n:1\le i_1<i_2<\cdots<i_k\le n\}.$$In other words, $SH'^B_n$ is the smallest linear subspace of $\Omega_n$ that contains the Vandermondian $\delta^B_n$, is closed under the actions of $\partial_1,\ldots, \partial_n$ as well as the action of $d_{2i-1}$ for $1\le i\le n$. In \cite{swanson2021harmonic} Swanson and Wallach show that $SH'^B_n\subseteq SH^B_n$. We will show in Theorem \ref{operatortheorem} that in fact equality holds: $SH'^B_n=SH^B_n$, providing an explicit characterization of the superharmonic space.
\subsection{Hyperplane arrangements}
The theory of hyperplane arrangements will be crucial for us to derive an explicit basis for $SR^B_n$. Let us briefly review the key definitions and results we will need; some standard references are \cite{stanley2007introduction} and \cite{orlik2013arrangements}.

Given a vector space $V$, A \textit{hyperplane} in $V$ is a linear subspace of $V$ of codimension $1$. Any such hyperplane can be expressed as the zero set of a non-zero linear form $\alpha$: we will call this hyperplane $H_\alpha$. An \textit{hyperplane arrangement} is a finite set of hyperplanes $\mathcal A$.

Any hyperplane arrangement has an associated \textit{derivation module}. A \textit{derivation} of $\mathbb{C}[\mathbf x_n]$ is a $\mathbb C$-linear map $\delta:\mathbb{C}[\mathbf x_n]\to \mathbb{C}[\mathbf x_n]$ satisfying the Leibniz rule: $\delta(fg)=f\cdot\delta(g)+g\cdot\delta(f)$ for all $f,g\in \mathbb{C}[\mathbf x_n]$. 

The set $\der(\mathbb{C}[\mathbf x_n])$ of all derivations has a natural $\mathbb{C}[\mathbf x_n]$-module structure, and is generated by the partial derivative operators $\partial_1,\partial_2,\ldots,\partial_n$ as a free $\mathbb{C}[\mathbf x_n]$-module:$$\der(\mathbb{C}[\mathbf x_n])=\bigoplus_{1\le i\le n}\mathbb{C}[\mathbf x_n]\cdot\partial_i.$$These can be thought of as polynomial-valued vector fields on $\mathbb C^n$. A derivation is homogeneous of degree $d$ if it is of the form $\theta=f_1\partial_1+f_2\partial_2+\cdots+f_n\partial_n$, and each $f_i$ is a homogeneous polynomial in $\mathbb{C}[\mathbf x_n]$ with degree $d$.

Given a hyperplane arrangement $\mathcal A$ in $\mathbb C^n$, the set of derivations that represent vector fields parallel to all the hyperplanes in $\mathcal A$ form the derivation module $\der(\mathcal A)$. More concretely, $$\der(\mathcal A)=\{\delta\in\der(\mathbb{C}[\mathbf x_n]):\alpha\mid\delta(\alpha)\text{ for all }H_\alpha\in \mathcal A\}.$$It follows from this definition that for any $\delta\in \der(\mathcal A)$ and any $f\in\mathbb C[\mathbf x_n]$, we have $f\cdot\delta\in \mathbb C[\mathbf x_n]$; in other words, $\der(\mathcal A)$ is a $\mathbb C[\mathbf x_n]$-submodule of $\der(\mathbb C[\mathbf x_n])$.

The product of all the linear forms that define the hyperplanes in $\mathcal A$ is called the \textit{defining polynomial} of $\mathcal A$, denoted $Q(\mathcal A)$: $$Q(\mathcal A)=\prod_{H_\alpha\in \mathcal A}\alpha.$$The above condition can also be rephrased as $Q(\mathcal A)\mid \theta(Q(\mathcal A))$.

An arrangement $\mathcal A$ is called \textit{free} if $\der(\mathcal A)$ is a free $\mathbb{C}[\mathbf x_n]$-module. If this is the case, one can always find a set of homogeneous generators $\{\rho_1,\ldots, \rho_n\}$ that generate $\der(\mathcal A)$ as an $\mathbb{C}[\mathbf x_n]$-module. The degrees of the homogeneous generators are uniquely determined by the arrangement (up to ordering), and the multiset of these degrees is called the \textit{exponents} of $\mathcal A$ (denoted $\exp(\mathcal A)$).

Given a set of derivations of $\mathcal A$, the following criterion is often useful to prove $\mathcal A$ is free:

\begin{theorem}[{\cite[Theorem 4.23]{orlik2013arrangements}}]\label{saito}
    Suppose $\mathcal A$ is an arrangement in $\mathbb C^n$ and $\rho_1,\ldots,\rho_n$ are homogeneous elements of $\der(\mathbb{C}[\mathbf x_n])$ satisfying the following properties:
    \begin{enumerate}
        \item[(1)]  $\rho_1,\ldots,\rho_n\in\der(\mathcal A)$;
        \item[(2)] $\deg \rho_1+\cdots+\deg\rho_n=\left|\mathcal A\right|$;
        \item[(3)] $\rho_1,\ldots, \rho_n$ are linearly independent over $\mathbb{C}[\mathbf x_n]$.
    \end{enumerate}
    Then $\{\rho_1,\ldots, \rho_n\}$ is a free $\mathbb{C}[\mathbf x_n]$-basis of $\der(\mathcal A)$. In particular, $\exp(\mathcal A)=(\deg\rho_1,\ldots, \deg \rho_n)$
\end{theorem}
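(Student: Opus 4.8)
The plan is to reduce the theorem to a single determinantal identity. Writing each derivation as $\rho_i=\sum_{j=1}^n a_{ij}\partial_j$ with $a_{ij}\in\mathbb C[\mathbf x_n]$ homogeneous of degree $\deg\rho_i$, I would form the coefficient matrix $M=(a_{ij})_{1\le i,j\le n}$ and record that hypothesis (3) is equivalent to $\det M\ne 0$, since linear independence of the $\rho_i$ over $\mathbb C[\mathbf x_n]$ and over the fraction field $\mathbb C(\mathbf x_n)$ coincide (clear denominators). The first key step is to show that $\det M=c\cdot Q(\mathcal A)$ for some nonzero constant $c$, where $Q(\mathcal A)=\prod_{H_\alpha\in\mathcal A}\alpha$. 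For the divisibility $Q(\mathcal A)\mid\det M$: fix a hyperplane $H_\alpha\in\mathcal A$ and apply a linear change of coordinates so that $\alpha$ becomes a coordinate function; then hypothesis (1), that is, $\alpha\mid\rho_i(\alpha)$ for all $i$, says precisely that every entry of the column of $M$ corresponding to that coordinate is divisible by $\alpha$, so $\alpha\mid\det M$. As distinct hyperplanes are cut out by pairwise non-associate irreducibles of the UFD $\mathbb C[\mathbf x_n]$, their product $Q(\mathcal A)$ divides $\det M$. On the other hand $\det M$ is homogeneous of degree $\sum_i\deg\rho_i$, which by hypothesis (2) equals $|\mathcal A|=\deg Q(\mathcal A)$; hence the quotient $\det M/Q(\mathcal A)$ is a constant $c$, and $c\ne 0$ by hypothesis (3).

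The second step is to deduce that $\rho_1,\ldots,\rho_n$ generate $\der(\mathcal A)$ as a $\mathbb C[\mathbf x_n]$-module. Given $\delta=\sum_j b_j\partial_j\in\der(\mathcal A)$, writing $\delta=\sum_i f_i\rho_i$ amounts to solving the linear system $\sum_i a_{ij}f_i=b_j$ for $1\le j\le n$, which over $\mathbb C(\mathbf x_n)$ has the Cramer solution $f_i=\det M_i/\det M$, where $M_i$ is $M$ with its $i$-th row replaced by $(b_1,\ldots,b_n)$. Running the same coordinate-change argument—now using $\rho_j\in\der(\mathcal A)$ for the untouched rows and $\delta\in\der(\mathcal A)$ for the replaced one—gives $\alpha\mid\det M_i$ for every $H_\alpha\in\mathcal A$, hence $Q(\mathcal A)\mid\det M_i$. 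Since $\det M=c\cdot Q(\mathcal A)$ with $c\ne 0$, we conclude $f_i=\det M_i/\det M\in\mathbb C[\mathbf x_n]$, so $\delta$ lies in the $\mathbb C[\mathbf x_n]$-span of $\rho_1,\ldots,\rho_n$. Combined with the linear independence in hypothesis (3), this exhibits $\{\rho_1,\ldots,\rho_n\}$ as a free $\mathbb C[\mathbf x_n]$-basis of $\der(\mathcal A)$, and the claim $\exp(\mathcal A)=(\deg\rho_1,\ldots,\deg\rho_n)$ is then immediate from the definition of the exponents.

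I expect the main obstacle to be the identification $\det M=c\cdot Q(\mathcal A)$, and within it the bookkeeping around the linear changes of coordinates: one must check that replacing $\partial_1,\ldots,\partial_n$ by the coframe associated to new linear coordinates multiplies $M$ (and each $M_i$) on one side by an invertible constant matrix, so that all the determinants in sight change only by nonzero scalars and the divisibility conclusions are coordinate-independent. Everything afterwards—the Cramer computation and the unique-factorization argument—is routine. It is worth flagging at the outset that the hyperplanes here are central (linear subspaces of codimension $1$), so $Q(\mathcal A)$ really is a product of $|\mathcal A|$ linear forms and the degree count forced by hypothesis (2) is the right one.
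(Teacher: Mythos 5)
The paper cites this result (Saito's criterion, \cite[Theorem 4.23]{orlik2013arrangements}) without reproducing its proof, so there is no in-paper argument to compare against; your proposal is a correct reconstruction of the standard textbook proof. The structure is exactly the canonical one: identify $\det M$ with $c\cdot Q(\mathcal A)$ via the column-divisibility argument plus the degree count forced by hypothesis (2), then use Cramer's rule and the same divisibility for $\det M_i$ to show that every $\delta\in\der(\mathcal A)$ lies in the $\mathbb C[\mathbf x_n]$-span of $\rho_1,\ldots,\rho_n$. Your flag about the coordinate change is the right thing to worry about and resolves as you expect (a linear change of coordinates multiplies $M$ on the right by a constant invertible matrix, so the determinants in play change only by nonzero scalars); alternatively one can avoid the change of coordinates entirely by noting that if $\alpha=\sum_k c_k x_k$ with $c_k\ne 0$, then replacing the $k$-th column of $M$ by the vector $\bigl(\rho_1(\alpha),\ldots,\rho_n(\alpha)\bigr)^{T}$ changes $\det M$ by the factor $c_k$, and every entry of that replacement column is divisible by $\alpha$ by hypothesis (1). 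Either way, the argument is sound.
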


Given any hyperplane arrangement $\mathcal A$ and any hyperplane $H\in \mathcal A$, we can construct two new hyperplane arrangements. The first construction is the \textit{deletion} $\mathcal A\setminus H$: it is obtained by removing the hyperplane $H$ for $\mathcal A$:
$\mathcal A\setminus H=\{H'\in \mathcal A:H'\neq H\}.$ The second construction is the \textit{restriction} $\mathcal A^H$, obtained by taking intersections of the other hyperplanes of $\mathcal A$ with $H$. This yields a hyperplane arrangement in $H$ as follows:
$$\mathcal A^H=\{H'\cap H: H'\in\mathcal A, H'\ne H\}.$$

These constructions will be important to prove certain properties of hyperplanes inductively. In fact, the following result is often useful to establish freeness in inductively defined families of hyperplanes:

\begin{theorem}[Addition-Deletion Theorem, {\cite[Theorem 4.51]{orlik2013arrangements}}]\label{additiondeletion} 
Let $\mathcal A$ be a hyperplane arrangement in $\mathbb C^n$, and let $H\in \mathcal A$. Then any two of the following implies the third:
\begin{enumerate}
    \item[(1)] $\mathcal A$ is free with exponents $(e_1,\ldots, e_n)$.
    \item[(2)] $\mathcal A\setminus H$ is free with exponents $(e_1,\ldots, e_{n-1},e_n-1)$.
    \item[(3)] $\mathcal A^H$ is free with exponents $(e_1,\ldots, e_{n-1})$.
\end{enumerate}
\end{theorem}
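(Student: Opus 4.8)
The plan is to prove all three implications at once from a single exact sequence of graded $\mathbb{C}[\mathbf x_n]$-modules relating the three derivation modules, plus Hilbert-series bookkeeping and one structural input. Write $\mathcal{A}'=\mathcal{A}\setminus H$, $\mathcal{A}''=\mathcal{A}^H$, normalize coordinates so $H=H_\alpha$ with $\alpha=x_n$, put $S=\mathbb{C}[\mathbf x_n]$, and let $\bar S=S/(x_n)$ be the coordinate ring of $H$. First I would verify the basic maps: $\der(\mathcal{A})\subseteq\der(\mathcal{A}')$ (fewer divisibility conditions); $\alpha\cdot\der(\mathcal{A}')\subseteq\der(\mathcal{A})$ (the condition at $H$ becomes automatic and the others are inherited); and every $\delta\in\der(\mathcal{A})$ satisfies $\alpha\mid\delta(\alpha)$, hence descends to $\bar\delta\in\der_{\mathbb C}(\bar S)$, which reducing the conditions $\beta\mid\delta(\beta)$ modulo $\alpha$ shows lies in $\der(\mathcal{A}'')$. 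So restriction is a graded map $\pi\colon\der(\mathcal{A})\to\der(\mathcal{A}'')$, and a short divisibility computation (using $\alpha\mid\delta(\alpha)$ for the last coefficient) shows $\ker\pi=\alpha\der(\mathcal{A}')$, giving an exact sequence
\begin{equation*}
0 \longrightarrow \der(\mathcal{A}')(-1) \overset{\cdot\alpha}{\longrightarrow} \der(\mathcal{A}) \overset{\pi}{\longrightarrow} \der(\mathcal{A}'') \longrightarrow C \longrightarrow 0,
\end{equation*}
with $C:=\operatorname{coker}\pi$, whence $\operatorname{Hilb}(\der(\mathcal{A});t)=t\cdot\operatorname{Hilb}(\der(\mathcal{A}');t)+\operatorname{Hilb}(\der(\mathcal{A}'');t)-\operatorname{Hilb}(C;t)$.

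Each implication then follows by the same pattern, which I will illustrate for (1) and (2) $\Rightarrow$ (3). Freeness of $\mathcal{A}$ and $\mathcal{A}'$ makes $\der(\mathcal{A})$ and $\der(\mathcal{A}')(-1)$ free $S$-modules, so $M:=\operatorname{im}\pi=\der(\mathcal{A})/\alpha\der(\mathcal{A}')$ has projective dimension $1$ over $S$, and, since $\der(\mathcal{A})\subseteq\der(\mathcal{A}')$ forces $\alpha M=0$, it is a module over $\bar S$; by Auslander--Buchsbaum it is a maximal Cohen--Macaulay $\bar S$-module, hence free, and cancelling Hilbert series shows its generator degrees are $(e_1,\dots,e_{n-1})$. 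It remains to see $M=\der(\mathcal{A}'')$, i.e.\ $C=0$. For this, pick a homogeneous $\bar S$-basis $\bar\rho_1,\dots,\bar\rho_{n-1}$ of $M$; since each $\bar\rho_i\in\der(\mathcal{A}'')$ and they are $\bar S$-independent, the Saito determinant $\det[\bar\rho_i(x_j)]$ is a nonzero multiple of $Q(\mathcal{A}'')$, and a degree comparison — equivalently a localization of the exact sequence at the rank-two flats of $\mathcal{A}''$, where everything reduces to the automatic freeness of arrangements in $\mathbb{C}^2$ — shows this multiple is a scalar. Theorem~\ref{saito} then identifies $\{\bar\rho_i\}$ as a basis of $\der(\mathcal{A}'')$, so $C=0$ and $\mathcal{A}''$ is free with exponents $(e_1,\dots,e_{n-1})$. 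The other two implications run symmetrically: one solves the Hilbert-series identity for the remaining term, invokes the same MCM/Auslander--Buchsbaum step to upgrade ``has the Hilbert series of a free module'' to ``is free,'' and pins down the cokernel with a Saito-determinant computation — e.g.\ for (2) and (3) $\Rightarrow$ (1), the element $\alpha\rho'_n\in\der(\mathcal{A})$ (where $\rho'_n$ is the degree-$(e_n-1)$ generator of $\der(\mathcal{A}')$) together with suitable corrections of the remaining generators of $\der(\mathcal{A}')$ that lie in $\der(\mathcal{A})$ forms a basis, with Saito determinant $\alpha\cdot Q(\mathcal{A}')$ up to a scalar.

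The main obstacle, I expect, is exactly the step that converts the numerical (Hilbert-series) coincidence into an honest equality of modules: showing the relevant reflexive module is genuinely free (the Cohen--Macaulay/Auslander--Buchsbaum input) and, crucially, showing the cokernel $C$ vanishes, which forces the degree of the relevant Saito determinant to equal $|\mathcal{A}''|$ and really uses the local structure of $\der(\mathcal{A}'')$ at codimension-two flats. A secondary nuisance is bookkeeping with repeated exponents: when several $e_i$ coincide, $\pi$ need not kill a single basis element of $\der(\mathcal{A})$, and one must extract a minimal-degree $\bar S$-linear relation among the $\pi(\rho_i)$ and lift it to obtain the element of $\alpha\der(\mathcal{A}')$ feeding the determinant computation. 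The remaining ingredients — constructing $\pi$, computing $\ker\pi$, and the Hilbert-series arithmetic — are routine once coordinates are fixed.
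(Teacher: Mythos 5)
The paper does not prove this result: it is cited verbatim from Orlik--Terao (\cite[Theorem 4.51]{orlik2013arrangements}), so there is no ``paper proof'' to compare against. I'll therefore evaluate your sketch on its own merits.

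Your high-level skeleton is the right one and matches the standard argument: set up the exact sequence $0 \to \der(\mathcal A\setminus H)(-1) \xrightarrow{\cdot\alpha} \der(\mathcal A) \xrightarrow{\pi} \der(\mathcal A^H) \to C \to 0$, do Hilbert-series bookkeeping, and use a depth/Auslander--Buchsbaum argument to upgrade ``has the Hilbert series of a free module'' to ``is free.'' That part is sound, and the observation that $M = \der(\mathcal A)/\alpha\der(\mathcal A\setminus H)$ is annihilated by $\alpha$ and is a maximal Cohen--Macaulay $\bar S$-module (hence free) is correct and is essentially how this is handled in the literature.

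The genuine gap is in the step you yourself flag as ``the main obstacle'': proving $C=0$. In the (1)+(2) $\Rightarrow$ (3) direction, you pick a homogeneous $\bar S$-basis $\bar\rho_1,\ldots,\bar\rho_{n-1}$ of $M\subseteq\der(\mathcal A^H)$ and claim the Saito determinant $\det[\bar\rho_i(x_j)]$ is a scalar multiple of $Q(\mathcal A^H)$ ``by a degree comparison.'' But the degree of that determinant is $e_1+\cdots+e_{n-1}$, while divisibility by $Q(\mathcal A^H)$ only gives $e_1+\cdots+e_{n-1}\ge |\mathcal A^H|$; the reverse inequality $e_1+\cdots+e_{n-1}\le|\mathcal A^H|$ is precisely the nontrivial numerical content of the statement $\exp(\mathcal A^H)=(e_1,\ldots,e_{n-1})$, which is what you are trying to prove. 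In general $|\mathcal A^H|$ can be strictly less than $|\mathcal A|-1$ (several hyperplanes of $\mathcal A$ can restrict to the same hyperplane in $H$), so nothing forces $|\mathcal A^H| = |\mathcal A| - e_n$ without a further argument. The parenthetical ``localization at rank-two flats'' is gestured at but not developed; as written it does not close the gap, because the codimension-two local picture does not by itself control the multiplicity with which each hyperplane of $\mathcal A^H$ is hit. Similarly, in the (2)+(3) $\Rightarrow$ (1) direction the phrase ``suitable corrections of the remaining generators of $\der(\mathcal A\setminus H)$ that lie in $\der(\mathcal A)$'' elides the actual construction: there is no a priori reason such corrected generators of the right degrees exist, and producing them (equivalently, proving $\pi$ surjective from freeness of $\mathcal A\setminus H$ and $\mathcal A^H$) is the heart of the theorem. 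To make this rigorous you would need the intermediate lemma from Orlik--Terao on when $\pi$ is surjective (their Proposition 4.45 and Theorem 4.46), or an explicit multiplicity argument à la Ziegler restrictions; neither appears in your sketch.
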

\subsection{Solomon-Terao Algebras}
Given polynomials $p_1,\ldots, p_n\in \mathbb{C}[\mathbf x_n]$, one can define an $\mathbb{C}[\mathbf x_n]$-module map $\mathfrak c:\der(\mathbb{C}[\mathbf x_n])\to \mathbb{C}[\mathbf x_n]$ by setting $\partial_n\mapsto p_n$ and extending by $\mathbb{C}[\mathbf x_n]$-linearity. If the $p_i$'s are homogeneous of the same degree, we say that $\mathfrak c$ is homogeneous as well. For an arrangement $\mathcal A$ in $\mathbb C^n$, $\der(\mathcal A)$ is an $\mathbb{C}[\mathbf x_n]$-submodule of $\der(\mathbb{C}[\mathbf x_n])$, and so the image $\mathfrak c(\der(\mathcal A))$ is an ideal in $\mathbb{C}[\mathbf x_n]$. We can therefore form the quotient $\mathbb{C}[\mathbf x_n]/\mathfrak c(\der(\mathcal A))$: this is called the \textit{Solomon-Terao algebra} associated with $\mathcal A$ and $\mathfrak c$, and will be denoted $\mathcal{ST}(\mathcal A,\mathfrak c)$. The ideal $\mathfrak \mathfrak c(\der(\mathcal A))$ is called the \textit{Solomon-Terao ideal}, and will be denoted $\mathfrak c_\mathcal{A}$.

These algebras are well-behaved if we restrict our attention to free arrangements. In fact, the following result gives us a useful way to describe the Solomon-Terao ideal for a free subarrangement of a free arrangement. Let us recall that given an ideal $I\subseteq \mathbb{C}[\mathbf x_n]$ and $f\in \mathbb{C}[\mathbf x_n]$, the \textit{colon ideal} $I:f$ is defined as $$I:f=\{g\in \mathbb{C}[\mathbf x_n]:fg\in I\}.$$

\begin{theorem}[{\cite[Lem. 4.4]{angarone2025superspace}}]\label{arrcolonideal}
    Suppose $\mathcal A$ is a free arrangement in $\mathbb C^n$, and $\mathcal B\subseteq \mathcal A$ is a free subarrangement. Let $\mathfrak c:\der(\mathbb{C}[\mathbf x_n])\to \mathbb{C}[\mathbf x_n]$ be a homogeneous map so that $\mathcal{ST}(\mathcal A,\mathfrak c)$ is Artinian, and also that the polynomial $Q(\mathcal A)/Q(\mathcal B)$ does not belong to $\mathfrak c_\mathcal{A}$. Then $$\mathfrak c_\mathcal{B}=\mathfrak c_{\mathcal A}:\left(Q(\mathcal A)/Q(\mathcal B)\right).$$
\end{theorem}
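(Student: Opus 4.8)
The plan is to prove the two ideal inclusions separately: one is essentially formal, and the other uses the full strength of all the hypotheses through a Gorenstein-duality argument. Throughout write $P=\mathbb{C}[\mathbf x_n]$, set $\varrho:=Q(\mathcal A)/Q(\mathcal B)$ and $d:=\deg\varrho$, abbreviate $R_{\mathcal A}:=\mathcal{ST}(\mathcal A,\mathfrak c)=P/\mathfrak c_{\mathcal A}$ and $R_{\mathcal B}:=P/\mathfrak c_{\mathcal B}$, and let $m$ be the common degree of the homogeneous map $\mathfrak c$. Since $\mathcal B\subseteq\mathcal A$ forces $\der(\mathcal A)\subseteq\der(\mathcal B)$, applying the $P$-linear map $\mathfrak c$ gives $\mathfrak c_{\mathcal A}\subseteq\mathfrak c_{\mathcal B}$; in particular $R_{\mathcal B}$ is a quotient of the Artinian ring $R_{\mathcal A}$, hence is itself Artinian. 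For the inclusion ``$\mathfrak c_{\mathcal B}\subseteq\mathfrak c_{\mathcal A}:\varrho$'' I would use the elementary fact that $\varrho\cdot\der(\mathcal B)\subseteq\der(\mathcal A)$ --- checking $\alpha\mid(\varrho\eta)(\alpha)$ hyperplane by hyperplane, this holds for $H_\alpha\in\mathcal B$ because $\eta$ is parallel to $H_\alpha$, and for $H_\alpha\in\mathcal A\setminus\mathcal B$ because $\alpha\mid\varrho$ --- and then apply $\mathfrak c$ to get $\varrho\,\mathfrak c_{\mathcal B}\subseteq\mathfrak c_{\mathcal A}$. The content of the theorem is the reverse inclusion, equivalently that the induced surjection $R_{\mathcal B}\twoheadrightarrow P/(\mathfrak c_{\mathcal A}:\varrho)$ is an isomorphism; since multiplication by $\varrho$ identifies $P/(\mathfrak c_{\mathcal A}:\varrho)$ with the graded $P$-submodule $\varrho R_{\mathcal A}\subseteq R_{\mathcal A}$ (shifted by $d$), this is the same as showing that the degree-preserving $P$-linear map
\[
\phi\colon R_{\mathcal B}(-d)\longrightarrow R_{\mathcal A},\qquad \overline b\longmapsto\overline{\varrho b},
\]
is injective.

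The key input is the Gorenstein structure supplied by the hypotheses. Because $\mathcal A$ is free, $\mathfrak c_{\mathcal A}$ is generated by the $n$ homogeneous polynomials obtained by applying $\mathfrak c$ to a homogeneous basis of $\der(\mathcal A)$, of degrees $e_i+m$ where $(e_1,\dots,e_n)=\exp(\mathcal A)$; as $\mathcal{ST}(\mathcal A,\mathfrak c)$ is Artinian these $n$ elements form a homogeneous system of parameters in $P$, hence a regular sequence, so $R_{\mathcal A}$ is a complete intersection, in particular Gorenstein Artinian with socle degree $s_{\mathcal A}=\sum_i(e_i+m)-n$. The same reasoning, using freeness of $\mathcal B$ together with the Artinian-ness of $R_{\mathcal B}$ just established, shows $R_{\mathcal B}$ is Gorenstein Artinian with socle degree $s_{\mathcal B}=\sum_i(f_i+m)-n$, where $(f_1,\dots,f_n)=\exp(\mathcal B)$. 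The numerical coincidence that makes everything fit is
\[
d=\deg\varrho=|\mathcal A|-|\mathcal B|=\textstyle\sum_i e_i-\sum_i f_i=s_{\mathcal A}-s_{\mathcal B},
\]
using that a free arrangement satisfies $|\mathcal A|=\sum\exp(\mathcal A)$.

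To conclude I would dualize. Applying the graded $\mathbb{C}$-dual $(-)^{\vee}=\operatorname{Hom}_{\mathbb{C}}(-,\mathbb{C})$ to $\phi$ and invoking the Macaulay isomorphisms $R_{\mathcal A}^{\vee}\cong R_{\mathcal A}(s_{\mathcal A})$ and $R_{\mathcal B}^{\vee}\cong R_{\mathcal B}(s_{\mathcal B})$ of graded $P$-modules, together with $s_{\mathcal B}+d=s_{\mathcal A}$, the dual $\phi^{\vee}$ becomes a degree-preserving $P$-linear map $R_{\mathcal A}\to R_{\mathcal B}$. But $\mathfrak c_{\mathcal A}\subseteq\mathfrak c_{\mathcal B}$ annihilates $R_{\mathcal B}$, so every $P$-linear map $R_{\mathcal A}\to R_{\mathcal B}$ is multiplication by an element of $R_{\mathcal B}$, and the degree-$0$ such maps are exactly the scalar multiples of the canonical projection $\pi\colon R_{\mathcal A}\twoheadrightarrow R_{\mathcal B}$. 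Here is where the last hypothesis enters: $\varrho\notin\mathfrak c_{\mathcal A}$ says $\phi(\overline 1)=\overline\varrho\ne 0$, so $\phi\ne 0$, hence $\phi^{\vee}\ne 0$, hence $\phi^{\vee}$ is a nonzero scalar multiple of $\pi$ and in particular surjective. Since $(-)^{\vee}$ interchanges surjections and injections on finite-dimensional graded spaces, $\phi$ is injective, which is exactly the assertion $\mathfrak c_{\mathcal B}=\mathfrak c_{\mathcal A}:\varrho$.

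I expect the main obstacle to be the reverse inclusion, and within it the grading bookkeeping of the duality step: this is where both freeness hypotheses are genuinely used (so that $s_{\mathcal A}$ and $s_{\mathcal B}$ are computable from the exponents and the identity $d=s_{\mathcal A}-s_{\mathcal B}$ holds), while the subsequent identification of $\phi^{\vee}$ with a multiple of $\pi$ and the non-vanishing of $\phi$ via $\varrho\notin\mathfrak c_{\mathcal A}$ are comparatively soft. An alternative ending would be a direct Hilbert-series comparison, starting from the exact sequence $0\to(P/(\mathfrak c_{\mathcal A}:\varrho))(-d)\xrightarrow{\;\cdot\varrho\;}R_{\mathcal A}\to P/(\mathfrak c_{\mathcal A}+(\varrho))\to 0$ and checking that $\operatorname{Hilb}(P/(\mathfrak c_{\mathcal A}:\varrho))$ equals $\operatorname{Hilb}(R_{\mathcal B})=\prod_i[f_i+m]_q$; but pinning down $\operatorname{Hilb}(P/(\mathfrak c_{\mathcal A}+(\varrho)))$ seems to require the same structural facts, so the Gorenstein-duality packaging looks cleaner.
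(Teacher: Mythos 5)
The paper does not supply its own proof of this statement: it is imported verbatim as a cited result from Angarone–Commins–Karn–Murai–Rhoades \cite[Lem.\ 4.4]{angarone2025superspace}, so there is no in-paper argument for you to be compared against. What can be said is that the machinery the paper \emph{does} record nearby --- Lemma \ref{commalg} (Abe's Poincar\'e-duality lemma for colon ideals) and Lemma \ref{PDA} (complete intersections are Poincar\'e duality algebras) --- is precisely what one would invoke to prove Theorem \ref{arrcolonideal}, and your argument is a self-contained reproof of that instance of Lemma \ref{commalg} specialized to Solomon--Terao ideals of free (sub)arrangements.

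On its own merits, your proof is correct, and the structure is sound. The easy inclusion $\varrho\cdot\der(\mathcal B)\subseteq\der(\mathcal A)$, hence $\mathfrak c_{\mathcal B}\subseteq(\mathfrak c_{\mathcal A}:\varrho)$, is handled cleanly. The content is the reverse inclusion, and your route is: freeness plus Artinian-ness make both $R_{\mathcal A}$ and $R_{\mathcal B}$ graded complete intersections (a homogeneous system of parameters in a polynomial ring is a regular sequence), hence Gorenstein with computable socle degrees; the free-arrangement identity $|\mathcal A|=\sum\exp(\mathcal A)$ gives the crucial bookkeeping $d=\deg\varrho=s_{\mathcal A}-s_{\mathcal B}$; dualizing the multiplication map $\phi\colon R_{\mathcal B}(-d)\to R_{\mathcal A}$ via Macaulay duality produces a degree-preserving $P$-linear (hence $R_{\mathcal A}$-linear, hence multiplication-by-scalar) map $R_{\mathcal A}\to R_{\mathcal B}$; and $\varrho\notin\mathfrak c_{\mathcal A}$ forces that scalar to be nonzero, so $\phi^\vee$ is the (nonzero multiple of the) surjection $\pi$, making $\phi$ injective. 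This is exactly the Poincar\'e-duality/socle-degree comparison encapsulated by Lemma \ref{commalg}, just unpacked explicitly; applying Lemma \ref{commalg} directly (with $\mathfrak a=\mathfrak c_{\mathcal A}$, $\mathfrak a'=\mathfrak c_{\mathcal B}$, $f=\varrho$) would compress the second half of your argument to a hypothesis check. Either packaging is fine; yours has the pedagogical advantage of exposing where each hypothesis (freeness of $\mathcal A$, freeness of $\mathcal B$, Artinian-ness, $\varrho\notin\mathfrak c_{\mathcal A}$) actually bites.
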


We will exclusively use the special case where $\mathfrak c$ is the homogeneous map $\mathfrak a$ given by $\partial_i\mapsto x_i$. This has additional nice properties in the context of free arrangements: for example, it is known that for a free arrangement $\mathcal A$, $\mathfrak a_\mathcal{A}$ is a complete intersection and $\mathcal{ST}(\mathcal A,\mathfrak a)$ a finitely generated Poincar\'e duality algebra (see \cite{abe2020hessenberg}). In particular, we will never have to verify that $\mathcal{ST}(\mathcal A,\mathfrak a)$ is Artinian while applying the above theorem.

\section{Upper Bound\label{upperboundsection}}
The first part of our argument involves coming up with a spanning set of $SR^B_n$ in order to bound $\operatorname{Hilb}(SR^B_n;q,z)$ from above. For $S=\{i_1,\ldots, i_k\}\subseteq [n]$, define $h^2_r(S)$ via $$h_r^2(S):=h_r(x_{i_1}^2,\ldots, x_{i_k}^2)$$ where $h_r(t_1,\ldots, t_k)$ denotes the homogeneous symmetric polynomial of degree $r$ in the variables $t_1,\ldots, t_k$ obtained by summing all monomials of degree $r$ in $t_1,\ldots, t_k$. In other words, $h^2(S)$ is obtained from the usual homogeneous symmetric polynomials in $x_{I_1},\ldots, x_{i_k}$ by replacing each variable with its square. 

Given $J\subseteq [n]$, we define $q_{J,i}$ as follows:
\begin{equation}\label{qdefn}
     q_{J,i}=\begin{cases}
    h^2_{r_i}(J\cup\{i,\ldots, n\})\cdot \theta_J & i\notin J\\
    dh^2_{r_i}(J\cup\{i,\ldots, n\})\cdot \theta_{J-i} & i\in J
\end{cases}
\end{equation}
where $r_i=n-\left| J\cup\{i,\ldots, n\}\right|+1$.
We will need to look at the projections of their Gale leading terms, which we define as follows:

\begin{equation}\label{pdefn}
    p_{J,i}=\begin{cases}
    h^2_{r_i}(J\cup\{i,\ldots, n\}) & i\notin J\\
    \partial_i h^2_{r_i}(J\cup\{i,\ldots, n\}) & i\in J
\end{cases}
\end{equation}
with $r_i$ as before. It is easy to use \eqref{eqstB4} to see that for any $J\subseteq[n]$ and any $i\in [n]$, 
\begin{equation}
    \deg p_{J,i}=\stB_i(J)+1.
\end{equation}

For the purposes of illustration, let us use the example $n=4$ and $J=\{2,4\}$. Then the $J$-staircase of type B is $(1,2,3,4)$, and we have 
\begin{align*}
   p_{J,1}&= h^2_1(\{1, 2, 3, 4\})=x_1^2+x_2^2+x_3^2+x_4^2,\\
p_{J,2}&=\partial_2 h^2_2(\{2, 3, 4\})=\partial_2(x_2^4+x_3^4+x_4^4+x_2^2x_3^2+x_3^2x_4^2+x_2^2x_4^2)=4x_2^3+2x_2x_3^2+2x_2x_4^2,\\
p_{J,3}&=h^2_2(\{2, 3, 4\})=x_2^4+x_3^4+x_4^4+x_2^2x_3^2+x_3^2x_4^2+x_2^2x_4^2,\\
p_{J,4}&=\partial_4h^2_3(\{2,4\})=\partial_4(x_2^6+x_4^6+x_2^4x_4^2+x_2^2x_4^4)=6x_4^5+2x_2^4x_4+4x_2^2x_4^3.
\end{align*}

As we will show soon, the $q_{J,i}$'s defined above are elements of $SI^B_n$ with some convenient properties. To see a concrete example, let us take $n=4$ and $J=\{2,4\}$ again, and compute $q_{J,2}:$
$$q_{J,2}=dh^2_2(\{2,3,4\})\cdot\theta_{\{4\}}=(4x_2^3+2x_2x_3^2+2x_2x_4^2)\theta_2\theta_4+(4x_3^3+2x_2^2x_3+2x_3x_4^2)\theta_3\theta_4.$$Note that $p_{J,2}$ appears in this expression as the coefficient of $\theta_J$, and every other $\theta_K$ appearing in the expression corresponds to some $K$ bigger than $J$ in Gale order; in other words, $p_{J,i}\cdot\theta_J$ is the Gale-leading term of $q_{J,i}$. This is true in general, as seen in the following lemma:

\begin{lemma}\label{leadingterm}
For any $J\subseteq [n]$ and $i\in [n]$, $q_{J,i}$ belongs to $SI^B_n$. Furthermore, $q_{J,I}$ lies in $\bigoplus_{J\le_{\text{Gale}}K}\mathbb C[\mathbf x_n]\cdot \theta_K$, where the projection onto the $\mathbb C[\mathbf x_n]\cdot \theta_J$ component equals $\pm p_{J,i}\cdot \theta_J$.
\end{lemma}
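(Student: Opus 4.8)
Both assertions will follow from a single commutative-algebra fact: the polynomial $h^2_{r_i}(S)$, with $S=J\cup\{i,\dots,n\}$ and $r_i=n-|S|+1$, already lies in the polynomial coinvariant ideal $I^B_n=(p_2,\dots,p_{2n})$. I would prove this via the substitution $y_j=x_j^2$: under it $p_{2k}\mapsto p_k(\mathbf y)$, so $I^B_n$ is the extension to $\mathbb C[\mathbf x_n]$ of the ideal $(p_1(\mathbf y),\dots,p_n(\mathbf y))=(e_1(\mathbf y),\dots,e_n(\mathbf y))=(h_1(\mathbf y),\dots,h_n(\mathbf y))$ of $\mathbb C[\mathbf y]$, the first equality being Newton's identities (valid in characteristic zero) and the second a standard relation between the $e$'s and $h$'s. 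From the generating-function identity $\sum_{r\ge 0}h_r(y_j:j\in S)\,t^r=\big(\prod_{j\notin S}(1-y_jt)\big)\cdot\sum_{r\ge 0}h_r(\mathbf y)\,t^r$ together with the congruence $\sum_{r\ge 0}h_r(\mathbf y)\,t^r\equiv 1$ modulo $(h_1(\mathbf y),\dots,h_n(\mathbf y))$, one sees that $h_r(y_j:j\in S)$ lies in that ideal whenever $r$ exceeds the degree $n-|S|$ of $\prod_{j\notin S}(1-y_jt)$; taking $r=r_i=n-|S|+1$ gives $h^2_{r_i}(S)\in I^B_n$.

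Granting this, membership in $SI^B_n$ is quick. If $i\notin J$, then $q_{J,i}=h^2_{r_i}(S)\cdot\theta_J$ is an element of $I^B_n$ times $\theta_J$, hence lies in $SI^B_n$. If $i\in J$, write $h^2_{r_i}(S)=\sum_k f_k\,p_{2k}$ with $f_k\in\mathbb C[\mathbf x_n]$; since $d(fg)=d(f)\,g+f\,d(g)$ for bosonic $f,g$, we get $d\big(h^2_{r_i}(S)\big)=\sum_k\big(d(f_k)\,p_{2k}+f_k\,dp_{2k}\big)$, which lies in $SI^B_n$ because every $p_{2k}$ and every $dp_{2k}$ does. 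Hence $q_{J,i}=d\big(h^2_{r_i}(S)\big)\cdot\theta_{J-i}\in SI^B_n$.

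For the triangularity statement, the case $i\notin J$ is immediate: $q_{J,i}$ is a polynomial times $\theta_J$ and $p_{J,i}=h^2_{r_i}(S)$ by definition, so its $\theta_J$-component is exactly $p_{J,i}\,\theta_J$ and no other $\theta_K$ occurs. For $i\in J$, I would expand $d\big(h^2_{r_i}(S)\big)=\sum_{\ell\in S}\partial_\ell h^2_{r_i}(S)\,\theta_\ell$, where only indices $\ell\in S$ appear since $h^2_{r_i}(S)$ is a polynomial in the variables $x_j$ with $j\in S$. Multiplying by $\theta_{J-i}$, the term indexed by $\ell$ survives exactly when $\ell\notin J\setminus\{i\}$, i.e.\ when $\ell\in\{i\}\cup(\{i+1,\dots,n\}\setminus J)$, and then $\theta_\ell\theta_{J-i}=\pm\theta_{(J\setminus\{i\})\cup\{\ell\}}$. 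The term $\ell=i$ gives $\theta_i\theta_{J-i}=(-1)^{s-1}\theta_J$, where $i$ is the $s$-th smallest element of $J$, so its contribution to $q_{J,i}$ equals $\pm\partial_i h^2_{r_i}(S)\cdot\theta_J=\pm p_{J,i}\cdot\theta_J$, the claimed leading term. For $\ell>i$ with $\ell\notin J$, the set $(J\setminus\{i\})\cup\{\ell\}$ is obtained from $J$ by deleting its $s$-th smallest element and inserting the strictly larger element $\ell$, and a direct entrywise comparison of the two sorted sequences shows $J\le_{\text{Gale}}(J\setminus\{i\})\cup\{\ell\}$. Hence every $\theta_K$ appearing in $q_{J,i}$ satisfies $J\le_{\text{Gale}}K$, which finishes the argument.

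The only substantive ingredient is the polynomial lemma of the first paragraph — that the complete homogeneous symmetric polynomial of a subset of the squared variables, of degree one more than the size of the complementary subset, belongs to $I^B_n$ — and I expect this, together with carefully bookkeeping which $\theta_K$ survive the multiplication by $\theta_{J-i}$ and verifying the Gale inequality for each survivor, to be where essentially all the content lies; the remainder is formal manipulation with the derivation $d$ and the Gale order.
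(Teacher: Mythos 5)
Your proof is correct and follows the same route as the paper's: establish $h^2_{r_i}(S)\in I^B_n$ by reducing (via $y_j=x_j^2$) to the well-known fact that $h_r$ in a subset $S$ of the variables lies in $I_n$ once $r>n-|S|$, use closure of $SI^B_n$ under $d$ to get membership, and analyze the survivors of $d(h^2_{r_i}(S))\cdot\theta_{J-i}$ for the Gale-triangularity. You simply spell out two steps the paper leaves implicit --- the generating-function derivation of the complete-homogeneous inclusion and the entrywise verification of $J\le_{\text{Gale}}(J\setminus\{i\})\cup\{\ell\}$ --- so the content is identical, just more detailed.
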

\begin{proof}
It is well-known that for any $S\subseteq [n]$ with $r>n-|S|$, we have $h_r(S)\in I_n$ . Replacing the $x_i$'s with $x_i^2$'s, this implies $h_r(S)$ is an $\mathbb C[x_1^2,\ldots, x_n^2]$-linear combination of the generators of $I^B_n$, which implies $h^2_r(S)$ is in $I^B_n$. The first statement now follows from this observation and the fact that $SI^B_n$ is closed under the action of $d$. 

The second statement is clear when $i\not\in J$. When $i\in J$, we simply note that $$dh_{r_i}^2(J\cup\{i,\ldots, n\})\cdot\theta_{J-i}=\sum_{k\in J\cup \{i,\ldots, n\}}\partial_kh^2_{r_i}(J\cup\{i,\ldots, n\})\cdot\theta_k\cdot\theta_{J-i}.$$A monomial in the above sum vanishes when $k\in J-i$, and the remaining monomials all satisfy $J\le_{\text{Gale}}K$. The one corresponding to $\theta_J$ is exactly equal to $p_{J,i}\cdot\theta_i\cdot\theta_{J-i}$, proving our claim. 
\end{proof}
It so happens that the $p_{J,i}$'s defined above are well-behaved. Before we expand on this, let us recall that a sequence of polynomials $(f_1,\ldots, f_r)$ in $\mathbb C[\mathbf x_n]$ is called a regular sequence if for $i=1,\ldots, r$ the polynomial $f_i$ is a non-zero divisor in the quotient ring $\mathbb C[\mathbf x_n]/(f_1,\ldots, f_{i-1})$, and the quotient $\mathbb C[\mathbf x_n]/(f_1,\ldots, f_r)$ is non-zero. 

If $(f_1,\ldots, f_r)$ is a regular sequence of homogeneous polynomials (not necessarily of the same degree) in $\mathbb C[\mathbf x_n]$, then the quotient $\mathbb C[\mathbf x_n]/(f_1,\ldots, f_r)$ is a finite-dimensional graded vector space, and in addition we have 
\begin{equation}
    \operatorname{Hilb}\left(\mathbb C[\mathbf x_n]/(f_1,\ldots, f_r);q\right)=[\deg f_1]_q\cdots [\deg f_r]_q.\label{hilbregularsequences}
\end{equation}

The regularity of sequences of polynomials, under certain nice conditions, can be checked by analyzing the variety cut out by the ideal generated by them, as seen from the following well-known lemma.

\begin{lemma}
    Let $f_1,\ldots, f_n$ be homogeneous polynomials in $\mathbb C[\mathbf x_n]$ of positive degree. Then  $(f_1,\ldots, f_n)$ is regular if and only if $$\{\mathbf x\in\mathbb C^n:f_1(\mathbf x)=\cdots=f_r(\mathbf x)=0\}=\{\mathbf 0\}.$$
\end{lemma}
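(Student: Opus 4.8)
The plan is to show that both sides of the asserted equivalence reduce to one statement — that $A:=\mathbb C[\mathbf x_n]/(f_1,\ldots,f_n)$ is finite-dimensional over $\mathbb C$, equivalently $\dim_{\mathrm{Krull}}A=0$ — and then to move among the three formulations using the projective Nullstellensatz and the Cohen--Macaulayness of the polynomial ring. Throughout write $I=(f_1,\ldots,f_n)$ and $\mathfrak m=(x_1,\ldots,x_n)$.

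First I would record the geometric translation. Since the $f_i$ are homogeneous of positive degree, $I\subseteq\mathfrak m$, so $I$ is proper, $A\ne 0$, and $\mathbf 0$ lies in the common zero locus $V(I)$. Homogeneity also makes $V(I)$ a cone: if $\mathbf x\in V(I)$ and $\lambda\in\mathbb C$ then $f_i(\lambda\mathbf x)=\lambda^{\deg f_i}f_i(\mathbf x)=0$. A nonempty cone in $\mathbb C^n$ is either $\{\mathbf 0\}$ or infinite, so $V(I)=\{\mathbf 0\}$ precisely when $V(I)$ is finite; and by the Nullstellensatz $V(I)$ is finite precisely when $A$ is Artinian, i.e. $\dim_{\mathbb C}A<\infty$. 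Hence the right-hand condition of the lemma is equivalent to $\dim_{\mathbb C}A<\infty$.

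It then remains to prove that $(f_1,\ldots,f_n)$ is a regular sequence if and only if $\dim_{\mathbb C}A<\infty$. The forward direction is immediate from the Hilbert-series formula \eqref{hilbregularsequences}: regularity gives $\operatorname{Hilb}(A;q)=[\deg f_1]_q\cdots[\deg f_n]_q$, a polynomial in $q$ since each $\deg f_i\ge 1$, so $\dim_{\mathbb C}A=\prod_i\deg f_i<\infty$. For the converse, assume $\dim_{\mathbb C}A<\infty$, i.e. $\dim_{\mathrm{Krull}}A=0$. Because $\mathbb C[\mathbf x_n]$ is an affine domain of dimension $n$, the dimension formula $\operatorname{ht}\mathfrak p+\dim\mathbb C[\mathbf x_n]/\mathfrak p=n$ holds for every prime, hence $\operatorname{ht}I=n-\dim A=n$. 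Now $\mathbb C[\mathbf x_n]$ is Cohen--Macaulay (it is a polynomial ring over a field), and in a Cohen--Macaulay ring any $c$ elements of the maximal (here: irrelevant) ideal generating an ideal of height $c$ form a regular sequence; applying this with $c=n$ shows $(f_1,\ldots,f_n)$ is a regular sequence, the quotient $A$ being nonzero as already observed.

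I expect the only step that is not bookkeeping to be this last ingredient: the commutative-algebra fact that in a Cohen--Macaulay ring a system of parameters is automatically a regular sequence (equivalently, that a height-$n$ ideal of $\mathbb C[\mathbf x_n]$ generated by $n$ elements is a complete intersection). This is exactly where regularity/Cohen--Macaulayness of $\mathbb C[\mathbf x_n]$ enters; everything else is the cone observation together with the Nullstellensatz. I would also take care to invoke the positive-degree hypothesis explicitly, since it is what forces $I\subseteq\mathfrak m$ and hence $A\ne 0$ and $\mathbf 0\in V(I)$, so that "$=\{\mathbf 0\}$" is the correct nonempty formulation of the vanishing condition.
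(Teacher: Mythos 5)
The paper states this as a well-known lemma and gives no proof, so there is no internal argument to compare against; your proof is a correct, standard derivation of the fact. Your chain of reductions (cone observation plus Nullstellensatz $\Leftrightarrow$ $\dim_{\mathbb C}A<\infty$ $\Leftrightarrow$ regular sequence) is exactly right, with the forward implication by \eqref{hilbregularsequences} and the converse by Cohen--Macaulayness. The only place worth a sentence of extra care is the last step: the statement ``in a CM ring, $c$ elements generating a height-$c$ ideal form a regular sequence'' is usually phrased for local rings, whereas $\mathbb C[\mathbf x_n]$ is not local. Since your $f_i$ are homogeneous, this is harmless: either invoke the graded (\emph{$*$-local}) version directly, or localize at $\mathfrak m=(x_1,\ldots,x_n)$, note $f_1,\ldots,f_n$ become a system of parameters in the regular local ring $\mathbb C[\mathbf x_n]_\mathfrak m$ and hence a regular sequence there, and then pass back to the graded ring using that associated primes of a homogeneous ideal are homogeneous (so zerodivisor-hood is detected after localizing at $\mathfrak m$). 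With that remark made explicit, the argument is complete.
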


It will turn out that for each $J\subseteq [n]$, $(p_{J,1},\ldots, p_{J,n})$ is a regular sequence of 
homogeneous polynomials. We will use the above lemma to prove this; but before we do, we need an algebraic identity involving the expressions $h^2_r(S)$ that appear in the definition of $p_{J,i}$.
\begin{lemma}
     For any $S\subseteq[n]$ with $a\in S$ and $b\notin S $, and integer $r>1$, we have $$\partial_a h^2_r(S)=(x_a^2-x_b^2)\partial_a h^2_{r-1}(S\cup b)+2x_a h_{r-1}^2 (S\cup b).$$
\end{lemma}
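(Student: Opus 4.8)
The plan is to substitute $y_i = x_i^2$ and reduce the statement to two elementary identities for complete homogeneous symmetric polynomials, using the generating function $\sum_{r\ge 0} h^2_r(S)\, t^r = \prod_{i\in S}(1-y_i t)^{-1}$, where $h^2_r(S) = h_r(\{y_i : i\in S\})$.

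First I would record the \emph{remove-a-variable} identity: for $b\notin S$ and $r\ge 1$,
$$h^2_r(S) = h^2_r(S\cup b) - x_b^2\, h^2_{r-1}(S\cup b).$$
This is immediate from the generating function, since adjoining the variable $y_b$ multiplies the series by $(1-x_b^2 t)^{-1}$; comparing coefficients of $t^r$ in the identity $\sum_r h^2_r(S) t^r = (1-x_b^2 t)\sum_r h^2_r(S\cup b)\, t^r$ gives the claim. Applying $\partial_a$ — and noting that $a\in S\subseteq S\cup b$ while $a\neq b$, so $\partial_a$ annihilates $x_b^2$ — produces
$$\partial_a h^2_r(S) = \partial_a h^2_r(S\cup b) - x_b^2\,\partial_a h^2_{r-1}(S\cup b).$$

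Next I would prove the \emph{derivative} identity: for any $T\subseteq[n]$ with $a\in T$ and $r\ge 1$,
$$\partial_a h^2_r(T) = x_a^2\,\partial_a h^2_{r-1}(T) + 2x_a\, h^2_{r-1}(T).$$
The quickest route is again the generating function: by the chain rule $\partial_a = 2x_a\,\partial_{y_a}$, together with $\partial_{y_a}\prod_{i\in T}(1-y_i t)^{-1} = \tfrac{t}{1-y_a t}\prod_{i\in T}(1-y_i t)^{-1}$, one gets $(1-x_a^2 t)\sum_r \partial_a h^2_r(T)\, t^r = 2x_a t\sum_r h^2_r(T)\, t^r$, and extracting the coefficient of $t^r$ yields the identity. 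Alternatively one can split $h^2_r(T) = \sum_{m\ge 0} x_a^{2m}\, h^2_{r-m}(T\setminus a)$, differentiate termwise in $x_a$, and reindex.

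Finally, applying the derivative identity with $T = S\cup b$ and substituting into the earlier display gives
$$\partial_a h^2_r(S) = x_a^2\,\partial_a h^2_{r-1}(S\cup b) + 2x_a\, h^2_{r-1}(S\cup b) - x_b^2\,\partial_a h^2_{r-1}(S\cup b),$$
and collecting the two terms in $\partial_a h^2_{r-1}(S\cup b)$ is exactly the asserted identity $(x_a^2-x_b^2)\,\partial_a h^2_{r-1}(S\cup b) + 2x_a\, h^2_{r-1}(S\cup b)$; the hypothesis $r>1$ comfortably covers the $r\ge 1$ needed in both auxiliary steps. I do not expect a genuine obstacle here: the only points requiring care are the chain-rule factor $2x_a$ and keeping the roles of $a$ (present in $S$) and $b$ (absent from $S$) straight so that $\partial_a x_b^2=0$; everything else is one-line symmetric-function bookkeeping.
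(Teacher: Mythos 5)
Your proof is correct. You package the argument as two generating-function identities (a remove-a-variable identity and a derivative identity) and then substitute one into the other, whereas the paper first observes that the right-hand side of the claimed equality is $\partial_a\bigl((x_a^2-x_b^2)h^2_{r-1}(S\cup b)\bigr)$, so that it suffices to show $h^2_r(S)-(x_a^2-x_b^2)h^2_{r-1}(S\cup b)$ is independent of $x_a$, and then verifies the unsquared identity $h_r(S)-(x_a-x_b)h_{r-1}(S\cup b)=h_r(S\cup b - a)$ by noting both sides of the rearranged form $h_r(S)+x_b h_{r-1}(S\cup b)=h_r(S\cup b-a)+x_a h_{r-1}(S\cup b)$ equal $h_r(S\cup b)$. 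The two arguments rest on the same underlying facts (the remove-a-variable recurrence applied to remove $b$ and to remove $a$), but the paper's ``integration'' trick lets it work entirely with polynomial identities and never needs an explicit chain-rule computation; your route makes the $2x_a\partial_{y_a}$ bookkeeping explicit and is marginally longer. Both are clean and elementary, and neither has a gap; in particular your use of $r>1$ is sound (you only need $r-1\ge 1$ in the auxiliary steps, which it is).
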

\begin{proof}
    The right-hand side of the above equation equals $\partial_a\left((x_a^2-x_b^2)h_{r-1}^2(S\cup b)\right)$, so in order to show the difference of the two sides is $0$, it suffices to show $h^2_r(S)-(x_a^2-x_b^2)h_{r-1}^2(S\cup b)$ is independent of $x_a$. We will in fact show $$h_r(S)-(x_a-x_b)h_{r-1}(S\cup b)=h_r(S\cup b-a)$$whence the above claim will follow by replacing all $x_i$'s with $x_i^2$'s. This rearranges to $$h_r(S)+x_b h_{r-1}(S\cup b)=h_r(S\cup b-a)+x_ah_{r-1}(S\cup b)$$which is clear since both sides equal $h_r(S\cup b)$.
\end{proof}

To show that the variety cut out by $(p_{J,1},\ldots, p_{J,n})$ we will need to identify certain key elements in this ideal.
\begin{lemma}
     For $J\subseteq [n]$, define $\mathcal I_J=( p_{J,1},\ldots,p_{J,n})\subseteq \mathbb C[\mathbf x_n]$. Then we have $\partial_j h^2_{n-|J|+1}(J)\in \mathcal I_J$ for any $j\in J$.
\end{lemma}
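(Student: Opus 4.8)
The plan is to work inside $\mathbb C[\mathbf x_n]$ modulo $\mathcal I_J$ and use the preceding lemma --- the identity $\partial_a h^2_r(S)=(x_a^2-x_b^2)\,\partial_a h^2_{r-1}(S\cup b)+2x_a h^2_{r-1}(S\cup b)$ --- as a telescoping device. Starting from $J$ I will repeatedly enlarge the set toward $[n]$ by adjoining, one at a time, the elements of $[n]\setminus J$ in decreasing order; each step trades $\partial_j h^2_r(\text{current set})$ for $(x_j^2-x_b^2)\,\partial_j h^2_{r-1}(\text{bigger set})$ plus an error term which, crucially, is a scalar multiple of a generator of $\mathcal I_J$ and hence vanishes modulo $\mathcal I_J$.

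Concretely, set $k=|J|$ and list $[n]\setminus J=\{b_0>b_1>\cdots>b_{n-k-1}\}$ in decreasing order. For $0\le t\le n-k$ put $T_t=J\cup\{b_0,\ldots,b_{t-1}\}$ and $\rho_t=n-|T_t|+1=n-k-t+1$, so $T_0=J$ and $T_{n-k}=[n]$. Two bookkeeping facts drive the argument. First, since $b_0,\ldots,b_t$ are exactly the elements of $[n]\setminus J$ that are $\ge b_t$, we have $J\cup\{b_t,b_t+1,\ldots,n\}=T_{t+1}$, and therefore, straight from \eqref{pdefn}, $p_{J,b_t}=h^2_{\rho_{t+1}}(T_{t+1})\in\mathcal I_J$. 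Second, letting $\ell$ be the number of elements of $[n]\setminus J$ exceeding $j$, those elements are precisely $b_0,\ldots,b_{\ell-1}$, so (using $j\in J$) $J\cup\{j,j+1,\ldots,n\}=T_\ell$, and hence, again by \eqref{pdefn}, $p_{J,j}=\partial_j h^2_{\rho_\ell}(T_\ell)\in\mathcal I_J$. This last identification is the heart of the matter: the ``$\partial_j$-version'' of the target polynomial, evaluated on $T_\ell$ instead of on $J$, \emph{is} one of the generators of $\mathcal I_J$.

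I would then prove by induction on $t$, for $0\le t\le\ell$, the congruence
\begin{equation*}
\partial_j h^2_{n-k+1}(J)\ \equiv\ \Big(\prod_{s=0}^{t-1}(x_j^2-x_{b_s}^2)\Big)\,\partial_j h^2_{\rho_t}(T_t)\pmod{\mathcal I_J}.
\end{equation*}
The base case $t=0$ is a tautology, since $\rho_0=n-k+1$ and $T_0=J$. For the inductive step (valid for $t<\ell$), apply the preceding lemma with $S=T_t$, $a=j$, $b=b_t$, $r=\rho_t$; this is legitimate because $j\in J\subseteq T_t$, $b_t\notin T_t$, and $\rho_t=n-k-t+1\ge2$ whenever $t\le\ell-1\le n-k-1$. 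The lemma rewrites $\partial_j h^2_{\rho_t}(T_t)$ as $(x_j^2-x_{b_t}^2)\,\partial_j h^2_{\rho_{t+1}}(T_{t+1})+2x_j\,h^2_{\rho_{t+1}}(T_{t+1})$, and the second summand equals $2x_j\,p_{J,b_t}\in\mathcal I_J$; multiplying the inductive hypothesis by $(x_j^2-x_{b_t}^2)$ then advances the index. Taking $t=\ell$ and invoking $p_{J,j}=\partial_j h^2_{\rho_\ell}(T_\ell)$ yields $\partial_j h^2_{n-k+1}(J)\equiv\big(\prod_{s<\ell}(x_j^2-x_{b_s}^2)\big)\,p_{J,j}\equiv0\pmod{\mathcal I_J}$, which, as $n-k+1=n-|J|+1$, is precisely the claim.

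The only genuinely delicate part is the index bookkeeping: matching the sets $J\cup\{i,\ldots,n\}$ of \eqref{pdefn} with the chain $T_0\subset T_1\subset\cdots$, recognizing that the telescoping terminates exactly at $t=\ell$ on the generator $p_{J,j}$, and checking the degree bound $\rho_t\ge 2$ needed to apply the identity --- this bound is tight only when $t=n-k-1$ (where $\rho_t=2$), and since $\ell\le n-k$ the telescoping never reaches the forbidden value $\rho_{n-k}=1$. Once the indices are aligned, everything is mechanical; in the degenerate case $\ell=0$ (that is, $j$ larger than every element of $[n]\setminus J$) the target polynomial is literally $p_{J,j}$ and there is nothing to prove.
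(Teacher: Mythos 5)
Your proof is correct and follows essentially the same route as the paper: both telescope along the same chain of sets using the preceding identity lemma, trading $\partial_j h^2_r(\cdot)$ for $\partial_j h^2_{r-1}(\cdot\cup b)$ while absorbing the error $2x_j\,h^2_{r-1}(\cdot\cup b)=2x_j\,p_{J,b}$ into $\mathcal I_J$, and terminating on the generator $p_{J,j}=\partial_j h^2_{r_j}(J\cup\{j,\ldots,n\})$. The only cosmetic difference is direction: the paper fixes $k$ running upward from $j$ to $n$ and shows $\partial_j h^2_{r_k}(J\cup\{k,\ldots,n\})\in\mathcal I_J$ at each step, whereas you phrase the same telescope as a congruence that accumulates an explicit product $\prod(x_j^2-x_{b_s}^2)$.
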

\begin{proof}
    We use the notation $r_i$ as defined in $\ref{qdefn}$. Pick any $j\in J$; we have $p_{J,j}=\partial_j h^2_{r_j}(J\cup\{j,\ldots,n\})\in \mathcal I_J$. If $\{j,\ldots, n\}\subseteq J$, then we are already done. If not, we will induction to show that $\partial_j h^2_{r_k}(J\cup\{k,\ldots, n\})\in \mathcal I_J$ for every $j\le k\le n$. 

    The base case $k=j$ is what we already assumed; suppose the statement is true for a given $k<n$. If $k\in J$, then $J\cup \{k,\ldots, n\}=J\cup\{k+1,\ldots,n\}$ and $r_k=r_{k+1}$, so there is nothing to prove. Suppose instead that $k\notin J$, so that $r_{k+1}=r_k+1$. Since $\partial_j h^2_{r_k}(J\cup\{k,\ldots, n\})\in\mathcal I_J$, and $p_{J,k}=h^2_{r_k}(J\cup\{k,\ldots,n\})\in\mathcal I_J$, using Lemma 1 for $a=j$, $b=k$, $S=J\cup\{k+1,\ldots, n\}$ and $r=r_k+1$ implies $\partial_j h^2_{r_{k+1}}(J\cup\{k+1,\ldots, n\})\in\mathcal I_J$, finishing the induction step.

    In particular, we have $\partial_j h^2_{r_n}(J\cup n)\in \mathcal I_J$. If $n\in J$ then $r_n=n-|J|+1$ and $J\cup n=J$, so the lemma follows. If $n\notin J$, then $r_n=n-|J|$. Also, $p_{J,n}=h^2_{r_n}(J\cup n)\in \mathcal I_J$, so using Lemma 1 with $a=j$, $b=n$, $S=J$ and $r=r_n+1=n-|J|+1$, we conclude $\partial_j h^2_{n-|J|+1}(J)\in\mathcal I_J$ as before.
\end{proof}

Now we are ready to show the claimed regularity of the sequence $(p_{J,1},\ldots, p_{J,n})$.
\begin{lemma}\label{regularsequence}
    For any $J\subseteq[n]$, the sequence $(p_{J,1},\ldots, p_{J,n})$ is a regular sequence of polynomials.
\end{lemma}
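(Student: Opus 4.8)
The plan is to use the criterion recalled just above (a sequence of $n$ homogeneous polynomials of positive degree in $n$ variables is regular precisely when its common zero locus in $\mathbb C^n$ is $\{\mathbf 0\}$), so that it suffices to show $V(\mathcal I_J)=\{\mathbf 0\}$. Write $m=|J|$ and list $[n]\setminus J=\{c_1<\cdots<c_{n-m}\}$. I will show separately that every coordinate indexed by $J$, and then every coordinate indexed by $[n]\setminus J$, vanishes on $V(\mathcal I_J)$.

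The main step is the membership $x_j^{2n}\in\mathcal I_J$ for each $j\in J$, which I would obtain from the following stronger claim: for every nonempty $J'\subseteq J$ and every $a\in J'$,
$$h^2_{n-|J'|+1}(J')\in\mathcal I_J\qquad\text{and}\qquad \partial_a h^2_{n-|J'|+1}(J')\in\mathcal I_J.$$
This is proved by downward induction on $|J'|$. The base case $J'=J$: the membership of $\partial_a h^2_{n-|J|+1}(J)$ is exactly the lemma established just above, and then $h^2_{n-|J|+1}(J)\in\mathcal I_J$ follows from it by Euler's identity, since $h^2_{n-|J|+1}(J)$ is homogeneous of positive degree $2(n-|J|+1)$ in $\{x_j:j\in J\}$ and hence equals a nonzero scalar times $\sum_{j\in J}x_j\,\partial_j h^2_{n-|J|+1}(J)\in\mathcal I_J$. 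For the inductive step, given the claim for a set $J'$ with $|J'|\ge 2$, pick $b\in J'$ and $a\in J'\setminus b$ and apply the identity of Lemma~1 with $S=J'\setminus b$ and $r=n-|J'\setminus b|+1$: this writes $\partial_a h^2_{n-|J'\setminus b|+1}(J'\setminus b)$ as a $\mathbb C[\mathbf x_n]$-combination of $\partial_a h^2_{n-|J'|+1}(J')$ and $h^2_{n-|J'|+1}(J')$, both in $\mathcal I_J$ by hypothesis; a second use of Euler's identity then puts $h^2_{n-|J'\setminus b|+1}(J'\setminus b)$ in $\mathcal I_J$ as well. Since every $(|J'|-1)$-subset of $J$ arises as some $J'\setminus b$, the induction descends to $|J'|=1$, where $J'=\{j\}$ gives exactly $x_j^{2n}=h^2_n(\{j\})\in\mathcal I_J$; in particular $x_j=0$ on $V(\mathcal I_J)$.

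For the remaining coordinates, fix $\mathbf x\in V(\mathcal I_J)$, so $x_j=0$ for all $j\in J$ by the previous step. For $t=1,\dots,n-m$ one checks directly from \eqref{pdefn} that $p_{J,c_t}=h^2_t\big([n]\setminus\{c_1,\dots,c_{t-1}\}\big)$ (the exponent parameter $r_{c_t}$ works out to $t$, and $J\cup\{c_t,\dots,n\}=[n]\setminus\{c_1,\dots,c_{t-1}\}$). Evaluating this generator at $\mathbf x$ and discarding every monomial that involves a variable $x_j$ with $j\in J$ leaves $h_t(x_{c_t}^2,x_{c_{t+1}}^2,\dots,x_{c_{n-m}}^2)=0$. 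A downward induction on $t$ now finishes: the $t=n-m$ equation reads $x_{c_{n-m}}^{2(n-m)}=0$, hence $x_{c_{n-m}}=0$; substituting into the $t=n-m-1$ equation gives $x_{c_{n-m-1}}^{2(n-m-1)}=0$; and so on down to $t=1$, giving $x_{c_1}=0$. Thus $\mathbf x=\mathbf 0$, proving $V(\mathcal I_J)=\{\mathbf 0\}$.

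I expect the only genuinely delicate point to be the bookkeeping in the inductive step — keeping the set $J'\setminus b$, the exponent $n-|J'\setminus b|+1$, and the hypotheses of Lemma~1 aligned — together with checking that the degenerate endpoints are covered ($J=\emptyset$ makes the main step vacuous, and $J=[n]$ makes the final cascade vacuous). Everything after the membership $x_j^{2n}\in\mathcal I_J$ is a routine triangular cascade.
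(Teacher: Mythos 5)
Your proof is correct. You use the same variety criterion and the same triangular cascade for the coordinates indexed by $[n]\setminus J$, but for the coordinates indexed by $J$ you take a genuinely different route. The paper invokes Lemma~6.2 of Swanson--Wallach \cite{swanson2021harmonic} (with $m=2$) to conclude directly that the polynomials $\partial_j h^2_{n-|J|+1}(J)$, $j\in J$, have no nontrivial common zero in $\mathbb{C}^J$, and then combines this with the preceding unlabeled membership lemma to kill those coordinates. You instead extract the stronger conclusion $x_j^{2n}\in\mathcal I_J$ for each $j\in J$ by a self-contained downward induction on subsets $J'\subseteq J$: Euler's identity promotes ``all partials of $h^2_{n-|J'|+1}(J')$ lie in $\mathcal I_J$'' to ``$h^2_{n-|J'|+1}(J')$ itself lies in $\mathcal I_J$'', and the algebraic identity lemma (with $S=J'\setminus b$, $r=n-|J'|+2$) then propagates both memberships down to $J'\setminus b$, terminating at singletons. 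The bookkeeping checks out: $r>1$ is always satisfied since $|J'|\le n$, the scalar from Euler's identity is nonzero over $\mathbb{C}$, the base case $J'=J$ is exactly the paper's preceding lemma, and the degenerate endpoints $J=\emptyset$ and $J=[n]$ are handled as you note. What your approach buys is independence from the external Swanson--Wallach lemma, at the cost of a slightly longer induction; what the paper's approach buys is brevity by outsourcing the common-zero claim. Both are valid.
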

\begin{proof}
    It is clear that this is a sequence of homogeoneous polynomials of positive degree, so it suffices to show that variety $V(p_{J,1},\ldots, p_{J,n})$ in $\mathbb C^n$ equals the singleton $\{\mathbf 0\}$.

    We begin by noting that according to Lemma 6.2 in \cite{swanson2021harmonic} with $m=2$ and $j=n-|J|+1$, the polynomials $\partial_j h^2_{n-|J|+1}(J)$ for $j\in J$ have no non-trivial common zero in $\mathbb C^J$. So if $(a_1,\ldots, a_n)$ is any point in the variety $V(p_{J,1},\cdots, p_{J,n})$, the previous lemma implies that $a_j=0$ for all $j\in 0$. Setting $x_j=0$ for $j\in J$, the polynomials $P_{J,i}$ for $i\not\in J$ yield a triangular set of positive-degree polynomials in the variables $\{x_i:i\not\in J\}$, and this implies the result.
\end{proof}

The polynomials $p_{J,1},\ldots, p_{J,n}$ quotient out a space from $\mathbb C[\mathbf x_n]$ whose basis will help us construct to a spanning set $SR^B_n$. The property of $p_{J,i}$ that makes this work is the fact that superspace elements of the form $f\cdot\theta_J$ (for $f\in\mathbb C[\mathbf x_n]$) can be expanded in terms of bases of the aforementioned quotient spaces in a triangular fashion. Because of the regularity proved in Lemma \ref{regularsequence} and equation \eqref{hilbregularsequences}, these quotients will turn out to have predictable Hilbert series. The following lemma makes these claims explicit.

\begin{lemma}\label{triangularitylemma}
    For each $J\subseteq [n]$, there exists a set of $\mathcal B_n(J)$ of homogeneous polynomials in $\mathbb C[\mathbf x_n]$ with the following properties:
    \begin{itemize}
        \item We have the following equality:
        $$\sum_{m\in \mathcal B_n(J)}q^{\deg m}=[\stB_1(J)+1]_q\cdots [\stB_n(J)+1]_q;$$
        \item Given any polynomial $f\in\mathbb C[\mathbf x_n]$, we have the following expansion:
    $$f\cdot \theta_J=\left(\sum_{m\in\mathcal B_{n}(J)}c_{f,m}\cdot m\cdot\theta_J\right)+g+\Sigma$$where $c_{f,m}\in\mathbb C$, $g\in SI^B_n$, and $\Sigma\in\bigoplus_{J<_{\text{Gale}}K}\mathbb C[\mathbf x_n]\cdot \theta_K.$
    \end{itemize} 
\end{lemma}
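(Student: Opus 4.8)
The plan is to build $\mathcal B_n(J)$ directly out of the regular sequence $(p_{J,1},\ldots,p_{J,n})$. Concretely, set $\mathcal I_J=(p_{J,1},\ldots,p_{J,n})$ as in the preceding lemmas, fix a homogeneous $\mathbb C$-basis of the quotient ring $\mathbb C[\mathbf x_n]/\mathcal I_J$, and let $\mathcal B_n(J)$ be a choice of homogeneous polynomial lifts of these basis elements to $\mathbb C[\mathbf x_n]$. By Lemma~\ref{regularsequence} the sequence $(p_{J,1},\ldots,p_{J,n})$ is regular, so \eqref{hilbregularsequences} gives
\[
\operatorname{Hilb}\!\bigl(\mathbb C[\mathbf x_n]/\mathcal I_J;q\bigr)=\prod_{i=1}^n[\deg p_{J,i}]_q=\prod_{i=1}^n[\stB_i(J)+1]_q,
\]
where the last equality is the degree identity $\deg p_{J,i}=\stB_i(J)+1$ recorded just after \eqref{pdefn}. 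Since $\mathcal B_n(J)$ is in degree-preserving bijection with a homogeneous basis of the quotient, $\sum_{m\in\mathcal B_n(J)}q^{\deg m}$ is exactly this Hilbert series, which is the first bullet.

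For the second bullet, fix $f\in\mathbb C[\mathbf x_n]$. Because $\{m+\mathcal I_J:m\in\mathcal B_n(J)\}$ is a basis of the finite-dimensional vector space $\mathbb C[\mathbf x_n]/\mathcal I_J$, we may write $f=\sum_{m\in\mathcal B_n(J)}c_{f,m}\,m+h$ with $c_{f,m}\in\mathbb C$ and $h\in\mathcal I_J$; multiplying by $\theta_J$ then reduces the claim to showing $h\cdot\theta_J\in SI^B_n+\bigoplus_{J<_{\text{Gale}}K}\mathbb C[\mathbf x_n]\cdot\theta_K$. Write $h=\sum_{i=1}^n a_i p_{J,i}$ with $a_i\in\mathbb C[\mathbf x_n]$. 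By Lemma~\ref{leadingterm}, $q_{J,i}\in\bigoplus_{J\le_{\text{Gale}}K}\mathbb C[\mathbf x_n]\cdot\theta_K$ with $\theta_J$-component $\pm p_{J,i}\cdot\theta_J$, so we may write $p_{J,i}\cdot\theta_J=\pm q_{J,i}+\Sigma_i$ for suitable $\Sigma_i\in\bigoplus_{J<_{\text{Gale}}K}\mathbb C[\mathbf x_n]\cdot\theta_K$. Hence
\[
h\cdot\theta_J=\sum_{i=1}^n a_i p_{J,i}\cdot\theta_J=\Bigl(\sum_{i=1}^n (\pm a_i q_{J,i})\Bigr)+\Bigl(\sum_{i=1}^n a_i\Sigma_i\Bigr)=:g+\Sigma,
\]
and $g\in SI^B_n$ since each $q_{J,i}\in SI^B_n$ (Lemma~\ref{leadingterm}) and $SI^B_n$ is a two-sided ideal, while $\Sigma\in\bigoplus_{J<_{\text{Gale}}K}\mathbb C[\mathbf x_n]\cdot\theta_K$ because the latter is a $\mathbb C[\mathbf x_n]$-submodule of $\Omega_n$. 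This is precisely the asserted expansion.

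I do not expect a real obstacle: the two nontrivial inputs — regularity of $(p_{J,1},\ldots,p_{J,n})$ for the Hilbert-series count, and the Gale-triangularity of the $q_{J,i}$ — are already furnished by Lemmas~\ref{regularsequence} and \ref{leadingterm}, and the rest is bookkeeping. The only point deserving a word of care is homogeneity: the statement only asks for scalars $c_{f,m}$, but if one also wants the decomposition $f=\sum_m c_{f,m}m+h$ (hence $g$ and $\Sigma$) to respect the grading, one simply applies the argument in each homogeneous component of $f$ separately, which is legitimate since $\mathcal I_J$ is a homogeneous ideal and $\theta_J$ is homogeneous.
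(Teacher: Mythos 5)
Your proof is correct and follows the same route as the paper: take lifts of a homogeneous basis of $\mathbb C[\mathbf x_n]/(p_{J,1},\ldots,p_{J,n})$, invoke Lemma~\ref{regularsequence} and \eqref{hilbregularsequences} for the degree generating function, and then use Lemma~\ref{leadingterm} to replace each $p_{J,i}\cdot\theta_J$ by $\pm q_{J,i}$ plus Gale-higher terms. In fact you are slightly more careful than the paper's own write-up, which inadvertently calls the coefficients $a_i$ ``constants'' when they should of course be elements of $\mathbb C[\mathbf x_n]$ as you correctly state.
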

\begin{proof}
    Lemma \ref{regularsequence} and equation \eqref{hilbregularsequences} imply that the Hilbert series of the quotient ring $\mathbb C[\mathbf{x}_n]/(p_{J,1},\ldots,p_{J,n})$ is given by $[\deg p_{J,1}]_q\cdots [\deg p_{J,n}]_q$. Therefore, for any $J$, there exists a set $\mathcal B_n(J)$ of homogeneous polynomials in $\mathbb C[\mathbf{x}_n]$ that descends to a basis of $\mathbb C[\mathbf x_n]/(p_{J,1},\ldots, p_{J,n})$, so that their degree generating function is given by $$\sum_{m\in \mathcal B_n(J)}q^{\deg m}=[\deg p_{J,1}]_q\cdots [\deg p_{J,n}]_q=[\stB_1(J)+1]_q\cdots [\stB_n(J)+1]_q.$$
    
    For any $f\in\mathbb C[\mathbf x_n]$, we have $$f=\left(\sum_{m\in\mathcal B_{n}(J)}c_{f,m}\cdot m\right)+\sum_{1\le i\le n}a_ip_{J,i}$$for some constants $c_{f,m}$ and $a_i$. Multiplying by $\theta_J$ yields $$f\cdot\theta_J=\left(\sum_{m\in\mathcal B_{n}(J)}c_{f,m}\cdot m\cdot\theta_J\right)+\sum_{1\le i\le n}a_ip_{J,i}\cdot\theta_J.$$But we have seen before in Lemma \ref{leadingterm} that $p_{J,i}\cdot\theta_J$ is the Gale-leading term of $q_{J,i}$ (up to sign), which means the above can be rewritten as $$f\cdot\theta_J=\left(\sum_{m\in\mathcal B_{n}(J)}c_{f,m}\cdot m\cdot\theta_J\right)+\sum_{1\le i\le n}\pm a_iq_{J,i}+\Sigma$$where $\Sigma\in\bigoplus_{J<_{\text{Gale}}K}\mathbb C[\mathbf x_n]\cdot\theta_K$. Finally, $q_{J,i}\in SI^B_n$ from Lemma \ref{leadingterm}, so letting $g=\sum_{1\le i\le n}\pm a_iq_{J,i}$ as in the above sum, we obtain the desired expansion.
\end{proof}
We are now in a position to come up with a spanning set for $SR^B_n$ whose degree generating function will be easy to compute.
\begin{lemma}\label{hilbertspanning}
    The set $$\mathcal B_n=\bigsqcup_{J\subseteq[n]}\mathcal B_n(J)\cdot \theta_J$$descends to a spanning set of $\Omega_n/SI^B_n=SR^B_n$.
\end{lemma}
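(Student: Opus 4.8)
The plan is to prove the equivalent statement $\Omega_n=\text{Span}_{\mathbb C}(\mathcal B_n)+SI^B_n$, which immediately gives that $\mathcal B_n$ descends to a spanning set of the quotient $SR^B_n$. Since every element of $\Omega_n$ is a finite $\mathbb C$-linear combination of terms $f\cdot\theta_J$ with $f\in\mathbb C[\mathbf x_n]$ and $J\subseteq[n]$, it suffices to show that each such $f\cdot\theta_J$ lies in $\text{Span}_{\mathbb C}(\mathcal B_n)+SI^B_n$.

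I would establish this by Noetherian (downward) induction on $J$ with respect to the Gale order, carried out separately within each fermionic degree $k=|J|$. The mechanism is the expansion furnished by Lemma \ref{triangularitylemma}: for any $f\in\mathbb C[\mathbf x_n]$,
$$f\cdot\theta_J=\left(\sum_{m\in\mathcal B_n(J)}c_{f,m}\cdot m\cdot\theta_J\right)+g+\Sigma,$$
where $c_{f,m}\in\mathbb C$, $g\in SI^B_n$, and $\Sigma\in\bigoplus_{J<_{\text{Gale}}K}\mathbb C[\mathbf x_n]\cdot\theta_K$. The crucial observation is that the Gale order only compares subsets of equal cardinality, so every $K$ occurring in $\Sigma$ satisfies $|K|=|J|=k$; thus $\Sigma$ is a $\mathbb C[\mathbf x_n]$-linear combination of $\theta_K$'s with $K$ strictly larger than $J$ in the Gale order and of the same size. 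Restricted to size-$k$ subsets, the Gale order is a finite poset with unique maximum $\{n-k+1,\dots,n\}$, so downward induction along it is well-founded.

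For the base case $J=\{n-k+1,\dots,n\}$ there is no $K>_{\text{Gale}}J$, hence $\Sigma=0$ and $f\cdot\theta_J=\sum_m c_{f,m}\,m\cdot\theta_J+g$ with $g\in SI^B_n$, which already lies in $\text{Span}_{\mathbb C}(\mathcal B_n)+SI^B_n$. For the inductive step, write $\Sigma=\sum_{K>_{\text{Gale}}J}h_K\cdot\theta_K$ for finitely many $h_K\in\mathbb C[\mathbf x_n]$; by the induction hypothesis each $h_K\cdot\theta_K$ lies in $\text{Span}_{\mathbb C}(\mathcal B_n)+SI^B_n$, and since $g\in SI^B_n$ and $\sum_m c_{f,m}\,m\cdot\theta_J\in\text{Span}_{\mathbb C}(\mathcal B_n)$, we conclude $f\cdot\theta_J\in\text{Span}_{\mathbb C}(\mathcal B_n)+SI^B_n$ as well, completing the induction. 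There is no serious obstacle here: the only point requiring care is that the induction must be organized within a fixed fermionic degree and in the downward direction along the Gale order, which is precisely the structure that the Gale-leading-term computations of Lemma \ref{leadingterm} and the triangular expansion of Lemma \ref{triangularitylemma} were designed to provide. The degree generating function of $\mathcal B_n$ (and hence the resulting upper bound on $\operatorname{Hilb}(SR^B_n;q,z)$) is then a bookkeeping consequence of the first bullet of Lemma \ref{triangularitylemma} together with Lemma \ref{combidentity}, to be recorded separately.
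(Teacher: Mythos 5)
Your argument is correct and is essentially the paper's proof: the paper picks a Gale-maximal failing subset $J$ and derives a contradiction from the triangular expansion of Lemma \ref{triangularitylemma}, which is the contrapositive formulation of your downward Noetherian induction along the Gale order. Your version is a bit more explicit about restricting to a fixed fermionic degree and identifying the unique Gale-maximum $\{n-k+1,\dots,n\}$ as the base case, but the underlying mechanism and the reliance on Lemma \ref{triangularitylemma} are identical.
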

\begin{proof}
    If not, pick a Gale-maximal subset $J\subseteq [n]$ so that $f\cdot \theta_J$ does not belong to the span of $\mathcal B_J$ modulo $SI^B_n$. However, accoding to the previous lemma, $$f\cdot \theta_J\equiv \left(\sum_{m\in\mathcal B_{n}(J)}c_{f,m}\cdot m\cdot\theta_J\right)+\Sigma\pmod{SI^B_n}$$where $\Sigma$ is a linear combination of terms of the form $g\cdot\theta_K$ for $J<_{\text{Gale}}K$. Now the terms $c_{f,m}\cdot m\cdot \theta_J$ lie in the span of $\mathcal B_n$ by definition, and $\Sigma$ lies in the span of $\mathcal B_n$ by the maximality of $J$, which contradicts our assumption.
\end{proof}
Therefore we have, by an application of Lemma \ref{combidentity},
\begin{equation}\label{upperbound}
    \operatorname{Hilb}(SR^B_n;q,z)\le \sum_{J\subseteq [n]}z^{|J|}\sum_{m\in \mathcal B_n(J)}q^{\deg m}=\sum_{k=0}^n [2k]!!_q\cdot \operatorname{Stir}^B_q(n,k)\cdot z^{n-k}.
\end{equation}

\section{A characterization of colon ideals\label{acharacterizationofcolonideals}}
In order to prove that the claimed Hilbert series is also a lower bound for $\operatorname{Hilb}(SR^B_n;q,z)$, we will need to produce sufficiently many linearly independent elements of $SR^B_n$, or equivalently, sufficiently many linearly independent elements of the superharmonic space $SH^B_n$. This will be done in Section \ref{operatortheoremlowerbound}. In preparation for that, we will need to reframe the ideals $(p_{J,1},\ldots, p_{J,n})$ encountered in Section \ref{upperbound} as colon ideals, which we do in this section.

To this end, let us define some more objects. Define a matrix of polynomials 
\begin{equation}
\mathcal H:=\left(h^2_{i-j}(\{i,\ldots, n\})\right)_{\substack{1\le i\le n\\1\le j\le n}}.   
\end{equation}
The notation $h^2_r(S)$ was introduced in Section \ref{upperboundsection}, it refers to the polynomial obtained from the degree $r$ homogeneous symmetric function by setting the variables whose indices do not occur in $S$ to $0$, and replacing the remaining variables by their squares. For the above definition, we extend the definition slightly by setting $h^2_0(S)=1$ and $h^2_r(S)=0$ for any set $S\subseteq[n]$ and any integer $r<0$. As an example, for $n=3$ we have $$\mathcal H=\begin{pmatrix}
    1 & 0 &0\\
    x_2^2+x_3^2& 1 & 0\\
    x_3^4 & x_3^2 & 0
\end{pmatrix}.$$  For $J\subseteq [n]$, define the operators $\mathfrak D_J$ acting on a superspace element $f\in\Omega_n$ as follows: 
\begin{equation}
   \mathfrak D_J(f):=\sum_{|I|=|J|}(-1)^{\sum I}\Delta_{[n]-J,([n]-I)^*}(\mathcal H)\odot d_{2I-1}(f) 
\end{equation}
where $K^*$ denotes reversal: $$K^*:=\{n+1-k:k\in K\}$$ and $2I-1$ denotes $\{2i-1:i\in I\}$. The notation $\Delta_{([n]-J,([n]-I)*}(\mathcal H)$ therefore denotes the matrix minor of $\mathcal H$ with row set $[n]-J$ and column set $([n]-I)*$. 

As an example, let us compute $\mathfrak D_J$ in the case $n=3$ (using $\mathcal H$ seen above) with $J=\{1,3\}$. The possible $I$'s that appear in the sum are $\{1,2\}$, $\{2,3\}$ and $\{1,3\}$. For $I=\{1,2\}$, the row set is $[3]-J=\{2\}$,  the column set is $([3]-I)*=\{3\}^*=\{1\}$, and $2I-1=\{1,3\}$, which leads to the term $$-\Delta_{2,1}(\mathcal H)\cdot d_{13}(f).$$Doing this for each $I$ leads to $$\mathfrak D_{\{1,3\}}(f)=-\Delta_{2,1}(\mathcal H)\cdot d_{13}(f)+\Delta_{2,2}(\mathcal H)\cdot d_{15}(f)-\Delta_{2,3}(\mathcal H)\cdot d_{35}(f).$$Using the appropriate entries from $\mathcal H$ and using $d_i(f)=(x_1^i\odot f)\cdot\theta_1+(x_2^i\odot f)\cdot \theta_2+(x_3^i\odot f)\cdot \theta_3$, the above simplifies to $$\mathfrak D_{\{1,3\}}(f)=(x_1^3x_2^3 + x_1^3x_2x_3^2 - x_1^5x_2-x_1x_2^3x_3^2)\odot f\cdot\theta_1\theta_2-x_1(x_1^2-x_2^2)(x_1^2-x_3^2)x_3\odot f\cdot\theta_1\theta_3.$$

For notational ease, if we let $$\mathfrak F_{J,K}:=\sum_{|I|=|J|=|K|}(-1)^{\sum I}\Delta_{[n]-J,([n]-I)^*}(\mathcal H)\cdot \left|x_k^{2i-1}\right|_{k\in K,i\in I}$$ for subsets $J,K$ of $[n]$, then the above definition can be rewritten into$$\mathfrak D_J(f)=\sum_{|K|=|J|}(\mathfrak F_{J,K}\odot f)\times \theta_K.$$ 

The $\mathfrak D_J$'s defined above may appear complicated, but they satisfy some very convenient properties: they act on $f$ in a Gale-triangular fashion, and the results have simple Gale leading terms. This is going to be useful later for algebraic manipulations, as the triangularity causes many terms to cancel out. We also note that $\mathcal H$ and $\mathfrak D_J$ defined above are in fact natural type B analogs of the corresponding objects defined and used in \cite{rhoades2024hilbert}.

Before we prove these properties of $\mathfrak D_J$, it will be useful to obtain a description of the $\mathfrak F_{J,K}$'s in terms of certain matrix determinants.
\begin{lemma}
    Let $J=\{j_1<\cdots<j_r\}$ and $K=\{k_1<\cdots<k_r\}$ be two subsets of $[n]$ and let $b(J)=(b(J)_1<b(J)_2<\cdots)$ be the entries in $[n]-J$. Define $$A_{J,K}=\begin{pmatrix}
    B_{J,K}\\ C_{J,K}
\end{pmatrix}$$ where $B_{J,K}$ be a $r\times n$ matrix given by $$B_{J,K}=\begin{pmatrix}
    x_{k_1}^{2n-1} & \cdots & x_{k_1}^{2\cdot 1-1}\\
    \vdots & & \vdots\\
    x_{k_r}^{2n-1} &\cdots & x_{k_r}^{2\cdot 1-1}
\end{pmatrix}$$ and $C_{J,K}$ is a $(n-r)\times n$ matrix given by $$C_{J,K}=(h^2_{b(J)_i-j}(x_{b(J)_i},x_{b(J)_i+1},\ldots, x_n))_{\substack{1\le i\le n-r\\1\le j \le n}}.$$Then we have $\mathfrak F_{J,K}=\pm \det(A_{J,K}).$
\end{lemma}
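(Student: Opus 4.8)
The plan is to establish the identity $\mathfrak F_{J,K} = \pm\det(A_{J,K})$ by a Laplace (cofactor) expansion of $\det(A_{J,K})$ along the first $r$ rows, i.e.\ along the rows coming from the block $B_{J,K}$. Recall that the generalized Laplace expansion of the determinant of the $n\times n$ matrix $A_{J,K}$ along its top $r$ rows reads
\begin{equation}
\det(A_{J,K}) = \sum_{|S|=r} \pm \det\bigl((B_{J,K})_{[r],S}\bigr)\cdot \det\bigl((C_{J,K})_{[n-r],\,[n]\setminus S}\bigr),
\end{equation}
the sum being over $r$-element column subsets $S\subseteq\{1,\dots,n\}$ and the sign being the usual Laplace sign $(-1)^{\sum(\text{rows})+\sum(\text{cols})}$. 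The first step is to match the two families of minors appearing here with the two families appearing in the definition of $\mathfrak F_{J,K}$. Since the columns of $B_{J,K}$ are indexed so that column $j$ carries the exponent $2(n+1-j)-1$, a column subset $S$ corresponds bijectively to a subset $I\subseteq[n]$ via $S = ([n]-I)^\ast$ is not quite it --- more precisely, writing the columns of $B_{J,K}$ in the order $2n-1, 2n-3,\dots,1$, selecting columns $S$ picks exponents $\{2i-1 : i\in I\}$ for the set $I$ determined by $S$, so $\det\bigl((B_{J,K})_{[r],S}\bigr) = |x_k^{2i-1}|_{k\in K, i\in I}$ exactly the Vandermonde-type minor in the definition of $\mathfrak F_{J,K}$. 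The complementary column set $[n]\setminus S$ then selects from $C_{J,K}$ precisely the minor $\Delta_{[n]-J,\,([n]-I)^\ast}(\mathcal H)$: indeed the rows of $C_{J,K}$ are indexed by $b(J) = [n]-J$, and its $(i,j)$ entry $h^2_{b(J)_i-j}(x_{b(J)_i},\dots,x_n)$ is precisely the $(b(J)_i,j)$ entry of $\mathcal H$, so the column set $[n]\setminus S$ of $C_{J,K}$ is the column set $([n]-I)^\ast$ of $\mathcal H$ restricted to rows $[n]-J$.

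The second step is the bookkeeping of signs. On the $\mathfrak F_{J,K}$ side the sign attached to $I$ is $(-1)^{\sum I}$; on the $\det(A_{J,K})$ side the sign attached to $S$ is the Laplace sign $(-1)^{1+2+\cdots+r + \sum_{s\in S}s} = (-1)^{\binom{r+1}{2}+\sum S}$. So I need to check that $(-1)^{\sum S}$ and $(-1)^{\sum I}$ agree up to a factor that is independent of $I$ (equivalently of $S$). With the column ordering above, $s\in S$ and $i\in I$ are related by $s = n+1-i$ under the reversal $([n]-I)^\ast$ bookkeeping; concretely $\sum_{s\in S} s = \sum_{i\in I}(n+1-i) = r(n+1) - \sum I$, hence $(-1)^{\sum S} = (-1)^{r(n+1)}(-1)^{\sum I}$, and $r(n+1)$ does not depend on $I$. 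Combining with the fixed factor $(-1)^{\binom{r+1}{2}}$ shows the per-$I$ signs agree up to the global sign $(-1)^{\binom{r+1}{2}+r(n+1)}$, which is the $\pm$ in the statement.

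The only genuinely delicate point --- and the one I would spend the most care on --- is pinning down the column indexing conventions so that the reversal $K^\ast = \{n+1-k : k\in K\}$ and the ``$2i-1$'' exponents line up consistently across the three objects $\mathcal H$, $B_{J,K}$, and the Vandermonde minors $|x_k^{2i-1}|$; a single off-by-one in whether column $j$ of $\mathcal H$ or of $B_{J,K}$ is counted from the left or the right will flip which minors are complementary and potentially change the overall sign. The safest route is to fix, once and for all, the bijection $I\leftrightarrow S$ by $S=\{\,n+1-i : i\in I\,\}$, verify directly from the displayed formula for $\mathfrak D_J$ (and from the definition of $\mathcal H$) that under this bijection $\Delta_{[n]-J,([n]-I)^\ast}(\mathcal H)$ is the $(C_{J,K})$-minor on column set $[n]\setminus S$ and $|x_k^{2i-1}|_{k\in K,i\in I}$ is the $(B_{J,K})$-minor on column set $S$, and then invoke the generalized Laplace expansion verbatim. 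Everything else is a routine verification that the two sums are term-by-term equal up to a global sign. Since the statement only claims equality up to sign, I would not track the global sign beyond confirming it is $I$-independent.
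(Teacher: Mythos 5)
Your proposal is correct and follows essentially the same route as the paper: both proofs rest on the generalized Laplace expansion of $\det(A_{J,K})$ along its top $r$ rows, followed by matching the $B_{J,K}$-minors with the Vandermonde factors $|x_k^{2i-1}|$ and the complementary $C_{J,K}$-minors with the $\mathcal H$-minors $\Delta_{[n]-J,([n]-I)^*}(\mathcal H)$. In fact your proposal is somewhat more explicit than the paper, which simply states the Laplace expansion formula and cites the analogous type-A lemma in Rhoades--Wilson without spelling out the column-index bijection $I\leftrightarrow S=I^*$ or the $I$-independence of the sign, both of which you verify carefully.
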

\begin{proof}
    Similar to Lemma 4.5 in \cite{rhoades2024hilbert}, this follows from the the definition of $\mathfrak F_{J,K}$ and the following expansion of the determinant: $$\det(A_{J,K})=\sum_{\substack{I\subset[n]\\|I|=r}}(-1)^{\sum I-\binom{r+1}2}\cdot \Delta_I(B_{J,K})\cdot\Delta_{[n]-I}(C_{J,K})$$where for any matrix $M$, the notation $\Delta_I(M)$ denotes the maximal minor of $M$ with column set $I$.
\end{proof}
For any subset $J\subseteq [n]$, define $f_J\in \mathbb C[\mathbf x_n]$ as $$f_J=\prod_{j\in J}x_j\prod_{j<i\le n}(x_j^2-x_i^2).$$ 
\begin{lemma}\label{GaletriangularityLemma}
    We have $\mathfrak F_{J,K}=0$ unless $J\ge_{\text{Gale}} K$. Furthermore, $$\mathfrak F_{J,J}=\pm f_J.$$
\end{lemma}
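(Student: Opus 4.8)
The plan is to analyze the determinant $\det(A_{J,K})$ furnished by the previous lemma, exploiting the block structure $A_{J,K} = \begin{pmatrix} B_{J,K} \\ C_{J,K}\end{pmatrix}$. First I would record the crucial observation that $C_{J,K}$ does not depend on $K$ at all: its rows are indexed by $[n]-J$, with the $i$-th row being $(h^2_{b(J)_i - j}(x_{b(J)_i}, \ldots, x_n))_{1 \le j \le n}$. This row is supported on columns $j$ with $j \le b(J)_i$ (since $h^2_r = 0$ for $r < 0$ and $h^2_0 = 1$), so $C_{J,K}$ is "staircase-supported": row $i$ is zero in all columns $j > b(J)_i$. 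This upper-triangular-type support is what will force the Gale condition.

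The key steps, in order: (1) Expand $\det(A_{J,K})$ by choosing, for each subset $I \subseteq [n]$ with $|I| = r$, the minor $\Delta_I(B_{J,K}) \cdot \Delta_{[n]-I}(C_{J,K})$, exactly as in the displayed Laplace expansion in the lemma above. (2) Show that $\Delta_{[n]-I}(C_{J,K})$ vanishes unless the complementary column set $[n]-I = \{c_1 < \cdots < c_{n-r}\}$ is "compatible" with the staircase, i.e. $c_i \le b(J)_i$ for all $i$; this is the standard fact that a generalized-triangular matrix has a nonzero maximal minor only on column sets dominated by the support pattern. (3) Translate this compatibility into a statement about $I$ itself: writing $I = \{i_1 < \cdots < i_r\}$, the condition that $[n]-I$ be dominated by $(b(J)_1, \ldots, b(J)_{n-r})$ is equivalent (by a complementation/Gale-duality argument on subsets of $[n]$) to $I \ge_{\text{Gale}} J$ — here I'd use that the Gale order on $r$-subsets is the order dual to the dominance order on their complements. (4) Now for $\mathfrak F_{J,K}$, recall $\mathfrak F_{J,K} = \pm\det(A_{J,K})$ and that in the original definition $\Delta_I(B_{J,K})$ is (up to sign) the Vandermonde-type minor $|x_k^{2i-1}|_{k \in K, i \in I}$, which is divisible by $\prod_{k \in K} x_k$ and by $\prod_{k < k' \in K}(x_k^2 - x_{k'}^2)$, hence vanishes identically unless... no — rather, the point is that the surviving terms have $I \ge_{\text{Gale}} J$, but by symmetry of the construction (the roles of the "row-staircase" $J$ and the power-set $I$), combined with the constraint $|K| = |J|$, one shows the whole sum vanishes unless $J \ge_{\text{Gale}} K$. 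Concretely, I expect the cleanest route is: the nonzero contributions require both $I \ge_{\text{Gale}} J$ and, from nonvanishing of $\Delta_I(B_{J,K})$ as a function relating $I$ and $K$, a dual constraint; chasing these through yields $K \le_{\text{Gale}} J$.

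For the second statement, $\mathfrak F_{J,J} = \pm f_J$: when $K = J$, the staircase forces a unique surviving term in the Laplace expansion — the one where $[n] - I$ is the "tight" column set matching the staircase of $C_{J,J}$ exactly, equivalently $I = J$. For that term, $\Delta_{[n]-J}(C_{J,J})$ is a product of the diagonal-type entries, each a leading $h^2$ which one computes to be $\pm 1$ (or an explicit monomial), so it contributes $\pm 1$; and $\Delta_J(B_{J,J}) = \pm|x_j^{2i-1}|_{i,j \in J}$, which is the type $B$ Vandermonde $\pm \prod_{j \in J} x_j \prod_{j < i,\, i,j \in J}(x_j^2 - x_i^2)$. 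Wait — $f_J = \prod_{j \in J} x_j \prod_{j < i \le n}(x_j^2 - x_i^2)$ ranges over all $i \le n$, not just $i \in J$, so the remaining Vandermonde factors $\prod_{j \in J,\, i \notin J,\, i > j}(x_j^2 - x_i^2)$ must come out of $\Delta_{[n]-J}(C_{J,J})$ rather than being $\pm 1$; so step (4)-analog here requires actually evaluating that minor of $C_{J,J}$, which by the staircase structure is lower/upper triangular with $h^2_0 = 1$ on the relevant diagonal but with off-diagonal entries contributing the cross terms — I would evaluate it by row-reducing using the identity $h_r(S) + x_b h_{r-1}(S \cup b) = h_r(S \cup b)$ (the identity proved in the lemma before Lemma~\ref{regularsequence}), telescoping to extract exactly $\prod (x_j^2 - x_i^2)$ over $j \in J$, $i \notin J$, $i > j$.

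I expect the main obstacle to be step (3)–(4): cleanly packaging the double triangularity — one staircase coming from $C_{J,K}$ (governed by $J$) and the Vandermonde vanishing governed by $K$ — into the single clean statement "$\mathfrak F_{J,K} = 0$ unless $J \ge_{\text{Gale}} K$," and keeping the Gale-order/complementation bookkeeping straight (it is easy to get the direction of the inequality backwards). The analogous result is \cite[Lem. 4.6]{rhoades2024hilbert} in type A, so I would follow that argument closely, with $x_k^i$ replaced throughout by $x_k^{2i-1}$ and $h_r$ by $h^2_r$, and check that the parity substitution $i \mapsto 2i-1$ does not disturb any of the triangularity or telescoping identities (it does not, since only the relative order of exponents matters).
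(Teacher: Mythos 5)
Your high-level instinct is right --- the lemma does reduce to the type A result of Rhoades--Wilson --- but the mechanism you propose for the reduction is not quite a valid algebraic operation, and the paper's actual reduction is both simpler and cleaner than the direct Laplace-expansion route you sketch.

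The paper's proof is a one-liner: multiply the $i$-th row of $B_{J,K}$ by $x_{k_i}$, so that all exponents in the Vandermonde block become even ($x_{k_i}^{2n},\dots,x_{k_i}^2$). Since $h^2_r$ is by definition $h_r$ evaluated at the squares, the resulting matrix $A'_{J,K}$ is \emph{literally} the type A matrix from Rhoades--Wilson under the ring homomorphism $y_i\mapsto x_i^2$. The determinant identity therefore transports verbatim, giving $\det A'_{J,K}=0$ unless $J\ge_{\text{Gale}}K$ and $\det A'_{J,J}=\pm\prod_{j\in J}x_j^2\prod_{j<i\le n}(x_j^2-x_i^2)$; dividing back by $\prod_{j\in J}x_j$ yields $f_J$. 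Your proposed substitution ``replace $x_k^i$ by $x_k^{2i-1}$ throughout'' is not a ring homomorphism, so it does not automatically preserve determinantal identities, linear dependences, or the telescoping identity you later invoke; you would have to re-derive the entire type A proof with the shifted exponents, which is exactly what the row-scaling trick avoids. The remark that ``only the relative order of exponents matters'' is true for the Gale triangularity of the minors, but not for the product evaluation of $\det A_{J,J}$.

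There is also a concrete error in your treatment of $\mathfrak F_{J,J}$. You claim there is a unique surviving term in the Laplace expansion (the one with $I=J$), and when you notice that $\Delta_J(B_{J,J})$ only produces the Vandermonde factors indexed by pairs inside $J$, you suggest the missing factors $\prod_{j\in J,\, i\notin J,\, i>j}(x_j^2-x_i^2)$ must come out of $\Delta_{[n]-J}(C_{J,J})$. But that minor is lower unitriangular (diagonal entries are $h^2_0=1$, entries above the diagonal vanish by the staircase), so it equals $\pm 1$ and contributes nothing. In fact several terms $I\ge_{\text{Gale}}J$ survive and their sum produces $f_J$ --- already visible for $n=2$, $J=K=\{1\}$, where $\det A_{J,J}=x_1^3-x_1x_2^2$ comes from two Laplace terms, not one. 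Relatedly, in step (4) you correctly observe that $C_{J,K}$ is independent of $K$, so the staircase constraint alone cannot yield the condition $J\ge_{\text{Gale}}K$; the ``dual constraint from nonvanishing of $\Delta_I(B_{J,K})$'' that you hope for does not exist term-by-term (those Vandermonde minors are generically nonzero), and the vanishing is a statement about the whole sum. This is precisely the delicate cancellation the row-scaling plus $x\mapsto x^2$ reduction lets you inherit for free from type A rather than reprove.
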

\begin{proof}
    Let $K=\{k_1<\cdots<k_r\}$. Multiplying the $i^{\text{th}}$ row of $A_{J,K}$ with $x_{k_i}$, the given statement reduces to proving $$A'_{J,K}=0$$ unless $J\ge_{\text{Gale}}K$ and $A'_{J,J}=\pm \prod_{j\in J}x_j^2\prod_{j<i\le n}(x_j^2-x_i^2)$, where $$A'_{J,K}=\begin{pmatrix}
        x_{k_1}^{2n} & \cdots &x_{k_1}^2\\
        \vdots & &\vdots\\
        x_{k_r}^{2n}&\cdots&x_{k_1}^2\\
        &C_{J,K}
    \end{pmatrix}.$$However, this can be deduced by replacing $x_i$'s with $x_i^2$ in Lemma 4.8 in \cite{rhoades2024hilbert}.
\end{proof}

We are finally ready to describe the ideals $(p_{J,1},\ldots, p_{J,n})$ as colon ideals $(I^B_n:f_J)$. Recall that $I^B_n$ denotes the ideal generated $\mathfrak B_n$-invariants in $\mathbb C[\mathbf x_n]$ with vanishing constant term. To prove this equality of colon ideals,, we will need the following commutative algebra result:

\begin{lemma}[\cite{abe2020hessenberg}, Lemma 2.4]\label{commalg}
     Suppose $\mathfrak a, \mathfrak a'\subset  \mathbb{C}[\mathbf x_n]$ are homogeneous ideals and $f \in \mathbb{C}[\mathbf x_n]$ is a homogeneous polynomial of degree $k$ with $f \not\in\mathfrak  a$. Suppose
$\mathfrak a' \subseteq (\mathfrak a:f )$. If $\mathbb{C}[\mathbf x_n]/{\mathfrak a'}$ is a Poincar\'e duality algebra of socle degree $r$ and $\mathbb{C}[\mathbf x_n]/{\mathfrak a}$ is a Poincar\'e
duality algebra of socle degree $r + k$, then $\mathfrak a' = (\mathfrak a : f )$.
\end{lemma}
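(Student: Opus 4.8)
The plan is to show that the containment $\mathfrak a'\subseteq(\mathfrak a:f)$ induces an \emph{isomorphism} on quotients, i.e.\ that the natural graded surjection $\pi\colon \mathbb C[\mathbf x_n]/\mathfrak a'\twoheadrightarrow \mathbb C[\mathbf x_n]/(\mathfrak a:f)$ has trivial kernel. Write $A=\mathbb C[\mathbf x_n]/\mathfrak a$, $A'=\mathbb C[\mathbf x_n]/\mathfrak a'$, $B=\mathbb C[\mathbf x_n]/(\mathfrak a:f)$, and let $\bar f\in A_k$ be the image of $f$, which is nonzero precisely because $f\notin\mathfrak a$. The first observation is the elementary identity $(\mathfrak a:f)/\mathfrak a=\operatorname{Ann}_A(\bar f)$, so that $B\cong A/\operatorname{Ann}_A(\bar f)$ and multiplication by $\bar f$ furnishes a graded isomorphism of $A$-modules $B(-k)\xrightarrow{\ \sim\ }\bar f A$. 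All of $A$, $A'$, $B$ are finite-dimensional ($A,A'$ are Poincar\'e duality algebras by hypothesis, $B$ is a quotient of $A'$), so Hilbert series make sense.

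The heart of the argument is to pin down the Hilbert series of $B$ using Poincar\'e duality in $A$. Since $A$ has socle degree $s:=r+k$, the multiplication pairing $A_i\times A_{s-i}\to A_s\cong\mathbb C$ is perfect. Applying this to the principal ideal $\bar f A$ shows that $\operatorname{Ann}_A(\bar f)_j$ is exactly the orthogonal complement of $(\bar f A)_{s-j}$ inside $A_j$, hence $\dim\operatorname{Ann}_A(\bar f)_j=\dim A_j-\dim(\bar f A)_{s-j}$. Combining this with $\dim B_j=\dim A_j-\dim\operatorname{Ann}_A(\bar f)_j$ and with the degree shift $\dim(\bar f A)_{s-j}=\dim B_{s-j-k}=\dim B_{r-j}$ gives $\dim B_j=\dim B_{r-j}$ for every $j$. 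In particular $B$ is concentrated in degrees $0,\dots,r$ with $\dim B_r=\dim B_0=1$ (the latter because $(\mathfrak a:f)\neq\mathbb C[\mathbf x_n]$, again since $f\notin\mathfrak a$); equivalently $H_B(q)=q^{\,r}H_B(1/q)$, so $B$ has the same top nonzero degree as $A'$.

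To conclude, I would invoke the standard fact that in a graded Artinian Gorenstein algebra every nonzero homogeneous ideal contains the socle. Here $A'$, being a Poincar\'e duality algebra of socle degree $r$, is such an algebra with one-dimensional socle $A'_r$. The kernel of $\pi$ is the homogeneous ideal $(\mathfrak a:f)/\mathfrak a'$ of $A'$. In degree $r$ the map $\pi$ carries $A'_r$ onto $B_r$, and by the previous paragraph both are one-dimensional, so $\pi$ is injective in degree $r$ and $(\ker\pi)_r=0$. An ideal whose degree-$r$ component vanishes cannot contain the socle $A'_r$, so $\ker\pi=0$, i.e.\ $\mathfrak a'=(\mathfrak a:f)$.

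The step I expect to require the most care is the Hilbert-series computation in the second paragraph: one must apply Poincar\'e duality of $A$ in precisely the degree $j+k$ where the product $\bar f a$ lives, use perfectness of the pairing to detect the vanishing of $\bar f a$ by pairing against $A_{s-j-k}$, and correctly account for the degree shift in $B(-k)\cong\bar f A$. The remaining ingredients — identifying the colon ideal with an annihilator, and the socle argument for $A'$ — are routine.
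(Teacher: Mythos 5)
The paper cites this result from \cite{abe2020hessenberg} without reproducing a proof, so there is no in-paper argument to compare against. Your proof is correct: the reduction $(\mathfrak a:f)/\mathfrak a=\operatorname{Ann}_A(\bar f)$ and $B(-k)\cong\bar fA$, the Poincar\'e-duality computation yielding $\dim B_j=\dim(\bar fA)_{s-j}=\dim B_{r-j}$ (so $B$ lives in degrees $0,\dots,r$ with $\dim B_r=\dim B_0=1$), and the socle argument in the Gorenstein algebra $A'$ (every nonzero ideal contains the one-dimensional socle $A'_r$, but $(\ker\pi)_r=0$) together force $\ker\pi=0$. This is a clean and complete self-contained argument for the cited lemma.
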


In order to apply the above lemma, it will be necessary to prove certain quotients to be Poincar\'e duality algebras. The following lemma, which follows from, for example, \cite[Thm. 6.5.1]{smith1995polynomial}, will be useful for that.
\begin{lemma}\label{PDA}
    If $\mathfrak a\subseteq \mathbb{C}[\mathbf x_n]$ is a homogeneous ideal, a complete intersection, and is generated by the regular sequence $(f_1,\ldots, f_n)$, then $\mathbb{C}[\mathbf x_n]/\mathfrak a$ is a Poincar\'e duality algebra with socle degree $\sum_{i=1}^n(\deg f_i-1)$.
\end{lemma}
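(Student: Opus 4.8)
The statement splits into two essentially independent pieces: identifying the socle degree, which is a bookkeeping consequence of the Hilbert series formula \eqref{hilbregularsequences}, and verifying that the multiplication pairings are perfect, whose only genuine input is that a graded Artinian complete intersection is Gorenstein. Write $A:=\mathbb{C}[\mathbf x_n]/\mathfrak a$. Since $(f_1,\ldots,f_n)$ is a regular sequence of homogeneous polynomials of positive degree, $A$ is finite-dimensional with $\operatorname{Hilb}(A;q)=\prod_{i=1}^n[\deg f_i]_q$ by \eqref{hilbregularsequences}. Each factor $[\deg f_i]_q=1+q+\cdots+q^{\deg f_i-1}$ is a monic palindromic polynomial, so the product is palindromic, of degree $r:=\sum_{i=1}^n(\deg f_i-1)$, with leading coefficient $1$. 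Reading off coefficients gives $\dim_{\mathbb{C}}A_r=1$, $A_d=0$ for $d>r$, and $\dim_{\mathbb{C}}A_d=\dim_{\mathbb{C}}A_{r-d}$ for all $d$; this already identifies the socle degree as $r=\sum_{i=1}^n(\deg f_i-1)$.

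The remaining task is to establish the perfect pairing $A_d\times A_{r-d}\to A_r\cong\mathbb{C}$. The crucial fact is that $\mathbb{C}[\mathbf x_n]$ is Gorenstein and that a quotient of a Gorenstein ring by a regular sequence is again Gorenstein, so $A$ is a graded Artinian Gorenstein $\mathbb{C}$-algebra; equivalently, its socle is one-dimensional, and by the previous paragraph this socle must sit in degree $r$. (This is precisely the content of \cite[Thm. 6.5.1]{smith1995polynomial}; the underlying reason is the self-duality of the Koszul complex $K_\bullet(f_1,\ldots,f_n;\mathbb{C}[\mathbf x_n])$, which resolves $A$ and yields $\operatorname{Ext}^n_{\mathbb{C}[\mathbf x_n]}(A,\mathbb{C}[\mathbf x_n])\cong A$ up to a degree shift, and one may also invoke the complete-intersection/Poincar\'e-duality dictionary recalled in \cite{abe2020hessenberg}.) Granting the one-dimensional socle in degree $r$, perfectness follows by a short argument: given $0\ne a\in A_d$, the ideal $(a)\subseteq A$ is a nonzero submodule of the Artinian local ring $A$, hence contains a nonzero socle element, so $A_r\subseteq (a)$; choosing $c\in A$ with $ac$ a nonzero element of $A_r$ and extracting its degree-$(r-d)$ component $b$ gives $ab\ne 0$. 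Thus $a\mapsto(b\mapsto ab)$ embeds $A_d$ into $\operatorname{Hom}_{\mathbb{C}}(A_{r-d},A_r)$, and since $\dim_{\mathbb{C}}A_d=\dim_{\mathbb{C}}A_{r-d}$, this embedding is an isomorphism --- exactly the Poincar\'e duality property.

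The only real obstacle is the Gorenstein (one-dimensional socle) input; I would dispose of it by citing \cite[Thm. 6.5.1]{smith1995polynomial} rather than rebuilding the Koszul-duality argument, since that reference treats exactly the case of a complete intersection in a polynomial ring. Everything else --- the socle degree, the dimension symmetry coming from palindromicity, and the final linear-algebra step --- is routine and self-contained.
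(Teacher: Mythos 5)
Your proposal is correct, and it is essentially the same as what the paper does: the paper offers no proof at all, simply pointing to \cite[Thm.~6.5.1]{smith1995polynomial}, and your argument also ultimately rests on that same citation for the Gorenstein/one-dimensional-socle input. The palindromicity bookkeeping and the socle-to-perfect-pairing linear algebra you supply are accurate and self-contained, but they are unpacking what that reference already asserts rather than replacing it.
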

\begin{lemma}\label{colonideallemma}
     For any $J\subseteq [n]$, we have the following equality of ideals in $\mathbb C[\mathbf x_n]$: $$(I^B_n:f_J)=(p_{J,1},\ldots, p_{J,n}).$$
\end{lemma}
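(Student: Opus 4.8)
The plan is to apply Lemma~\ref{commalg} with $\mathfrak a = I^B_n$ and $\mathfrak a' = \mathcal I_J := (p_{J,1},\ldots,p_{J,n})$ and $f = f_J$. To do so I need four ingredients: (1) $f_J \notin I^B_n$; (2) $\mathcal I_J \subseteq (I^B_n : f_J)$; (3) $\mathbb C[\mathbf x_n]/\mathcal I_J$ is a Poincaré duality algebra of the appropriate socle degree; and (4) $\mathbb C[\mathbf x_n]/I^B_n$ is a Poincaré duality algebra of socle degree equal to that of $\mathbb C[\mathbf x_n]/\mathcal I_J$ plus $\deg f_J$. Ingredients (3) and (4) are the easy bookkeeping: $I^B_n = (p_2,\ldots,p_{2n})$ is a complete intersection generated by a regular sequence of degrees $2,4,\ldots,2n$, so by Lemma~\ref{PDA} its quotient is a Poincaré duality algebra of socle degree $\sum_{i=1}^n(2i-1) = n^2$; and by Lemma~\ref{regularsequence} the sequence $(p_{J,1},\ldots,p_{J,n})$ is regular, hence again a complete intersection, so Lemma~\ref{PDA} gives that $\mathbb C[\mathbf x_n]/\mathcal I_J$ is a Poincaré duality algebra of socle degree $\sum_{i=1}^n(\deg p_{J,i}-1) = \sum_{i=1}^n \stB_i(J)$, using the identity $\deg p_{J,i} = \stB_i(J)+1$. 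It remains to check that $\deg f_J = n^2 - \sum_{i=1}^n \stB_i(J)$: since $\deg f_J = |J| + 2\binom{|[n]-J|}{?}$... more carefully, $f_J = \prod_{j\in J}x_j \prod_{j<i\le n}(x_j^2-x_i^2)$ has degree $|J| + 2\cdot\#\{(j,i): j\in J, j<i\le n\}$, and a direct count using \eqref{eqstB3} reconciles this with $n^2 - \sum_i\stB_i(J)$; this is a routine but necessary computation.

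For ingredient (1), I would argue via the harmonics description from Section~\ref{harmonicsection}: $f_J \notin I^B_n$ iff $f_J \odot \delta^B_n \neq 0$. Since $f_J$ is a divisor of $\delta^B_n = \prod_i x_i \prod_{i<j}(x_i^2-x_j^2)$ (as $\prod_{j\in J}x_j \mid \prod_i x_i$ and $\prod_{j\in J, j<i}(x_j^2-x_i^2) \mid \prod_{i<j}(x_i^2-x_j^2)$), and $\delta^B_n$ is, up to a nonzero scalar, the top-degree harmonic, the product $f_J \odot \delta^B_n$ is a nonzero constant multiple of the complementary factor — or more robustly, one notes $\delta^B_n \odot \delta^B_n \neq 0$ and factors the differential operator. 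Alternatively this can be extracted directly from the regular-sequence structure. For ingredient (2) — the containment $\mathcal I_J \subseteq (I^B_n : f_J)$ — I must show $p_{J,i}\cdot f_J \in I^B_n$ for each $i$. This is where the operators $\mathfrak D_J$ and Lemma~\ref{GaletriangularityLemma} enter: the key point is that $\mathfrak F_{J,J} = \pm f_J$, and that the $q_{J,i} \in SI^B_n$ from Lemma~\ref{leadingterm} have Gale-leading term $\pm p_{J,i}\cdot\theta_J$. Pairing $q_{J,i}$ against $\mathfrak D_J$-type expressions, or directly computing $f_J \odot (p_{J,i}\cdot\delta^B_n)$ and showing it vanishes, should yield the containment; the cleanest route is probably to show $p_{J,i} \odot (f_J \odot \delta^B_n) = 0$ by exploiting that $f_J \odot \delta^B_n$ lies in $H^B_{|[n]-J|}$ viewed in the appropriate subset of variables, against which $p_{J,i}$ (a symmetric function in those variables of too-high degree) acts as zero by the classical fact $h_r(S) \in I_n$ for $r > |S^c|$ used in Lemma~\ref{leadingterm}.

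I expect ingredient (2), the containment $\mathcal I_J \subseteq (I^B_n : f_J)$, to be the main obstacle. The degree and Poincaré-duality bookkeeping is mechanical once the identities from Section~\ref{upperboundsection} are in hand, and non-membership of $f_J$ follows from the Vandermondian divisibility; but establishing $p_{J,i} f_J \in I^B_n$ requires genuinely connecting the combinatorially-defined $p_{J,i}$ to the structure of $I^B_n$, and the natural bridge is precisely the matrix-minor identities for $\mathfrak F_{J,K}$ and the Gale-triangularity of Lemma~\ref{GaletriangularityLemma}. Concretely, I would expand $f_J \odot \delta^B_n$, recognize it (up to sign and scalar) as a Vandermondian-type element $\delta$ in the variables indexed by $[n]\setminus J$ with the $J$-variables specialized, and then invoke that $p_{J,i}$, being built from $h^2_{r_i}$ with $r_i$ exceeding the complementary codimension, annihilates $\delta$ under $\odot$ — this is the type B incarnation of the argument in Lemma~\ref{leadingterm}. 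Once all four ingredients are verified, Lemma~\ref{commalg} closes the proof immediately.
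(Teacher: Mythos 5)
Your overall strategy matches the paper's: apply Lemma~\ref{commalg} with $\mathfrak a = I^B_n$, $\mathfrak a' = (p_{J,1},\ldots,p_{J,n})$, $f = f_J$, and verify the Poincar\'e duality, socle-degree, non-membership, and containment hypotheses. The bookkeeping for ingredients (1), (3), (4) is handled correctly and is essentially what the paper does (socle degree $n^2$ for $I^B_n$, socle degree $\sum_i \stB_i(J)$ for the $p_{J,i}$ ideal via Lemma~\ref{regularsequence} and Lemma~\ref{PDA}, $f_J\notin I^B_n$ because $f_J\mid\delta^B_n$ and $\delta^B_n\odot\delta^B_n\ne 0$).

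The gap is in ingredient (2). Of the routes you list, the one you call ``probably cleanest'' --- viewing $f_J\odot\delta^B_n$ as a Vandermondian-type element in the variables indexed by $[n]\setminus J$ with the $J$-variables specialized, and then killing it with $p_{J,i}$ --- does not hold up. The quotient $\delta^B_n/f_J$ retains cross-factors $(x_i^2-x_j^2)$ with $i\notin J$, $j\in J$, and $f_J\odot\delta^B_n$ is not (even after specializing) a scalar multiple of $\delta^B_{[n]\setminus J}$. Concretely, with $n=2$, $J=\{2\}$: $f_J\odot\delta^B_2 = \partial_2\bigl(x_1x_2(x_1^2-x_2^2)\bigr) = x_1^3-3x_1x_2^2$, which is not proportional to $\delta^B_1 = x_1$ even after setting $x_2=0$. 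So the ``exceeds the complementary codimension'' annihilation argument has no target to act on. The route that actually works is the first one you mention only in passing: pair $q_{J,i}$ against $\mathfrak D_J(\delta^B_n)$. The missing observation you need there is that $\mathfrak D_J(\delta^B_n)\in SH^B_n$ --- this follows because $\mathfrak D_J$ is a $\mathbb C[\partial_1,\ldots,\partial_n]$-combination of the $d_{2I-1}$ operators, so $\mathfrak D_J(\delta^B_n)\in SH'^B_n$, and one then invokes Swanson--Wallach's inclusion $SH'^B_n\subseteq SH^B_n$. With that in hand, $q_{J,i}\odot\mathfrak D_J(\delta^B_n)=0$, and Gale-triangularity (your Lemma~\ref{GaletriangularityLemma} giving the leading term $\pm f_J\odot\delta^B_n\cdot\theta_J$, and Lemma~\ref{leadingterm} giving the leading term $\pm p_{J,i}\cdot\theta_J$) forces all cross-terms to cancel, leaving exactly $p_{J,i}\odot(f_J\odot\delta^B_n)=0$. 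You should commit to this route and make the $SH'^B_n\subseteq SH^B_n$ step explicit; as written, ingredient (2) is not established.
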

\begin{proof}
    We will start by showing that $p_{J,i}\in (I^B_n:f_J)$ for any $J\subseteq[n]$ and any $i\in[n]$. It suffices to prove that $p_{J,i}f_J\in I^B_n$, or equivalently, $(p_{J,i}f_J)\odot \delta^B_n=0$. 
    
    Consider $\mathfrak D_J(\delta^B_n)$. This is a linear combination of $d_{2I-1}$ operators with coefficients in $\partial_1,\ldots, \partial _n$, and so by the inclusion $SH'^B_n\subseteq SH^B_n$, $\mathfrak D_J(\delta^B_n)\in SH^B_n$. Also, $q_{J,i}\in SI^B_n$, so we have $$q_{J,i}\odot \mathfrak D_J\left(\delta^B_n\right)=0.$$
    
    We know that $$q_{J,i}=p_{J,i}\cdot\theta_J+\sum_{J<_{\text{Gale}}L}A_L\cdot \theta_L$$and by the previous lemma, we have $$\mathfrak D_J\left(\delta^B_n\right)=\left(f_J\odot\delta^B_n\right)\cdot\theta_J+\sum_{K<_{\text{Gale}}J}B_K\cdot \theta_K$$where $A_K$ and $B_K$'s all lie in $\mathbb C[\mathbf x_n]$. Multiplying these and using the triangularity, we see that $q_{J,i}\odot\mathfrak D_J\left(\delta^B_n\right)=0$ translates into $$p_{J,i}\odot(f_J\odot \delta^B_n)=0$$ which implies the required equality.

    For the reverse containment, it suffices to verify the conditions of Lemma \ref{commalg} where $f=f_J$, $\mathfrak a=I^B_n$ and $\mathfrak a'=(p_{J,1},\ldots, p_{J,n})$. The containment $\mathfrak a'\subseteq (\mathfrak a:f)$ follows from the previous argument. The quotient $\mathbb C[\mathbf x_n]/I^B_n$ is a Poincar\'e duality algebra by Lemma \ref{PDA} has socle degree $n^2$. The quotient $\mathbb C[\mathbf x_n]/\mathfrak (p_{J,1},\ldots, p_{J,n})$ is also a Poincar\'e duality algebra with socle degree $\deg p_{J,1}+\cdots+\deg p_{J,n}-n$, again by Lemma \ref{PDA}. Further, we note that $f_J|\delta^B_N$ for every $J\subseteq [n]$, so to prove $f_J\not\in I^B_n$ it suffices to prove $\delta^B_n\not\in I^B_n$. But this is obvious, since $$\delta^B_n\odot\delta^B_n>0$$ by the properties of an inner product, so in particular $\delta^B_n\odot\delta^B_n\ne 0$.

    Finally, we need to prove the equality of socle degrees. This is $$\left(\sum_{1\le i\le n} \deg p_{J,i}\right)-n+\deg f_J=n^2$$ which, noting that $\deg p_{J,i}=2r_i-[i\in J]$, reduces to $$\left(\sum_{1\le i\le n} 2n-2|J\cup \{i,\ldots, n\}|+2-[i\in J]\right)-n+|J|+\sum_{j\in J}2(n-j)=n^2.$$Clearly $\sum_{1\le i\le n}[i\in J]=|J|.$ Further, in the sum $\sum_{1\le i\le n}|J\cup\{i,\ldots ,n\}|$, each $i\in [n]$ is counted $i$ times, except for those in $J$, which are counted an additional $n-i$ times. Therefore the above equality reduces to $$\left(\sum_{1\le i\le n} 2n+2\right)-2\sum_{1\le i\le n}i-2\sum_{j\in J}(n-j)-n+\sum_{j\in J}2(n-j)=n^2$$ which is clear.
\end{proof}

\section{The operator theorem and lower bound\label{operatortheoremlowerbound}}
\subsection{Operator theorem}
Our next result is an explicit characterization of the superharmonic space $SH^B_n$. The Differential Operator Conjecture (Conj. 1.9 in \cite{swanson2021harmonic}) is about such a characterization for all pseudo-reflection groups of the form $G(m,1,n)$. In \cite{rhoades2024hilbert}, the authors prove the case $m=1$ of this conjecture. Proceeding along similar lines, the following theorem proves the special case $m=2$, which corresponds to the pseudo-reflection group $G(2,1,n)=\mathfrak B_n$.
\begin{theorem}[Operator theorem for type B]\label{operatortheorem}
     The superharmonic space $SH^B_n$ is generated by the elements $d_{2I-1}(\delta^B_n)$ for $I\subseteq[n]$, as a $\mathbb C[\mathbf x_n]$-module under the $\odot$-action: $$SH^B_n=\sum_{I\subseteq[n]}\mathbb C[\mathbf x_n]\odot d_{2I-1}(\delta^B_n).$$
\end{theorem}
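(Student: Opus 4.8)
The plan is to pit a lower bound against the upper bound \eqref{upperbound}. First note that the right-hand side of the claimed identity is exactly the space $SH'^B_n$ of Section~\ref{harmonicsection}: for $I=\{i_1<\cdots<i_k\}$ one has $d_{2I-1}(\delta^B_n)=d_{2i_1-1}\cdots d_{2i_k-1}(\delta^B_n)$, so $\sum_{I\subseteq[n]}\mathbb C[\mathbf x_n]\odot d_{2I-1}(\delta^B_n)=\mathbb C[\mathbf x_n]\odot\operatorname{Span}_{\mathbb C}\{d_{2I-1}(\delta^B_n):I\subseteq[n]\}=SH'^B_n$. Swanson--Wallach have shown $SH'^B_n\subseteq SH^B_n$, and $SH^B_n\cong SR^B_n$ as bigraded vector spaces, so it is enough to exhibit, in each bidegree, at least $\dim SR^B_n$ linearly independent elements of $SH'^B_n$; then \eqref{upperbound} squeezes $\operatorname{Hilb}(SH'^B_n;q,z)=\operatorname{Hilb}(SH^B_n;q,z)$, and the inclusion upgrades this to $SH'^B_n=SH^B_n$.

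To produce the elements, fix $J\subseteq[n]$, let $\mathcal B_n(J)$ be the homogeneous basis of $\mathbb C[\mathbf x_n]/\mathcal I_J$ provided by Lemma~\ref{triangularitylemma} (its existence rests on the regularity of Lemma~\ref{regularsequence}), and for $m\in\mathcal B_n(J)$ set $\eta_{J,m}:=m\odot\mathfrak D_J(\delta^B_n)$. Since $\mathfrak D_J(\delta^B_n)$ is, by definition, a $\mathbb C[\mathbf x_n]$-combination under $\odot$ of the elements $d_{2I-1}(\delta^B_n)$, both $\mathfrak D_J(\delta^B_n)$ and $\eta_{J,m}$ lie in $SH'^B_n$. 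A routine degree count on the entries of $\mathcal H$ — the bosonic degree of the $I$-summand of $\mathfrak D_J(\delta^B_n)$ equals $\deg\delta^B_n-\sum_{i\in I}(2i-1)-\deg\Delta_{[n]-J,([n]-I)^*}(\mathcal H)$ and is independent of $I$ — shows $\mathfrak D_J(\delta^B_n)$ is bosonic-homogeneous of fermionic degree $|J|$; since by Lemma~\ref{GaletriangularityLemma} its $\theta_J$-coefficient is $\pm f_J\odot\delta^B_n$, its bosonic degree is $n^2-\deg f_J$, and hence $\eta_{J,m}$ is homogeneous of bosonic degree $n^2-\deg f_J-\deg m$ and fermionic degree $|J|$.

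The heart of the argument is the linear independence of the $\eta_{J,m}$, which marries the Gale-triangularity of $\mathfrak D_J$ to the colon-ideal description of Lemma~\ref{colonideallemma}. Suppose $\sum_{J,m}c_{J,m}\eta_{J,m}=0$ and pick $J_0$ maximal in the Gale order among the $J$ with some $c_{J,m}\ne0$. By Lemma~\ref{GaletriangularityLemma}, $\mathfrak D_J(\delta^B_n)=\sum_{K\le_{\text{Gale}}J}(\mathfrak F_{J,K}\odot\delta^B_n)\theta_K$ with $\mathfrak F_{J,J}=\pm f_J$, so using $g\odot(h\odot w)=(gh)\odot w$ for $g,h\in\mathbb C[\mathbf x_n]$ the $\theta_{J_0}$-component of $\eta_{J,m}$ is $\pm(m\,\mathfrak F_{J,J_0})\odot\delta^B_n$, which vanishes unless $J_0\le_{\text{Gale}}J$; maximality of $J_0$ forces $J=J_0$. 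Reading off the $\theta_{J_0}$-component of the relation then gives $\bigl(\sum_m c_{J_0,m}m\bigr)f_{J_0}\odot\delta^B_n=0$, hence $\sum_m c_{J_0,m}m\in(I^B_n:f_{J_0})=\mathcal I_{J_0}$ by Lemma~\ref{colonideallemma} together with $g\odot\delta^B_n=0\iff g\in I^B_n$; as the $m\in\mathcal B_n(J_0)$ are independent modulo $\mathcal I_{J_0}$, all $c_{J_0,m}=0$, a contradiction. So the $\eta_{J,m}$ are linearly independent.

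It remains to count. The $\eta_{J,m}$ contribute $\sum_{J\subseteq[n]}q^{n^2-\deg f_J}z^{|J|}\sum_{m\in\mathcal B_n(J)}q^{-\deg m}$ to $\operatorname{Hilb}(SH'^B_n;q,z)$. By Lemma~\ref{PDA}, $\mathbb C[\mathbf x_n]/\mathcal I_J$ is a Poincar\'e duality algebra, so $\sum_{m}q^{\deg m}=\prod_i[\stB_i(J)+1]_q$ is palindromic with top degree $s_J=\sum_i\deg p_{J,i}-n$, giving $\sum_m q^{-\deg m}=q^{-s_J}\prod_i[\stB_i(J)+1]_q$; and the socle-degree identity $s_J+\deg f_J=n^2$ established inside the proof of Lemma~\ref{colonideallemma} kills the prefactor $q^{n^2-\deg f_J-s_J}$. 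Thus the $\eta_{J,m}$ contribute exactly $\sum_J z^{|J|}\prod_i[\stB_i(J)+1]_q=\sum_{k=0}^n[2k]!!_q\cdot\qstirB(n,k)\cdot z^{n-k}$ by Lemma~\ref{combidentity}, matching \eqref{upperbound}; squeezing as in the first paragraph finishes the proof (and incidentally settles Conjecture~\ref{conj}). The main obstacle is the third step, but essentially all of its difficulty has already been absorbed into Lemmas~\ref{GaletriangularityLemma} and~\ref{colonideallemma}; what still requires care is the composition rule for the $\odot$-action and the bookkeeping of bidegrees and palindromicity.
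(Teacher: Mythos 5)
Your proposal is correct and rests on the same two pillars the paper uses: the Gale triangularity of the $\mathfrak D_J$ operators (Lemma~\ref{GaletriangularityLemma}) and the colon-ideal identification $\mathcal I_J=(I^B_n:f_J)$ (Lemma~\ref{colonideallemma}). However, you formulate the lower bound \emph{dually} to the paper. The paper shows that no nontrivial element of the spanning set $\bigsqcup_J\mathcal B_n(J)\cdot\theta_J\subseteq\Omega_n$ can $\odot$-annihilate $SH'^B_n$, certifying this by pairing against the single test vector $\mathfrak D_{J_0}(\delta^B_n)$ for a Gale-\emph{minimal} $J_0$, which gives $\dim SH'^B_n\ge\sum_J|\mathcal B_n(J)|$ by total dimension count alone. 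You instead build explicit harmonics $\eta_{J,m}=m\odot\mathfrak D_J(\delta^B_n)\in SH'^B_n$ and prove their linear independence by extracting the $\theta_{J_0}$-coefficient for a Gale-\emph{maximal} $J_0$. These are the two sides of the nondegenerate $\odot$-pairing between $SR^B_n$ and $SH'^B_n$; the underlying computation is identical. What your version buys is an explicit basis of harmonics for free; what it costs is the extra bookkeeping with Poincar\'e duality and the socle-degree identity to convert $\sum_m q^{-\deg m}$ into $q^{-s_J}\prod_i[\stB_i(J)+1]_q$, which the paper sidesteps entirely by tracking degrees on the $SR^B_n$ side where $\deg(m\cdot\theta_J)=\deg m$. (One small note: your maximality argument implicitly restricts to $J$ of fixed size $|J_0|$, as the paper does explicitly; this is harmless since the Gale order only compares sets of equal size and the $\theta_{J_0}$-projection already kills the other fermionic degrees.)
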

\begin{proof}
    Recalling the terminology introduced in Section \ref{harmonicsection}, this is equivalent to showing $SH'^B_n=SH^B_n$. We follow the proof strategy in \cite[Theorem 5.1]{rhoades2024hilbert}. As mentioned before, \cite{swanson2021harmonic} proves that $SH'^B_n\subseteq SH^B_n$, so it suffices to show that $\dim SH^B_n\le \dim SH'^B_n$. But we have the isomorphism $SH^B\cong SR^B_n$, and \ref{upperbound} already gives us an upper bound on $\dim SR^B_n$, so it suffices to show that the same quantity is a lower bound for $\dim SH'^B_n$.

    To this end, note that the discussion preceding Lemma \ref{triangularitylemma} together with Lemma \ref{colonideallemma} implies that for each $J\subseteq [n]$, one can find a set of homogeneous polynomials $\mathcal B_n(J)$ with degree generating function $\prod_{i=1}^n[\deg p_{J,i}]_q$ that descends to a basis of $(I^B_n:f_J)$. We claim that the set $\{g\odot(f_J\odot \delta^B_n):g\in \mathcal B_n(J)\}$ is linearly independent.

    To see why, suppose we have $$\sum_{g\in \mathcal B_n(J)}c_gg\odot(f_J\odot\delta^B_n)=0$$for some constants $c_g$, not all zero. The above implies $$\left(\sum_{g\in\mathcal B_n(J)}c_gg\cdot f_J\right)\odot \delta^B_n\implies \left(\sum_{g\in \mathcal B_n(J)}c_gg\right)\cdot f_J\in I^B_n\implies \sum_{g\in \mathcal B_n(J)}c_gg\in (I^B_n:f_J)$$which contradicts the fact that the images of the $g$'s in $\mathbb C[\mathbf x_n]/(I^B_n:f_J)$ are linearly independent, proving our claim.

    Next, assume that we have $$\left(\sum_{J\subseteq [n]}\sum_{g_J\in \mathcal B_n(J)}c_{J,g_J}(g_J\cdot\theta_J)\right)\odot SH'^B_n=0$$for some choice of $c_{J,g_J}\in\mathbb C$: we claim this may happen only if all the $c_{J,g_J}$'s are zero. Assume to the contrary. Separating out the homogeneous parts of fermionic degree, we may assume the above sum only consists of $J$ of a fixed size $k$, and one of the scalars $c_{J,g_J}$ is non-zero for some $J$ of size $k$. Further, suppose $J_0$ is the Gale-minimal subset of this size for which $c_{J_0,g_{
    {J_0}}}$  is non-zero for some $g_{J_0}$.

    Consider the superspace element $\mathfrak D_{J_0}(\delta^B_n)$. This is clearly in $SH'^B_n$, so the above implies $$\left(\sum_{\substack{J\subseteq [n]\\ |J|=k}}\sum_{g_J\in \mathcal B_n(J)}c_{J,g_J}(g_J\cdot\theta_J)\right)\odot \mathfrak D_{J_0}(\delta^B_n)=0.$$However, notice that in the sum in brackets above, $\theta_J$ has a non-zero coefficient only if $J\ge_{\text{Gale}}J_0$, and in the expansion of $\mathfrak D_{J_0}(\delta^B_n)$, $\theta_J$ has a non-zero coefficient only if $J\le_{\text{Gale}}J_0$ (Lemma \ref{GaletriangularityLemma}). Consequently, when the sum is expanded out, the only surviving terms are those containing $\theta_{J_0}$. The coefficient of $\theta_{J_0}$ in $\mathfrak D_{J_0}(\delta^B_n)$ is $\pm f_J\odot \delta^B_n$, so the above equality becomes $$0=\left(\sum_{g_{J_0}\in \mathcal B_n(J_0)}c_{J_0,g_{J_0}}\cdot g_{J_0}\cdot\theta_{J_0}\right)\odot \left(\pm f_{J_0}\odot\delta^B_n\cdot\theta_{J_0}\right)=\sum_{g_{J_0}\in \mathcal B_n(J_0)}\pm c_{J_0,g_{J_0}}\cdot g_{J_0}\odot(f_{J_0}\odot\delta^B_n)
    $$
    which contradicts our previously proven result that the set $\{g\odot(f_J\odot \delta^B_n):g\in \mathcal B_n(J)\}$ is linearly independent. Thus all the $c_{J,g_J}$'s must be $0$.

    This implies $\dim SH'^B_n$ is at least $\sum_{J}|\mathcal B_n(J)|$. But we have seen before that $\dim SR^B_n$ is at most $\sum_{J}|\mathcal B_n(J)|$ (Lemma \ref{hilbertspanning}), so by the reasoning in the first paragraph of this proof, we are done.
\end{proof}
\subsection{Hilbert series} We are now ready to prove the conjectured Hilbert series for $SR^B_n$. We start by proving a lemma analogous to Lemma 5.2 in \cite{rhoades2024hilbert}.

\begin{lemma}\label{linearindephilbert}
    Suppose for each $J\subseteq [n]$, we have a set $\mathcal B_n(J)$ of homogeneous polynomials that descend to a basis of $\mathbb C[\mathbf x_n]/(I^B_n:f_J)$. Then the set $$\mathcal B_n=\bigsqcup_{J\subseteq[n]}\mathcal B_n(J)\cdot\theta_J$$descends to a linearly independent subset of $SR^B_n$.
\end{lemma}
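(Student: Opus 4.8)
The plan is to transfer the already-established upper bound on $\dim SR^B_n$ (inequality \eqref{upperbound}) and the linear independence facts proved inside Theorem \ref{operatortheorem} into the statement that $\mathcal B_n$ is a linearly independent set in $SR^B_n$. The key observation is that $SR^B_n \cong SH^B_n$ as graded vector spaces via the inner product, so it suffices to produce a linearly independent subset of $SH^B_n$ of the same cardinality and bidegree distribution as $\mathcal B_n$. But in the course of proving the operator theorem we already did exactly this: we showed that the collection $\{g \odot (f_J \odot \delta^B_n) : g \in \mathcal B_n(J),\ J \subseteq [n]\}$ is linearly independent in $SH'^B_n \subseteq SH^B_n$.

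First I would recall that any element of $SR^B_n$ is the image of an element of $\Omega_n$, and that a set of cosets $\{\bar v_1, \ldots, \bar v_m\}$ in $SR^B_n = \Omega_n / SI^B_n$ is linearly independent if and only if no nonzero linear combination $\sum c_i v_i$ lies in $SI^B_n$, which by the decomposition $\Omega_n = SI^B_n \oplus SH^B_n$ is equivalent to saying that the orthogonal projections of the $v_i$ onto $SH^B_n$ are linearly independent — or, more directly, that $(\sum c_i v_i) \odot h \neq 0$ for some $h \in SH^B_n$ whenever the $c_i$ are not all zero. So the heart of the argument is: given a nontrivial relation $\sum_{J}\sum_{g_J \in \mathcal B_n(J)} c_{J,g_J}\, g_J \cdot \theta_J \in SI^B_n$, derive a contradiction by pairing against a suitable element of $SH'^B_n \subseteq SH^B_n$.

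The execution mirrors the second half of the proof of Theorem \ref{operatortheorem} almost verbatim. Separating by fermionic degree, assume the relation involves only $J$ with $|J| = k$, and let $J_0$ be the Gale-minimal such $J$ with some nonzero coefficient. Pair the putative element of $SI^B_n$ against $\mathfrak D_{J_0}(\delta^B_n) \in SH'^B_n$; since a coset representative lying in $SI^B_n$ must $\odot$-annihilate every element of $SH^B_n$, this pairing is zero. By the Gale-triangularity of $\mathfrak D_{J_0}(\delta^B_n)$ (Lemma \ref{GaletriangularityLemma}, which gives $\mathfrak F_{J_0,K} = 0$ unless $J_0 \ge_{\text{Gale}} K$ and $\mathfrak F_{J_0,J_0} = \pm f_{J_0}$) and the fact that $\theta_J$ appears in our relation only for $J \ge_{\text{Gale}} J_0$, the only surviving contribution is the $\theta_{J_0}$-term, yielding $\sum_{g_{J_0}} \pm c_{J_0,g_{J_0}}\, g_{J_0} \odot (f_{J_0} \odot \delta^B_n) = 0$. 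This contradicts the linear independence of $\{g \odot (f_{J_0} \odot \delta^B_n) : g \in \mathcal B_n(J_0)\}$ established in the proof of Theorem \ref{operatortheorem}, using that the $g_{J_0}$ descend to a basis of $\mathbb C[\mathbf x_n]/(I^B_n : f_{J_0})$.

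I expect there to be essentially no obstacle here — the lemma is really a repackaging of work already done — but the one point requiring care is the bookkeeping to ensure the pairing-against-$SH^B_n$ criterion is applied correctly: one must verify that a representative whose image in $SR^B_n$ vanishes indeed $\odot$-annihilates all of $SH^B_n$ (immediate from $\Omega_n = SI^B_n \oplus SH^B_n$ and the definition of $SH^B_n$ as $(SI^B_n)^\perp$), and that the homogeneity reduction to fixed fermionic degree $k$ is legitimate (it is, since $SI^B_n$ is a bigraded ideal). One should also note that the argument produces linear independence, not a spanning statement; combined with Lemma \ref{hilbertspanning} and inequality \eqref{upperbound}, this will pin down the Hilbert series exactly, but that synthesis belongs to the following result rather than to this lemma.
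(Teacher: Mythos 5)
Your proof is correct and matches the paper's argument essentially word for word: pair a hypothetical nontrivial relation against $\mathfrak D_{J_0}(\delta^B_n)$ for the Gale-minimal $J_0$ with a nonzero coefficient, use the triangularity of Lemma~\ref{GaletriangularityLemma} to isolate the $\theta_{J_0}$-contribution, and contradict the linear independence of $\{g\odot(f_{J_0}\odot\delta^B_n):g\in\mathcal B_n(J_0)\}$, which follows from the hypothesis that $\mathcal B_n(J_0)$ descends to a basis of $\mathbb C[\mathbf x_n]/(I^B_n:f_{J_0})$. The only slip is your opening claim that it ``suffices to produce a linearly independent subset of $SH^B_n$ of the same cardinality and bidegree distribution as $\mathcal B_n$'' --- that would only bound $\dim SR^B_n$ from below and does not by itself show $\mathcal B_n$ is independent --- but since you then carry out the direct pairing argument rather than relying on this reduction, the proof is unaffected.
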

\begin{proof}
    We follow the same line of reasoning as in Theorem \ref{operatortheorem}. If the stated result is not true, then we can find constants $c_{J,g_J}\in\mathbb C$, not all zero, so that the linear combination $$\sum_{J\subseteq[n]}\sum_{g_J\in \mathcal B_n(J)}c_{J,g_J}g_J\cdot\theta_J$$is in $SI^B_n$, or equivalently it annihilates $SH^B_n$ via the $\odot$-action. As before, we can restrict our attention to $J$'s of a fixed size $k$ by looking at homogenous parts in fermionic degree, and among these $J$'s, pick the Gale-minimal $J_0$ so that $c_{J_0,g_{J_0}}\ne 0$ for some $g_{J_0}\in\mathcal B_n(J_0)$. The above implies $$\left(\sum_{\substack{J\subseteq [n],|J|=k\\J\ge_{\text{Gale}}J_0}}\sum_{g_J\in \mathcal B_n(J)}c_{J,g_J}g_J\cdot\theta_J\right)\odot \mathfrak D_{J_0}(\delta^B_n)=0$$since $\mathfrak D_{J_0}(\delta^B_n)$ belongs to $SH^B_n$. By the same reasoning as in proof for Theorem \ref{operatortheorem}, this reduces to $$\sum_{g_{J_0}\in \mathcal B_n(J_0)}c_{J_0,g_{J_0}}g_{J_0}\odot(f_{J_0}\odot\delta^B_n)=0$$which leads to a contradiction as before.\end{proof}
\begin{theorem}
    The bigraded Hilbert series for $SR^B_n$ is given by $$\operatorname{Hilb}(SR^B_n;q,z)=\sum_{k=0}^n [2k]!!_q\cdot \operatorname{Stir}^B_q(n,k)\cdot z^{n-k}.$$
\end{theorem}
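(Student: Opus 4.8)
The plan is to sandwich $\operatorname{Hilb}(SR^B_n;q,z)$ between two matching bounds. The upper bound is already in hand: equation \eqref{upperbound} from Section \ref{upperboundsection} gives
$$\operatorname{Hilb}(SR^B_n;q,z)\le \sum_{k=0}^n [2k]!!_q\cdot \operatorname{Stir}^B_q(n,k)\cdot z^{n-k},$$
so everything reduces to producing a matching lower bound, i.e.\ exhibiting that many linearly independent elements of $SR^B_n$ in each bidegree. The key point is to choose the sets $\mathcal B_n(J)$ so that the \emph{same} family simultaneously witnesses the upper bound (via Lemma \ref{triangularitylemma} and Lemma \ref{hilbertspanning}) and the lower bound (via Lemma \ref{linearindephilbert}).

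Concretely, I would argue as follows. By Lemma \ref{colonideallemma}, the colon ideal $(I^B_n:f_J)$ equals $(p_{J,1},\ldots,p_{J,n})$; by Lemma \ref{regularsequence} this is generated by a homogeneous regular sequence, so by \eqref{hilbregularsequences} the quotient $\mathbb C[\mathbf x_n]/(I^B_n:f_J)$ has Hilbert series $\prod_{i=1}^n[\deg p_{J,i}]_q=\prod_{i=1}^n[\stB_i(J)+1]_q$. Fix, for each $J\subseteq[n]$, a set $\mathcal B_n(J)$ of homogeneous polynomials descending to a basis of this quotient, so that $\sum_{m\in\mathcal B_n(J)}q^{\deg m}=\prod_{i=1}^n[\stB_i(J)+1]_q$. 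Lemma \ref{linearindephilbert} then says that $\mathcal B_n=\bigsqcup_{J\subseteq[n]}\mathcal B_n(J)\cdot\theta_J$ descends to a linearly independent subset of $SR^B_n$, and since the element $m\cdot\theta_J$ has bosonic degree $\deg m$ and fermionic degree $|J|$, this yields
$$\operatorname{Hilb}(SR^B_n;q,z)\ge \sum_{J\subseteq[n]}z^{|J|}\sum_{m\in\mathcal B_n(J)}q^{\deg m}=\sum_{J\subseteq[n]}z^{|J|}\prod_{i=1}^n[\stB_i(J)+1]_q.$$

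Finally I would invoke Lemma \ref{combidentity}: grouping the subsets $J$ by size $|J|=n-k$ turns the right-hand side above into $\sum_{k=0}^n z^{n-k}\sum_{|J|=n-k}\prod_{i=1}^n[\stB_i(J)+1]_q=\sum_{k=0}^n[2k]!!_q\cdot\operatorname{Stir}^B_q(n,k)\cdot z^{n-k}$, which matches the upper bound term by term. Hence both inequalities are equalities and the theorem follows.

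At this stage there is essentially no obstacle left: all the substantive work -- the Gale-leading-term analysis (Lemma \ref{leadingterm}), the regularity of $(p_{J,1},\ldots,p_{J,n})$ (Lemma \ref{regularsequence}), the colon-ideal identification (Lemma \ref{colonideallemma}), the operator theorem, and the triangularity argument behind Lemma \ref{linearindephilbert} -- has already been carried out. The only thing to watch is the bookkeeping: matching the fermionic degree $|J|$ with the index $n-k$ in the Stirling-number expansion, and observing that the upper-bound and lower-bound arguments are compatible because both require $\mathcal B_n(J)$ only to be a homogeneous basis of the single ring $\mathbb C[\mathbf x_n]/(p_{J,1},\ldots,p_{J,n})=\mathbb C[\mathbf x_n]/(I^B_n:f_J)$, so one valid choice serves both purposes at once.
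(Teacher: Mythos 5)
Your proposal is correct and follows the same route as the paper: upper bound from \eqref{upperbound}, lower bound from Lemma \ref{linearindephilbert} applied to the sets $\mathcal B_n(J)$, with Lemma \ref{colonideallemma} reconciling the two descriptions of the quotient and Lemma \ref{combidentity} doing the final rewriting. The only difference is expository -- you spell out the compatibility between the upper-bound and lower-bound choices of $\mathcal B_n(J)$ slightly more explicitly than the paper does, but the substance is identical.
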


\begin{proof}
    We have seen before that for each $J\subseteq[n]$, one can pick a set of homogeneous polynomials $\mathcal B_n(J)$ satisfying the conditions of Lemma \ref{triangularitylemma} with degree generating function $\prod_{i=1}^n[\deg p_{J,i}]_q.$ Therefore by the above lemma, and the identity shown in Lemma \ref{combidentity} we see that $$\operatorname{Hilb}(SR^B_n;q,z)\ge \sum_{k=0}^n [2k]!!_q\cdot \operatorname{Stir}^B_q(n,k)\cdot z^{n-k}.$$The reverse inequality was established in \ref{upperbound}, and so the result follows.
\end{proof}
Additionally, this yields a procedure for constructing bases of $SR^B_n$ that will be useful in the next section. Combining the results of Lemma \ref{linearindephilbert} and Lemma \ref{hilbertspanning}, we have the following:
\begin{theorem}\label{recipe}
    Suppose for each $J\subseteq [n]$, we have a set $\mathcal B_n(J)$ of homogeneous polynomials that descend to a basis of $\mathbb C[\mathbf x_n]/(I^B_n:f_J)$. Then the set $$\mathcal B_n=\bigsqcup_{J\subseteq[n]}\mathcal B_n(J)\cdot\theta_J$$descends to a basis of $\mathcal SR^B_n$.
\end{theorem}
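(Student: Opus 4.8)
The proof of Theorem~\ref{recipe} is essentially a bookkeeping exercise combining the two halves of the argument already in place. The plan is to show that the set $\mathcal B_n=\bigsqcup_{J\subseteq[n]}\mathcal B_n(J)\cdot\theta_J$ is simultaneously a spanning set and a linearly independent set in $SR^B_n$, whence it is a basis.

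First I would recall that by Lemma~\ref{colonideallemma} we have the equality of ideals $(I^B_n:f_J)=(p_{J,1},\ldots,p_{J,n})$ for every $J\subseteq[n]$. Therefore a set $\mathcal B_n(J)$ of homogeneous polynomials descends to a basis of $\mathbb C[\mathbf x_n]/(I^B_n:f_J)$ if and only if it descends to a basis of $\mathbb C[\mathbf x_n]/(p_{J,1},\ldots,p_{J,n})$; in particular its degree generating function is forced to be $\prod_{i=1}^n[\deg p_{J,i}]_q=\prod_{i=1}^n[\stB_i(J)+1]_q$ by Lemma~\ref{regularsequence} and \eqref{hilbregularsequences}. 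This means the hypotheses of Lemma~\ref{triangularitylemma} are met by this very choice of $\mathcal B_n(J)$: the first bullet of that lemma holds by the degree count just made, and the second bullet (the Gale-triangular expansion of $f\cdot\theta_J$ modulo $SI^B_n$) holds for any set of representatives of a basis of $\mathbb C[\mathbf x_n]/(p_{J,1},\ldots,p_{J,n})$, as its proof only uses the regularity of $(p_{J,1},\ldots,p_{J,n})$ and Lemma~\ref{leadingterm}. Consequently Lemma~\ref{hilbertspanning} applies verbatim and $\mathcal B_n$ descends to a spanning set of $SR^B_n$.

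Next I would invoke Lemma~\ref{linearindephilbert} directly: its hypothesis is exactly that each $\mathcal B_n(J)$ descends to a basis of $\mathbb C[\mathbf x_n]/(I^B_n:f_J)$, which is precisely what we are given, so its conclusion tells us $\mathcal B_n$ descends to a linearly independent subset of $SR^B_n$. Combining the two statements, $\mathcal B_n$ is a linearly independent spanning set, hence a basis of $SR^B_n$. I would phrase the proof as a two-sentence deduction: "By Lemma~\ref{colonideallemma}, a choice of $\mathcal B_n(J)$ as in the hypothesis also satisfies the conditions of Lemma~\ref{triangularitylemma}, so Lemma~\ref{hilbertspanning} shows $\mathcal B_n$ spans $SR^B_n$; by Lemma~\ref{linearindephilbert} it is also linearly independent, and therefore a basis."

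There is no real obstacle here; the only subtle point is confirming that the $\mathcal B_n(J)$ appearing as a basis of the colon-ideal quotient genuinely satisfies \emph{both} bullets of Lemma~\ref{triangularitylemma} — the degree bullet and the triangular-expansion bullet. The degree bullet requires the translation $(I^B_n:f_J)=(p_{J,1},\ldots,p_{J,n})$ from Lemma~\ref{colonideallemma} together with the degree identity $\deg p_{J,i}=\stB_i(J)+1$; the triangular-expansion bullet needs one to observe that the proof of Lemma~\ref{triangularitylemma} never used any special feature of its constructed $\mathcal B_n(J)$ beyond it being a basis of $\mathbb C[\mathbf x_n]/(p_{J,1},\ldots,p_{J,n})$. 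Once these are noted, the theorem is immediate.
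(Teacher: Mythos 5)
Your proposal is correct and matches the paper's approach: the paper itself gives no displayed proof, simply introducing Theorem~\ref{recipe} with the remark ``Combining the results of Lemma~\ref{linearindephilbert} and Lemma~\ref{hilbertspanning},'' and your argument supplies exactly the missing connective tissue — via Lemma~\ref{colonideallemma}, the hypothesis is equivalent to $\mathcal B_n(J)$ being a basis of $\mathbb C[\mathbf x_n]/(p_{J,1},\ldots,p_{J,n})$, so the spanning argument of Lemma~\ref{hilbertspanning} applies, while Lemma~\ref{linearindephilbert} directly gives independence. (A minor wording slip: you say ``the hypotheses of Lemma~\ref{triangularitylemma} are met,'' but that lemma is existential; what you mean, and what your subsequent sentences correctly argue, is that its \emph{conclusions} hold for the given choice of $\mathcal B_n(J)$.)
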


\section{Hyperplane Arrangements and an Explicit Basis\label{hyperplanebasissection}}

In \cite{angarone2025superspace}, Angarone et al. use the theory of hyperplane arrangements to derive a monomial basis for the superspace coinvariant ring in type $A$. The conjectured monomial basis of $SR^B_n$ (\cite{sagan2024q}) appears to resist similar attacks. However, we can obtain an explicit basis of $SR^B_n$ consisting of homogeneous elements by using hyperplane arrangement. These basis elements have the added property that their bosonic parts completely factor into roots of the type B Weyl group.

For $J\subseteq[n]$ and $i\in [n]$, define the set of polynomials $s_{J,i}$ as follows. Suppose $$\{1,\cdots, i-1\}\setminus J=\{j_1<\cdots <j_r\}.$$ Then we set
\begin{align*}
    s_{J,i}:=\{&1,\\
    &x_i,\\
    &x_i(x_{j_1}+x_i),\;x_i(x_{j_1}^2-x_i^2),\\
    &x_i(x_{j_1}^2-x_i^2)(x_{j_2}+x_i),\;x_i(x_{j_1}^2-x_i^2)(x_{j_2}^2-x_i^2),\\
    &\vdots\\
    &x_i(x_{j_1}^2-x_i^2)\cdots (x_{j_{r-1}}^2-x_i^2)(x_{j_r}+x_i),\;x_i(x_{j_1}^2-x_i^2)\cdots (x_{j_r}^2-x_i^2)\}.
\end{align*}
In other words, we start with $1$, and produce the subsequent elements by multiplying the previous element by the next term in the sequence $$x_i,x_{j_1}+x_i,x_{j_1}-x_i,x_{j_2}+x_i,x_{j_2}-x_i,\ldots, x_{j_r}+x_i,x_{j_r}-x_i.$$

Similarly, define $t_{J,i}$ as 
\begin{align*}
    t_{J,i}:=\{&1,\\
    &(x_{j_1}+x_i),\;(x_{j_1}^2-x_i^2),\\
    &(x_{j_1}^2-x_i^2)(x_{j_2}+x_i),\;(x_{j_1}^2-x_i^2)(x_{j_2}^2-x_i^2),\\
    &\vdots\\
    &(x_{j_1}^2-x_i^2)\cdots (x_{j_{r-1}}^2-x_i^2)(x_{j_r}+x_i),\;(x_{j_1}^2-x_i^2)\cdots (x_{j_r}^2-x_i^2)\}.
\end{align*}which is obtained by performing the previous procedure with the sequence $$x_{j_1}+x_i,x_{j_1}-x_i,x_{j_2}+x_i,x_{j_2}-x_i,\ldots, x_{j_r}+x_i,x_{j_r}-x_i.$$

We aim to show that the set 
\begin{equation}\label{basis}
    \mathcal M=\{p_1p_2\cdots p_n\cdot\theta_J:p_{i}\in s_{j,i}\text{ if }i\not\in J,p_{i}\in t_{J,i}\text{ if }i\in J\text{ where }J\subseteq [n]\}
\end{equation}descends to a basis of $SR^B_n$. Our strategy for the proof is as follows. In Theorem \ref{recipe} we have reduced our problem to finding bases for the quotients of $\mathbb{C}[\mathbf x_n]$ cut out by the colon ideals $I^B_n:f_J$. In this section, we will use Lemma \ref{arrcolonideal} to express these colon ideals as Solomon-Terao ideals of certain free arrangements. Next, we will use an exact sequence closely related to the Addition-Deletion theorem to derive a basis for a family of near-identical hyperplane arrangements. Finally, we will combine these results to obtain the desired basis.

\subsection{Colon ideals as Solomon-Terao ideals}
For ease of notation, set $\alpha_i=H_{x_i}$, $\alpha_{ij}=H_{x_i-x_j}$, and $\overline{\alpha}_{ij}=H_{x_i+x_j}$ for $1\le i<j\le n$. Let $\mathcal B_{\Phi^+}$ denote the set of all hyperplanes corresponding to the positive roots in $\mathfrak B_n$: $$\mathcal B_{\Phi^+}=\{\alpha_{ij},\overline{\alpha}_{ij}:1\le i<j\le n\}\cup\{\alpha_i:1\le i\le n\}.$$ For any $J\subseteq[n]$, define $\mathcal B_J$ to be the following subarrangement of $\mathcal B_{\Phi^+}$:
$$\mathcal B_J=\{\alpha_{ij},\overline{\alpha}_{ij}:j\not\in J,i>j\}\cup\{\alpha_j:j\not\in J\}.$$ 

\begin{lemma}\label{BJfree}
    The arrangement $\mathcal B_J$ is free.
\end{lemma}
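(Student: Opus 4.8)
I would prove this by induction on $n$, building $\mathcal B_J$ from the analogous arrangement in one fewer variable by a sequence of single-hyperplane additions and invoking the Addition--Deletion Theorem (Theorem \ref{additiondeletion}) at each step. The structural facts that make this work are the following. Writing $[n]\setminus J=\{j_1<\cdots<j_r\}$, the hyperplanes of $\mathcal B_J$ that involve the variable $x_n$ are exactly $\{x_n=0\}$ (present precisely when $n\notin J$) together with $\{x_n\pm x_b=0\}$ for $b\in[n]\setminus J$, $b<n$; using \eqref{eqstB3} one checks there are exactly $\stB_n(J)$ of them. Moreover the hyperplanes of $\mathcal B_J$ \emph{not} involving $x_n$ form, as an arrangement in $\mathbb C^{n-1}$, precisely the arrangement $\mathcal B_{J'}$ attached to $J':=J\cap[n-1]$, which by the inductive hypothesis is free with exponents $(\stB_1(J),\ldots,\stB_{n-1}(J))$ (here one uses that $\stB_k$ depends only on $J\cap[k]$).

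For the induction step, I would first adjoin a free $x_n$-direction to $\mathcal B_{J'}$, obtaining a free arrangement in $\mathbb C^n$ with exponents $(\stB_1(J),\ldots,\stB_{n-1}(J),0)$, and then add the $\stB_n(J)$ hyperplanes through $x_n$ one at a time. The key computation is that for any such hyperplane $H$, given by $x_n=0$ or by $x_n=\pm x_b$ for some $b\in[n]\setminus J$ with $b<n$, eliminating $x_n$ sends every other hyperplane through $x_n$, say $x_n=\pm x_{b'}$, to one of $x_b\pm x_{b'}=0$, $x_b=0$, or $x_{b'}=0$ -- all of which already lie in $\mathcal B_{J'}$ since $b,b'\in[n]\setminus J$ -- while the hyperplanes not involving $x_n$ restrict to themselves; hence the restriction of the current arrangement to $H$ is exactly $\mathcal B_{J'}$, free with exponents $(\stB_1(J),\ldots,\stB_{n-1}(J))$. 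Theorem \ref{additiondeletion} therefore applies at each of the $\stB_n(J)$ steps, the deletion being the previous stage and the restriction being the fixed arrangement $\mathcal B_{J'}$, raising the last exponent from $0$ up to $\stB_n(J)$ and preserving freeness. After all $\stB_n(J)$ additions we arrive at $\mathcal B_J$, free with exponents $(\stB_1(J),\ldots,\stB_n(J))$; the base case $n\le 1$ is immediate.

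I expect the main obstacle to be the exponent bookkeeping in Theorem \ref{additiondeletion}: one must verify at \emph{every} intermediate stage both that the restriction to the newly added hyperplane is exactly $\mathcal B_{J'}$ (no extra hyperplanes appear and none vanish) and that the deletion at that stage has exponents $(\stB_1(J),\ldots,\stB_{n-1}(J),m)$ with the correct $m$. Both follow from the structural description above, but they rest squarely on the special ``pendant'' shape of $\mathcal B_J$ -- the only hyperplanes meeting $x_n$ link it to a strictly smaller index lying outside $J$ -- so the substitution $x_n\mapsto\pm x_b$ never produces a form outside the already-present arrangement. As an alternative one could instead apply Saito's criterion (Theorem \ref{saito}) to explicit homogeneous tangent derivations -- the power sums $\sum_{j\notin J}x_j^{2t-1}\partial_j$ on the coordinates outside $J$, and an interpolation-type derivation supported on $\{x_i\}\cup\{x_j:j\notin J,\ j<i\}$ for each $i\in J$ -- but verifying $\mathbb C[\mathbf x_n]$-linear independence by a determinant is at least as delicate, so I would favor the inductive Addition--Deletion argument.
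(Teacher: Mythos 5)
Your proposal is correct but takes a genuinely different route from the paper. The paper proves freeness of $\mathcal B_J$ by applying Saito's criterion (Theorem \ref{saito}) directly: it exhibits explicit homogeneous derivations
$\rho^J_i = \sum_{k\ge i}\bigl(\prod_{j\notin J,\,j<i}(x_j^2-x_k^2)\bigr)x_k\,\partial_k$ for $i\notin J$ and $\rho^J_i = \prod_{j\notin J,\,j<i}(x_j^2-x_i^2)\,\partial_i$ for $i\in J$, then verifies tangency, degree count, and $\mathbb C[\mathbf x_n]$-independence. Your inductive Addition--Deletion argument is a different and valid proof. The structural observations it rests on are right: the $x_n$-hyperplanes of $\mathcal B_J$ all pair $x_n$ with a smaller index outside $J$ (or are $x_n=0$ when $n\notin J$), so every restriction of any intermediate arrangement to a newly added such hyperplane lands exactly in $\mathcal B_{J'}$ with $J'=J\cap[n-1]$; and $|\mathcal B_{J'}|=\sum_{j<n}\stB_j(J)$ makes the exponent bookkeeping in Theorem \ref{additiondeletion} close up, giving exponents $(\stB_1(J),\ldots,\stB_n(J))$ as in the paper. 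The tradeoff: the paper's Saito-style proof also produces an explicit $\mathbb C[\mathbf x_n]$-basis of $\der(\mathcal B_J)$, and the same pattern of explicit derivations reappears (as $\mu^J_i$) when the paper establishes freeness of $\widetilde{\mathcal B_J}$ later in the section; your argument proves freeness and computes exponents without exhibiting generators. (Your side remark about using power-sum derivations $\sum_{j\notin J}x_j^{2t-1}\partial_j$ is not quite the paper's choice -- the paper uses interpolation-type products $\prod(x_j^2-x_k^2)x_k$ -- but since you explicitly favor the Addition--Deletion route, this does not affect the proposal.)
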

\begin{proof}
    We will demonstrate an explicit $\mathbb{C}[\mathbf x_n]$-basis, and then use Saito's criterion (Theorem \ref{saito}) to prove this indeed freely generates $\der(\mathcal B_J)$.

    We define the following elements that will form an $\mathbb{C}[\mathbf x_n]$-basis:

    $$\rho^J_i=\begin{cases}
        \sum_{k=i}^n \left(\prod_{\substack{j\not\in J\\j<i}}(x_j^2-x_k^2)\right)x_k\cdot \partial_k&i\not\in J\\
        \prod_{\substack{j\not\in J\\j<i}}(x_j^2-x_i^2)\cdot\partial_i& i\in J.
    \end{cases}$$

    Now for each $i$, $\rho^J_i$ is homogeonous of degree equal to $$2(\{1,\ldots, i-1\}\setminus J)+[i\not\in J]=\stB_i(J)$$where $[\cdot]$ refers to the Iverson bracket as explained earlier. Note that this is equal to the number of hyperplanes in $\mathcal B_J$ of the form $H_{x_j-x_i},H_{x_j+x_i}$ and $H_{x_i}$. This shows that $\deg \rho^J_1+\cdots+\deg \rho^J_n=|\mathcal B_J|$. Since each $\rho^J_i$ is an $S$-linear combination of $\{\partial_i,\ldots, \partial_n\}$ with a non-zero coefficient for $\partial_i$, which means they are linearly independent over $S$. In order to apply Saito's criterion, it only remains to prove they all belong in $\der(\mathcal B_J)$: \textit{i.e.}, for any hyperplane $H_\alpha$ in $\mathcal B_J$, $\rho^J_i(\alpha)$ is divisible by $\alpha$.\\

    \paragraph{Case 1} Suppose $i\not\in J$. Our first subcase is when $\alpha$ is of the form $x_p\pm x_q$ for $p\not\in J$ and $p<q$.

    \begin{itemize}
        \item If $i\le p<q$, then$$\rho^J_i(x_p\pm x_q)=\prod_{\substack{j\not\in J\\j<i}}(x_j^2-x_p^2)\cdot x_p \pm\prod_{\substack{j\not\in J\\j<i}}(x_j^2-x_q^2)\cdot x_q.$$ Plugging in $x_p=\mp x_q$ makes this vanish, which proves the divisibility.
        \item If $p<i\le q$, then $$\rho^J_i(x_p\pm x_q)=\pm\prod_{\substack{j\not\in J\\j<i}}(x_j^2-x_q^2)\cdot x_q$$which contains $x_p^2-x_q^2$ as a factor since $p\not\in J$ and $p<q$.
        \item Finally, if $p<q<i$, $\rho^J_i(x_p\pm x_q)=0$ which again satisfies the divisibility.
    \end{itemize}

    The second sub-case is when $\alpha$ is of the form $x_p$ for some $p\not\in J$. If $p<i$, $\rho^J_i(x_p)=0$, and if $p\ge i$, $$\rho^J_i(x_p)=\prod_{\substack{j\not\in J\\j<i}}(x_j^2-x_p^2)\cdot x_p.$$ In either case, $\rho^J_i(x_p)$ is divisible by $x_p$.\\

    \paragraph{Case 2} Suppose $i\in J$.The only way $\rho^J_i(\alpha)$ could be non-zero is if $\alpha=x_p-x_q$ for $p\not\in J$ and $p<q$, with $i=q$, in which case $$\rho^J_i(x_p-x_q)=\prod_{\substack{j\not\in J\\j<i}}(x_j^2-x_q^2)\cdot x_q$$which contains $x_p^2-x_q^2$ as a factor, proving the required divisibility.
\end{proof}

We need one last ingredient in order to relate $\mathcal B_J$ to $I^B_n:f_J$.
\begin{lemma}\label{arrcolonideal2}
    Let $\mathcal B\subseteq \mathcal B_{\Phi^+}$ be an arbitrary subarrangement, and define $$\beta_{\mathcal B}=\prod_{H_\alpha\in \mathcal B_{\Phi^+}\setminus\mathcal B}\alpha.$$ Then $\mathcal{ST}(\mathcal B,\mathfrak a)$ is a quotient of $S/(I^B_n)$. Further, if $\mathcal B$ is free, we have $\mathfrak a_{\mathcal B}=(I^B_n:\beta_{\mathcal B})$.
\end{lemma}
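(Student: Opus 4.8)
The plan is to realize $\mathcal{ST}(\mathcal B_{\Phi^+},\mathfrak a)$ as $\mathbb C[\mathbf x_n]/I^B_n$ first, and then feed the pair $\mathcal B\subseteq\mathcal B_{\Phi^+}$ into Theorem \ref{arrcolonideal}. For the first part, note that $\mathcal B_{\Phi^+}$ is the Coxeter arrangement of type $B_n$, which is free with exponents $(1,3,5,\ldots,2n-1)$; a standard choice of basis for $\der(\mathcal B_{\Phi^+})$ is $\eta_k=\sum_{i=1}^n x_i^{2k-1}\partial_i$ for $k=1,\ldots,n$ (these are the "Euler-like" derivations attached to the type $B$ basic invariants $p_2,p_4,\ldots,p_{2n}$). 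Applying the homogeneous map $\mathfrak a:\partial_i\mapsto x_i$ sends $\eta_k$ to $\sum_i x_i^{2k}=p_{2k}$, so $\mathfrak a_{\mathcal B_{\Phi^+}}=(p_2,p_4,\ldots,p_{2n})=I^B_n$ and hence $\mathcal{ST}(\mathcal B_{\Phi^+},\mathfrak a)=\mathbb C[\mathbf x_n]/I^B_n$, which is Artinian. Since $\mathcal B\subseteq\mathcal B_{\Phi^+}$ always gives a surjection $\der(\mathcal B_{\Phi^+})\twoheadrightarrow$... no — rather, $\der(\mathcal B)\subseteq\der(\mathcal B_{\Phi^+})$, so $\mathfrak a_{\mathcal B}\subseteq\mathfrak a_{\mathcal B_{\Phi^+}}=I^B_n$, giving the surjection $\mathbb C[\mathbf x_n]/\mathfrak a_{\mathcal B}\twoheadrightarrow \mathbb C[\mathbf x_n]/I^B_n$; that is the first claim.

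For the second claim, I want to invoke Theorem \ref{arrcolonideal} with the ambient free arrangement $\mathcal A=\mathcal B_{\Phi^+}$, the free subarrangement $\mathcal B$ (free by hypothesis), and the map $\mathfrak c=\mathfrak a$. The theorem requires that $\mathcal{ST}(\mathcal B_{\Phi^+},\mathfrak a)$ be Artinian — checked above — and that the polynomial $Q(\mathcal B_{\Phi^+})/Q(\mathcal B)$ does not lie in $\mathfrak a_{\mathcal B_{\Phi^+}}=I^B_n$. By definition $Q(\mathcal B_{\Phi^+})/Q(\mathcal B)=\prod_{H_\alpha\in\mathcal B_{\Phi^+}\setminus\mathcal B}\alpha=\beta_{\mathcal B}$. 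Since $\beta_{\mathcal B}$ is a product of a subset of the linear forms $\{x_i\}\cup\{x_i\pm x_j\}$ dividing $\delta^B_n=\prod x_i\prod_{i<j}(x_i^2-x_j^2)$, we have $\beta_{\mathcal B}\mid\delta^B_n$, so it suffices to know $\delta^B_n\notin I^B_n$; this holds because $\delta^B_n\odot\delta^B_n=\langle\delta^B_n,\delta^B_n\rangle>0$ (exactly as argued in the proof of Lemma \ref{colonideallemma}), and $f\in I^B_n\iff f\odot\delta^B_n=0$. Thus the hypotheses of Theorem \ref{arrcolonideal} are met, and we conclude $\mathfrak a_{\mathcal B}=\mathfrak a_{\mathcal B_{\Phi^+}}:\left(Q(\mathcal B_{\Phi^+})/Q(\mathcal B)\right)=(I^B_n:\beta_{\mathcal B})$.

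I expect the only genuinely nontrivial point to be the identification $\mathfrak a_{\mathcal B_{\Phi^+}}=I^B_n$, i.e.\ that the basic derivations of the type $B$ Coxeter arrangement are (up to $\mathbb C[\mathbf x_n]$-basis change) the derivations $\sum_i x_i^{2k-1}\partial_i$, so that $\mathfrak a$ carries them to the power sums $p_{2k}$. This is classical (it is the statement that the Saito derivations for a Coxeter arrangement are the "$\nabla$-duals" of the basic invariants, specialized to type $B$ where the invariants are $p_2,\ldots,p_{2n}$), and can be verified directly via Saito's criterion (Theorem \ref{saito}): each $\eta_k=\sum x_i^{2k-1}\partial_i$ satisfies $\eta_k(x_i)=x_i^{2k-1}$, divisible by $x_i$, and $\eta_k(x_i\pm x_j)=x_i^{2k-1}\pm x_j^{2k-1}$, which vanishes on $x_i\mp x_j=0$ and so is divisible by $x_i\pm x_j$; the degrees $1,3,\ldots,2n-1$ sum to $n^2=|\mathcal B_{\Phi^+}|$, and the $\eta_k$ are visibly $\mathbb C[\mathbf x_n]$-independent (their coefficient matrix $(x_i^{2k-1})$ is a Vandermonde-type matrix of nonzero determinant). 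Everything else is bookkeeping about which linear forms appear in $\beta_{\mathcal B}$.
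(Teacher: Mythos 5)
Your overall strategy matches the paper's: identify $\mathfrak a_{\mathcal B_{\Phi^+}}=I^B_n$, then feed the pair $\mathcal B\subseteq\mathcal B_{\Phi^+}$ into Theorem~\ref{arrcolonideal}, verifying $\beta_{\mathcal B}\notin I^B_n$ via $\beta_{\mathcal B}\mid\delta^B_n$ and $\delta^B_n\notin I^B_n$. Your self-contained Saito-criterion computation of $\mathfrak a_{\mathcal B_{\Phi^+}}=I^B_n$ via $\eta_k=\sum_i x_i^{2k-1}\partial_i$ is a nice substitute for the paper's citation of Theorem~3.9 in \cite{abe2020hessenberg}, and everything in the second half is sound.

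However, the first claim is argued with the inclusion reversed. You wrote $\der(\mathcal B)\subseteq\der(\mathcal B_{\Phi^+})$; in fact, since $\mathcal B\subseteq\mathcal B_{\Phi^+}$, the larger arrangement imposes \emph{more} tangency conditions, so $\der(\mathcal B_{\Phi^+})\subseteq\der(\mathcal B)$. Applying $\mathfrak a$ then gives $I^B_n=\mathfrak a_{\mathcal B_{\Phi^+}}\subseteq\mathfrak a_{\mathcal B}$, which produces the surjection $\mathbb C[\mathbf x_n]/I^B_n\twoheadrightarrow\mathbb C[\mathbf x_n]/\mathfrak a_{\mathcal B}=\mathcal{ST}(\mathcal B,\mathfrak a)$; that is what it means for $\mathcal{ST}(\mathcal B,\mathfrak a)$ to be a quotient of $S/I^B_n$. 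Your chain of (reversed) inclusions yields the surjection in the wrong direction, and the sentence ``that is the first claim'' mislabels it: a surjection $\mathcal{ST}(\mathcal B,\mathfrak a)\twoheadrightarrow S/I^B_n$ would make $S/I^B_n$ a quotient of $\mathcal{ST}(\mathcal B,\mathfrak a)$, not the other way around. This is the one genuine error; the rest of the proposal is correct.
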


This is true for free subarrangements of  any free arrangement, but for our purposes we will only need the special case of subarrangements of $\mathcal B_{\Phi^+}$.
\begin{proof}
    Per Theorem 3.9 in \cite{abe2020hessenberg}, $\mathfrak a (\der(\mathcal B_{\Phi^+}))=I^B_n$. Since $\mathcal B\subseteq \mathcal B_{\Phi^+}$, we have $\der(\mathcal B_{\Phi^+})\subseteq\der(\mathcal B)$ and so $I^B_n\subseteq \mathfrak a(\der(\mathcal B))$, which implies the first statement.

    To prove the second, we will apply Lemma \ref{arrcolonideal}. Applying Lemma \ref{BJfree} to $J=\emptyset$, we see that $\mathcal B_{\Phi^+}$ is free. Also, $\beta_\mathcal{B}|\delta^B_n$, and we have $\delta^B_n\not\in I^B_n$ as seen in the proof of Lemma \ref{colonideallemma}, so $\beta_\mathcal{B}\not\in I^B_n$. Thus the hypotheses of the lemma are satisfied and the result follows.
\end{proof}

Using the above for the free arrangement $\mathcal B_J$, we have:
\begin{corollary}
    For any $J\subseteq [n]$, $\mathcal{ST}(\mathcal B_J,\mathfrak a)=\mathbb{C}[\mathbf x_n]/(I^B_n:f_J).$
\end{corollary}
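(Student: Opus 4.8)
The plan is to apply Lemma \ref{arrcolonideal2} to the arrangement $\mathcal B = \mathcal B_J$. We have already verified in Lemma \ref{BJfree} that $\mathcal B_J$ is free, so the hypotheses of the second part of Lemma \ref{arrcolonideal2} are met, and it yields $\mathfrak a_{\mathcal B_J} = (I^B_n : \beta_{\mathcal B_J})$, hence $\mathcal{ST}(\mathcal B_J,\mathfrak a) = \mathbb{C}[\mathbf x_n]/(I^B_n : \beta_{\mathcal B_J})$. So the only thing that remains is to identify the polynomial $\beta_{\mathcal B_J}$, defined as the product of the linear forms of the hyperplanes in $\mathcal B_{\Phi^+}\setminus\mathcal B_J$, with the polynomial $f_J = \prod_{j\in J}x_j\prod_{j<i\le n}(x_j^2-x_i^2)$.

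First I would write out the complement explicitly. By definition $\mathcal B_J = \{\alpha_{ij},\overline\alpha_{ij} : j\notin J,\ i>j\}\cup\{\alpha_j : j\notin J\}$, so the hyperplanes of $\mathcal B_{\Phi^+}$ \emph{not} in $\mathcal B_J$ are exactly $\{\alpha_{ij},\overline\alpha_{ij} : j\in J,\ i>j\}\cup\{\alpha_j : j\in J\}$ (note that in a pair $i<j$ the role of ``first index'' is played by the smaller one, so a positive root $x_i-x_j$ or $x_i+x_j$ with $i<j$ is excluded precisely when $i\in J$). Taking the product of the corresponding linear forms gives
$$
\beta_{\mathcal B_J} \;=\; \prod_{j\in J} x_j \;\cdot\; \prod_{\substack{j\in J,\ i>j}} (x_j - x_i)(x_j + x_i)
\;=\; \prod_{j\in J} x_j \;\cdot\; \prod_{\substack{j\in J,\ j<i\le n}} (x_j^2 - x_i^2),
$$
which is exactly $f_J$. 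Substituting $\beta_{\mathcal B_J}=f_J$ into the conclusion of Lemma \ref{arrcolonideal2} gives $\mathcal{ST}(\mathcal B_J,\mathfrak a) = \mathbb{C}[\mathbf x_n]/(I^B_n : f_J)$, as claimed.

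The only genuine point of care is the bookkeeping in the previous paragraph: one must make sure that the indexing convention in the definition of $\mathcal B_J$ (where $j$ ranges over the ``column'' index and $i>j$) matches the convention in the definition of $f_J$ (where the product is over $j\in J$ and $j<i\le n$), and that each positive root of $\mathfrak B_n$ is counted exactly once in the complement. Since $\mathcal B_{\Phi^+}$ contains each hyperplane $\alpha_i$, $\alpha_{ij}$, $\overline\alpha_{ij}$ ($i<j$) exactly once, and $\mathcal B_J$ omits precisely those with smaller index in $J$, the complement is a disjoint description and the product telescopes into $f_J$ with no repetitions or omissions; there is no nontrivial obstacle beyond this verification.
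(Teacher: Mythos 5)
Your proof is correct and matches the paper's approach, which simply cites Lemma \ref{arrcolonideal2} applied to the free arrangement $\mathcal B_J$ (freeness being Lemma \ref{BJfree}); you usefully make explicit the bookkeeping identification $\beta_{\mathcal B_J}=f_J$ that the paper leaves implicit.
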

\subsection{An Inductive Basis}

For ease of induction, we will expand our arrangement $\mathcal B_J$ slightly to $\widetilde{\mathcal B_J}$ defined as
$$\widetilde{\mathcal B_J}=\{\alpha_{ij},\overline{\alpha}_{ij}:j\not\in J,i>j\}\cup\{\alpha_j:1\le j\le n\}.$$
The main result used in this induction is the exact sequence relating the Solomon-Terao algebras of triples of arrangements that satisfy the conditions in Theorem \ref{additiondeletion}.

Consider a triple of arrangements $(\mathcal A,\mathcal A\setminus H,\mathcal A^H)$ where $H\in\mathcal A$ and $\mathcal A$ is an arrangement in $\mathbb C^n$. If $H=H_{\alpha}$ is the zero set of the linear form $\alpha\in \mathbb{C}[\mathbf x_n]$, then we can look at the canonical projection $\pi:\mathbb{C}[\mathbf x_n]\to \overline{\mathbb{C}[\mathbf x_n]}=\mathbb{C}[\mathbf x_n]/(\alpha)$. The derivation module $\der(\mathcal A^H)$ can be thought of as a submodule of $\overline{S}\otimes_{\mathbb C}H_\alpha$.

If $\theta\in\der(\mathcal A)$, then $\theta(\alpha \mathbb{C}[\mathbf x_n])\subseteq \alpha \mathbb{C}[\mathbf x_n]$, which means the derivation $\overline\theta:\overline {\mathbb{C}[\mathbf x_n]}\to\overline{\mathbb{C}[\mathbf x_n]}$ given by $$\overline{\theta}(f+\alpha \mathbb{C}[\mathbf x_n])=\theta(f)+\alpha \mathbb{C}[\mathbf x_n]$$is well-defined. One can show that $\overline{\theta}$ is a derivation of $\mathcal A^H$ \cite[Prop. 4.44]{orlik2013arrangements}.

We have the following exact sequence \[\begin{tikzcd}
0 \arrow[r] & \der(\mathcal A\setminus H) \arrow[r, "\times\alpha"] & \der(\mathcal A) \arrow[r, "\theta\mapsto\overline\theta"] & \der(\mathcal A^H).
\end{tikzcd}\]The rightmost map is not surjective in general, but under specific conditions, we get a surjection letting us extend the sequence as follows:

The map $\theta\mapsto \overline{\theta}$ is surjective under certain conditions, and this gives rise to an exact sequence of derivation modules as follows.
\begin{lemma}[\cite{orlik2013arrangements}]
    Let $(\mathcal A,\mathcal A\setminus H,\mathcal A^H)$ be a triple of arrangements satisfying the conditions of Theorem \ref{additiondeletion}, and suppose $H=H_{\alpha}$.  Then we have an exact sequence of derivation modules 
    \[\begin{tikzcd}
0 \arrow[r] & \der(\mathcal A\setminus H) \arrow[r, "\times\alpha"] & \der(\mathcal A) \arrow[r, "\theta\mapsto\overline\theta"] & \der(\mathcal A^H) \arrow[r] & 0.
\end{tikzcd}\]
\end{lemma}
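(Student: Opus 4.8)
The plan is to observe that exactness on the left and in the middle of the sequence is already recorded just above the statement — multiplication by $\alpha$ is injective on the torsion-free module $\der(\mathbb{C}[\mathbf x_n])$, it carries $\der(\mathcal A\setminus H)$ into $\der(\mathcal A)$, and its image is precisely the kernel of $\theta\mapsto\overline{\theta}$ — so the only thing left to prove is that $\theta\mapsto\overline{\theta}$ is surjective onto $\der(\mathcal A^H)$. I would get this from a Hilbert series count, using that the hypothesis of Theorem \ref{additiondeletion} forces all of $\mathcal A$, $\mathcal A\setminus H$, and $\mathcal A^H$ to be free with coherently labelled exponents.

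Concretely, write $H=H_\alpha$ with $\alpha$ a linear form, so that multiplication by $\alpha$ is read as the degree-$0$ map $\der(\mathcal A\setminus H)(-1)\xrightarrow{\times\alpha}\der(\mathcal A)$, and let $\mathcal I\subseteq\der(\mathcal A^H)$ denote the image of the restriction map. By the already-established exactness,
$$0\longrightarrow \der(\mathcal A\setminus H)(-1)\xrightarrow{\ \times\alpha\ }\der(\mathcal A)\longrightarrow\mathcal I\longrightarrow 0$$
is a short exact sequence of graded modules, whence $\operatorname{Hilb}(\mathcal I;q)=\operatorname{Hilb}(\der(\mathcal A);q)-q\cdot\operatorname{Hilb}(\der(\mathcal A\setminus H);q)$. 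Now I would plug in Theorem \ref{additiondeletion}: with $\exp(\mathcal A)=(e_1,\dots,e_{n-1},e_n)$, $\exp(\mathcal A\setminus H)=(e_1,\dots,e_{n-1},e_n-1)$, and $\exp(\mathcal A^H)=(e_1,\dots,e_{n-1})$ (viewing $\mathcal A^H$ as an arrangement in $H\cong\mathbb{C}^{n-1}$), and using $\operatorname{Hilb}(\der(\mathcal C);q)=(1-q)^{-m}\sum_i q^{d_i}$ for a free arrangement $\mathcal C$ in $\mathbb{C}^m$ with exponents $(d_1,\dots,d_m)$, the telescoping
$$\operatorname{Hilb}(\mathcal I;q)=\frac{1}{(1-q)^n}\Bigl(\sum_{i=1}^n q^{e_i}-\sum_{i=1}^{n-1}q^{e_i+1}-q^{e_n}\Bigr)=\frac{1}{(1-q)^{n-1}}\sum_{i=1}^{n-1}q^{e_i}$$
equals exactly $\operatorname{Hilb}(\der(\mathcal A^H);q)$. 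Since $\mathcal I$ is a graded submodule of $\der(\mathcal A^H)$ with the same Hilbert series and all graded pieces finite-dimensional, $\mathcal I=\der(\mathcal A^H)$; that is, $\theta\mapsto\overline{\theta}$ is onto, and the four-term sequence is exact.

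The part that needs care is the bookkeeping with Theorem \ref{additiondeletion}: its hypothesis is that two of the three freeness-with-exponents statements hold, and one must make sure the three exponent multisets are labelled compatibly (all sharing $e_1,\dots,e_{n-1}$, with $\mathcal A\setminus H$ and $\mathcal A$ differing only in the last slot and $\mathcal A^H$ being the truncation) so that the telescoping above genuinely collapses to $(1-q)\sum_{i<n}q^{e_i}$. Beyond getting this indexing right, the remaining ingredients — additivity of Hilbert series on the displayed short exact sequence, the Hilbert series of a free graded module, and the left-exactness quoted above — are all routine, so I do not anticipate a serious obstacle.
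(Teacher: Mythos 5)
The paper does not reproduce a proof of this lemma; it is stated with a citation to Orlik--Terao \cite{orlik2013arrangements}, so there is no internal proof to compare against. That said, your argument is correct and is exactly the standard degree-counting proof one finds in the reference: left-exactness is quoted from the preceding display, the three arrangements are free with exponents matching in the pattern dictated by Theorem \ref{additiondeletion}, the short exact sequence $0\to\der(\mathcal A\setminus H)(-1)\to\der(\mathcal A)\to\mathcal I\to 0$ gives $\operatorname{Hilb}(\mathcal I;q)=\operatorname{Hilb}(\der(\mathcal A);q)-q\cdot\operatorname{Hilb}(\der(\mathcal A\setminus H);q)$, the telescoping $(1-q)^{-n}\bigl((1-q)\sum_{i<n}q^{e_i}\bigr)=(1-q)^{-(n-1)}\sum_{i<n}q^{e_i}$ is computed correctly, and since the graded pieces of $\der(\mathcal A^H)\subseteq\der(\overline{\mathbb{C}[\mathbf x_n]})$ are finite-dimensional, a graded submodule with the same Hilbert series must be the whole module. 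The small points you flag as needing care (the degree shift $\der(\mathcal A\setminus H)(-1)$, the coherent labelling of exponent triples, injectivity of $\times\alpha$ from torsion-freeness, and the fact that $\times\alpha$ lands in $\der(\mathcal A)$) are all handled correctly. The only cosmetic gap is that you invoke, without proof, the left-exact sequence $0\to\der(\mathcal A\setminus H)\to\der(\mathcal A)\to\der(\mathcal A^H)$; the paper likewise quotes this without proof (from \cite[Prop.~4.44 et seq.]{orlik2013arrangements}), so this is consistent with the level of detail expected.
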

Finally, we derive an exact sequence of Solomon-Terao algebras for certain subarrangements of $\mathcal B_{\Phi^+}$ that will be crucial in inductively determining $\mathcal{ST}(\widetilde{\mathcal B_J},\mathfrak a)$. This is the type B analog of \cite[Lemma 7.1]{angarone2025superspace}.

\begin{lemma}
    Suppose $\mathcal A$ is a subarrangement of $\mathcal B_{\Phi^+}$ containing the hyperplane $H=H_{\alpha}$, and that the triple $(\mathcal A,\mathcal A\setminus H,\mathcal A^H)$ satisfying the conditions of Theorem \ref{additiondeletion}. Then we have an exact sequence
    \[\begin{tikzcd}
0 \arrow[r] & {\mathcal{ST}(\mathcal A\setminus H,\mathfrak a)} \arrow[r, "\times\alpha"] & {\mathcal{ST}(\mathcal A,\mathfrak a)} \arrow[r] & {\mathcal{ST}(\mathcal A^H,\mathfrak a)} \arrow[r] & 0
\end{tikzcd}\]where the second map is the canonical projections.
\end{lemma}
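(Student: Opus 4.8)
The plan is to deduce this exact sequence of Solomon--Terao algebras from the exact sequence of derivation modules in the preceding lemma by applying the homogeneous map $\mathfrak a \colon \der(\mathbb C[\mathbf x_n]) \to \mathbb C[\mathbf x_n]$, $\partial_i \mapsto x_i$, and tracking how $\mathfrak a$ interacts with multiplication by $\alpha$ and with the quotient-by-$(\alpha)$ passage. Concretely, since the triple $(\mathcal A,\mathcal A\setminus H,\mathcal A^H)$ satisfies the conditions of Theorem \ref{additiondeletion}, all three arrangements are free, so by Lemma \ref{arrcolonideal2} (and the complete-intersection/Poincar\'e-duality properties recalled in the Solomon--Terao subsection) each of $\mathcal{ST}(\mathcal A,\mathfrak a)$, $\mathcal{ST}(\mathcal A\setminus H,\mathfrak a)$, $\mathcal{ST}(\mathcal A^H,\mathfrak a)$ is a finite-dimensional Poincar\'e duality algebra, and we have a short exact sequence of free $\mathbb C[\mathbf x_n]$-modules
\[\begin{tikzcd}
0 \arrow[r] & \der(\mathcal A\setminus H) \arrow[r, "\times\alpha"] & \der(\mathcal A) \arrow[r, "\theta\mapsto\overline\theta"] & \der(\mathcal A^H) \arrow[r] & 0.
\end{tikzcd}\]

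First I would establish the right-hand surjection $\mathcal{ST}(\mathcal A,\mathfrak a)\twoheadrightarrow\mathcal{ST}(\mathcal A^H,\mathfrak a)$. The key compatibility is that if $\theta\in\der(\mathcal A)$ and $\overline\theta\in\der(\mathcal A^H)$ is its image, then $\mathfrak a(\overline\theta)$ equals $\mathfrak a(\theta)$ reduced modulo $(\alpha)$: this is because, working in coordinates adapted to the hyperplane $H=H_\alpha$, passing to $\der(\mathcal A^H)$ amounts to reducing the coefficient polynomials mod $\alpha$ and discarding the $\partial_\alpha$-component, and $\mathfrak a$ on $\mathcal A^H$ is the corresponding map on the ambient space of $H$. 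Since the derivation-module map is surjective, it follows that $\mathfrak a_{\mathcal A^H} = \pi(\mathfrak a_{\mathcal A})$ where $\pi\colon\mathbb C[\mathbf x_n]\to\mathbb C[\mathbf x_n]/(\alpha)$ is the canonical projection; hence the induced map $\mathbb C[\mathbf x_n]/\mathfrak a_{\mathcal A}\to (\mathbb C[\mathbf x_n]/(\alpha))/\mathfrak a_{\mathcal A^H}=\mathcal{ST}(\mathcal A^H,\mathfrak a)$ is a well-defined surjection, and its kernel is $(\mathfrak a_{\mathcal A}+(\alpha))/\mathfrak a_{\mathcal A}$, i.e. the image of multiplication by $\alpha$.

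Next I would identify the left-hand map. Multiplication by $\alpha$ on $\der(\mathbb C[\mathbf x_n])$ carries $\der(\mathcal A\setminus H)$ into $\der(\mathcal A)$, and applying $\mathfrak a$ gives $\mathfrak a(\alpha\theta)=\alpha\,\mathfrak a(\theta)$; therefore $\alpha\cdot\mathfrak a_{\mathcal A\setminus H}\subseteq\mathfrak a_{\mathcal A}$, so multiplication by $\alpha$ descends to a map $\times\alpha\colon \mathcal{ST}(\mathcal A\setminus H,\mathfrak a)\to\mathcal{ST}(\mathcal A,\mathfrak a)$. By the previous paragraph its image is exactly the kernel of the projection to $\mathcal{ST}(\mathcal A^H,\mathfrak a)$, so the sequence is exact in the middle and on the right; it only remains to prove injectivity of $\times\alpha$, equivalently that $\alpha f\in\mathfrak a_{\mathcal A}$ forces $f\in\mathfrak a_{\mathcal A\setminus H}$, i.e. $\mathfrak a_{\mathcal A\setminus H}=(\mathfrak a_{\mathcal A}:\alpha)$. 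This follows from Lemma \ref{arrcolonideal} (or directly Lemma \ref{arrcolonideal2}): $\mathcal A\setminus H$ is a free subarrangement of the free arrangement $\mathcal A$ with $Q(\mathcal A)/Q(\mathcal A\setminus H)=\alpha\notin\mathfrak a_{\mathcal A}$ (the non-membership holds because $\alpha\mid\delta^B_n$ and $\delta^B_n\notin I^B_n\subseteq$ the relevant ideal, or more simply by a dimension count: $\dim_{\mathbb C}\mathcal{ST}(\mathcal A\setminus H,\mathfrak a)<\dim_{\mathbb C}\mathcal{ST}(\mathcal A,\mathfrak a)$, which forces $\alpha\notin\mathfrak a_{\mathcal A}$ since otherwise the projection would be an isomorphism), giving $\mathfrak a_{\mathcal A\setminus H}=\mathfrak a_{\mathcal A}:\alpha$ as desired.

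The main obstacle I anticipate is the bookkeeping in the surjectivity step: one must verify carefully that under the identification of $\der(\mathcal A^H)$ as a submodule of $\overline{\mathbb C[\mathbf x_n]}\otimes_{\mathbb C}H_\alpha$, the Solomon--Terao map $\mathfrak a$ attached to $\mathcal A^H$ genuinely corresponds to $\partial_i\mapsto x_i$ in coordinates on $H_\alpha$ and hence commutes with reduction mod $\alpha$ of $\mathfrak a_{\mathcal A}$ — for general $\mathcal A$ this requires choosing $\alpha$ as a coordinate and is slightly delicate, but for our subarrangements of $\mathcal B_{\Phi^+}$ the hyperplanes are coordinate hyperplanes $x_i=0$ or reflection hyperplanes $x_i\pm x_j=0$, so a linear change of coordinates reduces to the coordinate-hyperplane case and makes the identification transparent. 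Everything else is formal diagram-chasing combined with the commutative-algebra inputs (Lemma \ref{arrcolonideal}, Lemma \ref{PDA}) already available.
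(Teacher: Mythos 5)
Your proposal is correct and follows essentially the same route as the paper: both start from the short exact sequence of derivation modules (and its commutative algebra shadow, the standard sequence $0\to \mathbb{C}[\mathbf{x}_n]/(\mathfrak a_{\mathcal A}:\alpha)\to\mathbb{C}[\mathbf{x}_n]/\mathfrak a_{\mathcal A}\to\mathbb{C}[\mathbf{x}_n]/(\mathfrak a_{\mathcal A}+(\alpha))\to 0$), identify the left term as $\mathcal{ST}(\mathcal A\setminus H,\mathfrak a)$ via the colon-ideal identity from Lemma~\ref{arrcolonideal}, and identify the right term as $\mathcal{ST}(\mathcal A^H,\mathfrak a)$ using the commutative square relating $\theta\mapsto\overline\theta$ with reduction modulo $(\alpha)$ under $\mathfrak a$. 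The one spot worth tightening is your parenthetical justification that $\alpha\notin\mathfrak a_{\mathcal A}$: the cleanest version is to use Lemma~\ref{arrcolonideal2} to write $\mathfrak a_{\mathcal A}=I^B_n:\beta_{\mathcal A}$, so $\alpha\in\mathfrak a_{\mathcal A}$ would give $\alpha\beta_{\mathcal A}\in I^B_n$, contradicting $\alpha\beta_{\mathcal A}\mid\delta^B_n\notin I^B_n$; and in the dimension-count variant one should compare $\dim\mathcal{ST}(\mathcal A^H,\mathfrak a)$ with $\dim\mathcal{ST}(\mathcal A,\mathfrak a)$ (not $\mathcal A\setminus H$ with $\mathcal A$), since it is the surjection onto $\mathcal{ST}(\mathcal A^H,\mathfrak a)$ that would be forced to be an isomorphism.
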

\begin{proof}
    We use an argument similar to that in \cite{angarone2025superspace}. We have the following exact sequence from the definition of colon ideals:\[\begin{tikzcd}
0 \arrow[r] & \mathbb{C}[\mathbf x_n]/(\mathfrak a_{\mathcal A}:(\alpha)) \arrow[r, "\times\alpha"] & \mathbb{C}[\mathbf x_n]/\mathfrak a_{\mathcal A} \arrow[r] & \mathbb{C}[\mathbf x_n]/(\mathfrak a_{\mathcal A}+(\alpha)) \arrow[r] & 0.\end{tikzcd}\]We note that $\mathbb{C}[\mathbf x_n]/\mathfrak a_{\mathcal A}=\mathcal{ST}(\mathcal A,\mathfrak a)$ by the definition of Solomon-Terao algebras, and $\mathbb{C}[\mathbf x_n]/(\mathfrak a_{\mathcal A}:(\alpha))=\mathcal{ST}(\mathcal A\setminus H)$ by Lemma \ref{arrcolonideal}. To establish the claimed exact sequence, it suffices to show that $\mathbb{C}[\mathbf x_n]/(\mathfrak a_{\mathcal A}+(\alpha))=\mathcal{ST}(\mathcal A^H,\mathfrak a)$.

To see why this holds, we note that the following diagram commutes
\[
\begin{tikzcd}
\der(\mathcal A) \arrow[d, "\mathfrak a"'] \arrow[r, "\theta\mapsto\overline\theta"] & \der(\mathcal A^H) \arrow[d, "\mathfrak a"] \\
\mathbb{C}[\mathbf x_n] \arrow[r, "f\mapsto f+(\alpha)"]                                                 & \mathbb{C}[\mathbf x_n]/(\alpha)                               
\end{tikzcd}
\]where the horizontal maps are both surjections. Therefore $\mathfrak a(\der(\mathcal A^H))$ can be calculated by starting at $\der(\mathcal A)$ in the above diagram and going down, then right, which yields $$\mathfrak a(\der(\mathcal A^H))=\mathfrak a_{\mathcal A}+(\alpha)\subseteq \mathbb{C}[\mathbf x_n]/(\alpha)$$and therefore $$\mathcal{ST}(\mathcal A^H,\mathfrak a)=\mathbb{C}[\mathbf x_n]/(\mathfrak a(\der(\mathcal A^H))=(\mathbb{C}[\mathbf x_n]/(\alpha))/(\mathfrak a_{\mathcal A}+(\alpha))=\mathbb{C}[\mathbf x_n]/(\mathfrak a_{\mathcal A}+(\alpha)),$$which finishes the argument as explained above.
\end{proof}

Since we want to apply the above results to $\widetilde{\mathcal B_J}$, let us also establish the freeness of this arrangement. We will use the same strategy as \ref{BJfree}: using explicit generators of $\der(\widetilde{\mathcal B_J})$ to apply Saito's criterion \ref{saito}.

\begin{lemma}
    The arrangement $\widetilde{\mathcal B_J}$ is free with exponents $(b_{J,1},\cdots, b_{J,n})$ where $$b_{J,i}=2\left|\{1,\ldots, i-1\}\setminus J\right|+1.$$
\end{lemma}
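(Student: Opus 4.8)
The plan is to construct explicit homogeneous generators of $\der(\widetilde{\mathcal B_J})$ and verify Saito's criterion (Theorem \ref{saito}), exactly in the spirit of the proof of Lemma \ref{BJfree}. The arrangement $\widetilde{\mathcal B_J}$ differs from $\mathcal B_J$ only by the extra coordinate hyperplanes $\alpha_j=H_{x_j}$ for $j\in J$, so I expect the basis to be a modification of the $\rho^J_i$'s that accounts for these. Concretely, for $i\notin J$ I would keep $\rho^J_i$ essentially as before, while for $i\in J$ I would replace $\prod_{j\notin J,\,j<i}(x_j^2-x_i^2)\cdot\partial_i$ by $\left(\prod_{j\notin J,\,j<i}(x_j^2-x_i^2)\right)x_i\cdot\partial_i$ (multiplying by $x_i$) so that the derivation is tangent to $H_{x_i}$ as well. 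Call these $\widetilde\rho^J_i$.

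The three conditions of Saito's criterion would then be checked in order. First, the degree count: for $i\notin J$ the degree is unchanged at $\stB_i(J)=2|\{1,\dots,i-1\}\setminus J|+1 = b_{J,i}$ (since $[i\notin J]=1$), and for $i\in J$ the new factor $x_i$ raises the degree to $2|\{1,\dots,i-1\}\setminus J|+1=b_{J,i}$ as well; so in all cases $\deg\widetilde\rho^J_i=b_{J,i}$, and $\sum_i b_{J,i}$ must be shown to equal $|\widetilde{\mathcal B_J}|$. This is a bookkeeping identity: $|\widetilde{\mathcal B_J}|=n+2\sum_{j\notin J}|\{i:i>j\}| = n+2\sum_{j\notin J}(n-j)$, and one checks that summing $2|\{1,\dots,i-1\}\setminus J|$ over $i\in[n]$ gives $2\sum_{j\notin J}(n-j)$ by counting, for each $j\notin J$, the number of $i>j$. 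Second, linear independence over $\mathbb C[\mathbf x_n]$: each $\widetilde\rho^J_i$ is an $\mathbb C[\mathbf x_n]$-combination of $\partial_i,\dots,\partial_n$ with a nonzero (monomial) coefficient on $\partial_i$, so the coefficient matrix is triangular with nonzero diagonal.

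The main work — and the step I expect to be most delicate — is condition (1): verifying $\widetilde\rho^J_i\in\der(\widetilde{\mathcal B_J})$, i.e. $\alpha\mid\widetilde\rho^J_i(\alpha)$ for every $H_\alpha\in\widetilde{\mathcal B_J}$. For $\alpha$ among the hyperplanes already in $\mathcal B_J$ (the $x_p\pm x_q$ with $p\notin J$, $p<q$, and the $x_p$ with $p\notin J$), the computation is a mild variant of the case analysis in Lemma \ref{BJfree}: for $i\notin J$ it is literally the same argument, and for $i\in J$ the extra factor $x_i$ does not disturb divisibility (when $\widetilde\rho^J_i(\alpha)\neq 0$ one still gets $x_i=x_q$ forced and the factor $x_p^2-x_q^2$ appears). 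The genuinely new case is $\alpha=x_i$ for $i\in J$: here $\widetilde\rho^J_i(x_i)=\left(\prod_{j\notin J,\,j<i}(x_j^2-x_i^2)\right)x_i$, which is visibly divisible by $x_i$ — this is precisely why the factor $x_i$ was inserted — and $\widetilde\rho^J_k(x_i)$ for $k\neq i$ is either $0$ (if $k>i$, or $k\in J$ with $k\neq i$) or contains $x_i^2$ as a factor when $k\notin J$ and $k\le i$. Once all cases are dispatched, Saito's criterion yields that $\{\widetilde\rho^J_1,\dots,\widetilde\rho^J_n\}$ is a free basis and $\exp(\widetilde{\mathcal B_J})=(b_{J,1},\dots,b_{J,n})$, which is the assertion. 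I do not anticipate any conceptual obstacle here — only the need to organize the case analysis carefully, particularly keeping straight when indices lie in $J$ versus outside it.
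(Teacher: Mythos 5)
Your proposal is correct and follows the same Saito-criterion strategy as the paper. The one substantive difference is the choice of derivation basis for $i\in J$: the paper takes the uniform sum $\mu^J_i=\sum_{k\ge i}\bigl(\prod_{j\notin J,\,j<i}(x_j^2-x_k^2)\bigr)x_k\,\partial_k$ for \emph{every} index $i$, whereas you keep $\rho^J_i$ for $i\notin J$ but replace $\rho^J_i$ by $\bigl(\prod_{j\notin J,\,j<i}(x_j^2-x_i^2)\bigr)x_i\,\partial_i$ when $i\in J$ --- which is precisely the $k=i$ summand of $\mu^J_i$. Both families satisfy Saito's three conditions and therefore both give valid free bases of $\der(\widetilde{\mathcal B_J})$; the paper's uniform formula is slightly cleaner to state, while your modification re-uses the case analysis from Lemma~\ref{BJfree} more directly. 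One small inaccuracy in your write-up: for $k\notin J$ with $k\le i$ and $i\in J$, you assert that $\widetilde\rho^J_k(x_i)$ ``contains $x_i^2$ as a factor''; in fact $\widetilde\rho^J_k(x_i)=\bigl(\prod_{m\notin J,\,m<k}(x_m^2-x_i^2)\bigr)x_i$ contains $x_i$ only to the first power (each factor $x_m^2-x_i^2$ involves $x_i$ but is not divisible by it). This is still sufficient for $x_i\mid\widetilde\rho^J_k(x_i)$, so the argument survives as intended.
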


\begin{proof}
    As in the proof of Lemma \ref{BJfree}, we will use Saito's criterion by demonstrating an explicit set of generators of $\der(\widetilde{\mathcal B_J})$.

    Let $$\mu^J_i=
        \sum_{k=i}^n \left(\prod_{\substack{j\not\in J\\j<i}}(x_j^2-x_k^2)\right)x_k\cdot \partial_k
    $$

    Clearly each $\mu^J_i$ is homogeneous of degree $2|\{1,\ldots, i-1\}\setminus J|+1$ which equals the number of hyperplanes in $\widetilde{\mathcal B_J}$ of the form $H_{x_j-x_i}$, $H_{x_j+x_i}$ or $H_{x_i}$, which implies $\deg \mu^J_1+\cdots+\mu^J_n$ equals to the total number of hyperplanes, $|\widetilde{\mathcal B_J}|$. Their $\mathbb{C}[\mathbf x_n]$-linear independence follows by triangularity: each $\mu^J_i$ is an $\mathbb{C}[\mathbf x_n]$-linear combination of $\partial_i,\ldots,\partial_n$ with non-zero coefficients.

    To prove they all belong to $\der(\widetilde{\mathcal B_J})$, we show that $\alpha|\mu^J_i(\alpha)$ for any hyperplane $H_\alpha$ in $\widetilde{\mathcal B_J}$. 

    \begin{itemize}
        \item Suppose $\alpha=x_p$ for some $p$. If $p<i$, $\mu^J_i(x_p)=0$, and if $p\ge i$, $$\mu^J_i(x_p)=\prod_{\substack{j\not\in J\\j<i}}(x_j^2-x_p^2)\cdot x_p,$$ and in either case, $x_p|\mu^J_i(x_p)$.

        \item Suppose $\alpha=x_p\pm x_q$: we must have $p\not \in J$ and $p<q$.  If $i\le p<q$, then$$\mu^J_i(x_p\pm x_q)=\prod_{\substack{j\not\in J\\j<i}}(x_j^2-x_p^2)\cdot x_p \pm\prod_{\substack{j\not\in J\\j<i}}(x_j^2-x_q^2)\cdot x_q.$$ This vanishes for $x_p=\mp x_q$ which proves the divisibility. If instead $p<i\le q$, then $$\rho^J_i(x_p\pm x_q)=\pm\prod_{\substack{j\not\in J\\j<i}}(x_j^2-x_q^2)\cdot x_q$$which is evidently divisible by $x_p^2-x_q^2$. Finally, if $p<q<i$, then $\mu^J_i(x_p\pm x_q)=0$, and the conclusion follows.
    \end{itemize}
\end{proof}

We are now in a position to obtain a basis of $\mathcal{ST}(\widetilde{\mathcal B_J},\mathfrak a)$ by using the exact sequence deduced above, which subsequently yields a basis for the Solomon-Terao algebra associated with the closely related arrangement $\mathcal B_J$.
\begin{lemma}
    For any $J\subseteq [n]$, the set $$\widetilde{\mathcal M}(J)=\{p_1p_2\cdots p_n: p_i\in s_{J,i}\}$$descends to a basis of $\mathcal{ST}(\widetilde{\mathcal B_J},\mathfrak a)$.
\end{lemma}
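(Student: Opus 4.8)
\textit{Proof sketch.} The plan is to induct on $n$, strip off the variable $x_n$, and build the basis one hyperplane at a time through the exact sequence of Solomon--Terao algebras proved above.

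First, the bookkeeping. By the previous lemma $\widetilde{\mathcal B_J}$ is free with exponents $(b_{J,1},\dots,b_{J,n})$, so $\mathfrak a_{\widetilde{\mathcal B_J}}$ is a complete intersection and $\mathcal{ST}(\widetilde{\mathcal B_J},\mathfrak a)$ has Hilbert series $\prod_{i=1}^n[b_{J,i}+1]_q$; meanwhile $s_{J,i}$ contains exactly one polynomial of each degree $0,1,\dots,b_{J,i}$, so $\widetilde{\mathcal M}(J)$ has cardinality $\prod_i(b_{J,i}+1)$ and the same degree generating function. Hence it suffices to realize $\widetilde{\mathcal M}(J)$ as an honest basis (the Hilbert series serving as a running consistency check). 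Observe also that $\widetilde{\mathcal B_J}$, the integers $b_{J,i}$, and the sets $s_{J,i}$ for $i\le n-1$ depend only on $J\cap[n-1]$, so the inductive hypothesis supplies a basis of $\mathcal{ST}(\widetilde{\mathcal B_{J\cap[n-1]}},\mathfrak a)$ consisting of the products $m=p_1\cdots p_{n-1}$ with $p_i\in s_{J,i}$. The base case $n=1$ is immediate: $\widetilde{\mathcal B_J}=\{H_{x_1}\}$, $\mathcal{ST}=\mathbb C[x_1]/(x_1^2)$, and $\widetilde{\mathcal M}(J)=\{1,x_1\}$.

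Write $\{1,\dots,n-1\}\setminus J=\{j_1<\cdots<j_r\}$, set $N=b_{J,n}=2r+1$, and recall from the definition of $s_{J,n}=\{e_0,e_1,\dots,e_N\}$ that $e_0=1$ and $e_\ell=e_{\ell-1}\beta_\ell$, where $(\beta_1,\dots,\beta_N)=(x_n,\;x_{j_1}+x_n,\;x_{j_1}-x_n,\;\dots,\;x_{j_r}+x_n,\;x_{j_r}-x_n)$. Let $\mathcal A_0$ be the subarrangement of $\widetilde{\mathcal B_J}$ of hyperplanes not involving $x_n$; appending $\partial_n$ to a Saito basis of $\der(\widetilde{\mathcal B_{J\cap[n-1]}})$ and applying Theorem~\ref{saito} shows $\mathcal A_0$ is free with exponents $(b_{J,1},\dots,b_{J,n-1},0)$, and $\mathcal{ST}(\mathcal A_0,\mathfrak a)\cong\mathcal{ST}(\widetilde{\mathcal B_{J\cap[n-1]}},\mathfrak a)$, so by induction it has basis $\widetilde{\mathcal M}(J\cap[n-1])$. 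Now build a chain $\mathcal A_0\subset\mathcal A_1\subset\cdots\subset\mathcal A_N=\widetilde{\mathcal B_J}$ by adjoining the hyperplanes $H_{\beta_1},\dots,H_{\beta_N}$ \emph{in reverse order}: $\mathcal A_k:=\mathcal A_{k-1}\cup H_{\alpha_k}$ with $\alpha_k:=\beta_{N-k+1}$. The crux is that at every stage $\mathcal A_k^{H_{\alpha_k}}=\widetilde{\mathcal B_{J\cap[n-1]}}$: substituting the equation of $H_{\alpha_k}$ leaves the hyperplanes of $\mathcal A_0$ untouched and collapses each previously adjoined $x_n$-hyperplane onto one of the forms $x_a\pm x_b$ or $x_c$ with $a,b\in\{j_1,\dots,j_r\}\subseteq[n-1]\setminus J$ and $c\in[n-1]$, each of which already lies in $\mathcal A_0$ — a short sign check in the three cases $\alpha_k\in\{x_n,\;x_{j_p}+x_n,\;x_{j_p}-x_n\}$. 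Granting this, Theorem~\ref{additiondeletion} applied inductively (from $\mathcal A_0$, free with exponents $(\dots,0)$, and $\mathcal A_k^{H_{\alpha_k}}$ free with exponents $(\dots)$) shows each $\mathcal A_k$ is free with exponents $(b_{J,1},\dots,b_{J,n-1},k)$; since each $\mathcal A_k$ is a subarrangement of $\mathcal B_{\Phi^+}$ and $\alpha_k$ is a nonzero linear form not lying in the degree-$\ge 2$ ideal $\mathfrak a_{\mathcal A_k}$, the hypotheses of the Solomon--Terao exact sequence lemma hold, giving
\[0\longrightarrow\mathcal{ST}(\mathcal A_{k-1},\mathfrak a)\xrightarrow{\ \times\alpha_k\ }\mathcal{ST}(\mathcal A_k,\mathfrak a)\longrightarrow\mathcal{ST}(\mathcal A_k^{H_{\alpha_k}},\mathfrak a)\longrightarrow 0.\]
As $\mathcal{ST}(\mathcal A_k^{H_{\alpha_k}},\mathfrak a)\cong\mathcal{ST}(\widetilde{\mathcal B_{J\cap[n-1]}},\mathfrak a)$ has basis $\widetilde{\mathcal M}(J\cap[n-1])$ by induction, and these polynomials do not involve $x_n$ and so lift tautologically to $\mathcal{ST}(\mathcal A_k,\mathfrak a)$, a straightforward induction along the chain gives that $\mathcal{ST}(\mathcal A_k,\mathfrak a)$ has basis $\{m\cdot q:m\in\widetilde{\mathcal M}(J\cap[n-1]),\ q\in Q_k\}$ with $Q_0=\{1\}$ and $Q_k=\alpha_kQ_{k-1}\cup\{1\}$. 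Because the $H_{\beta_\ell}$ were adjoined in reverse, $Q_k=\{1,\beta_{N-k+1},\beta_{N-k+1}\beta_{N-k+2},\dots,\beta_{N-k+1}\cdots\beta_N\}$, hence $Q_N=\{e_0,e_1,\dots,e_N\}=s_{J,n}$. Taking $k=N$ and using $s_{J,i}=s_{J\cap[n-1],i}$ for $i<n$ identifies the resulting basis of $\mathcal{ST}(\widetilde{\mathcal B_J},\mathfrak a)=\mathcal{ST}(\mathcal A_N,\mathfrak a)$ with $\{p_1\cdots p_n:p_i\in s_{J,i}\}=\widetilde{\mathcal M}(J)$.

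The main obstacle is getting the order of the additions right: the telescoping exact sequences reproduce exactly the product set $\prod_i s_{J,i}$ — rather than merely some basis — only when the $x_n$-hyperplanes are adjoined in the reverse of the order in which their linear forms multiply to build $s_{J,n}$; this is what makes the image of $\times\alpha_k$ telescope and leaves precisely the single missing constant to be filled in by the lifted restriction basis. The only other place real work is needed is the restriction identity $\mathcal A_k^{H_{\alpha_k}}=\widetilde{\mathcal B_{J\cap[n-1]}}$, which is where the type $B$ root structure — that $x_{j_q}\pm x_{j_p}$ and $x_{j_p}$ all belong to the smaller arrangement whenever $j_p,j_q\notin J$ — is genuinely used; the freeness and Hilbert series inputs are all routine consequences of the earlier lemmas. $\qed$
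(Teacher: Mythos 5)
Your proof is correct and takes essentially the same approach as the paper: both induct on $n$, form a chain of arrangements from the one with no $x_n$-hyperplanes up to $\widetilde{\mathcal B_J}$, prove freeness at each link via Theorem~\ref{additiondeletion}, and apply the Solomon--Terao exact sequence so the bases telescope. In particular, you correctly identify the crux that the $x_n$-hyperplanes must be adjoined in reverse of the order $x_n,\ x_{j_1}+x_n,\ x_{j_1}-x_n,\ \dots$ so that the product set built from lifted restriction bases reproduces exactly $s_{J,n}$; the paper expresses this as deleting in forward order and working back up, which is the same chain.
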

\begin{proof}
    We will use induction on $n$. Suppose $[n-1]\setminus J=\{j_1<\cdots<j_r\}$. We let $\widetilde{\mathcal B_{J'}}$ denote the arrangement associated to $J'=J\setminus\{\alpha_{in},\overline\alpha_{in},\alpha_n:1\le i< n\}$ considered as an arrangement in $\mathbb C^{n-1}$.

    Let us also define the sequence of arrangements $$\mathcal B_0,\mathcal B_1,\mathcal B_2,\ldots, \mathcal B_{2r+1}$$where $\mathcal B_i$ is formed from $\widetilde{\mathcal B_J}$ by deleting the first $i$ hyperplanes in the sequence $$\alpha_n,\overline\alpha_{j1n},\alpha_{j1,n},\overline\alpha_{j2,n},\alpha_{j2,n},\ldots,\overline\alpha_{jr,n},\alpha_{jr,n}.$$ Note that $\mathcal B_0=\widetilde{\mathcal B_J}$.
    we claim that $\mathcal B_i$ is free with exponents $(b_{J,1},\ldots, b_{J,n-1},b_{J,n}-i)$. This is clear for $i=0$. Assuming this is true for some $i$, consider the triple $(\mathcal B_i,\mathcal B_{i+1},\widetilde{\mathcal B_{J'}})$. Suppose $\mathcal B_{i+1}$ is obtained from $\mathcal B_i$ by deleting the hyperplane $\alpha$. One can check that $(\mathcal B_{i+1})^{\alpha}=\widetilde{\mathcal B_{J'}}$. Further, $\widetilde{\mathcal B_{J'}}$ is free with exponents $(b_{J,1},\cdots, b_{J,n-1})$, so this triple satisfies the hypotheses of Theorem \ref{additiondeletion} and thus $\mathcal B_{i+1}$ must be free with exponents $(b_{J,1},\ldots, b_{J,n}-(i+1))$. This gives rise to the short exact sequence $$0\longrightarrow\mathcal{ST}(\mathcal B_{i+1},\mathfrak a)\stackrel{\times f}{\longrightarrow}\mathcal{ST}(\mathcal B_{i},\mathfrak a)\longrightarrow \mathcal{ST}
    \left(\widetilde{\mathcal B_{J'}},\mathfrak a\right)\longrightarrow 0$$where $\alpha=H_f$.

    By induction hypothesis, $\mathcal{ST}(\widetilde{\mathcal B_{J'}},\mathfrak a)$ has a basis consisting of images of polynomials of the form $p_1\cdots p_{n-1}$ where $p_i\in s_{J,i}$. Further, note that the same polynomials form a basis for $\mathcal {ST}(\mathcal B_{2r+1},\mathfrak a)$: indeed, $\mathcal B_{2r+1}$ has the same hyperplanes as $\widetilde{\mathcal B_{J'}}$, which means a basis $\der(\mathcal B_{2r+1})$ is obtained by taking a basis of $\der(\widetilde{\mathcal B_{J'}})$ and adding $\partial_n$. Therefore 
    \begin{multline*}
        \mathcal{ST}(\mathcal B_{2r+1},\mathfrak a)=\mathbb C[x_1,\ldots, x_n]/(\mathfrak a(\der(\widetilde{\mathcal B_{J'}}))+(x_n))\\
        \cong \mathbb C[x_1,\ldots, x_{n-1}]/(\mathfrak a(\der(\widetilde{\mathcal B_{J'}}))=\mathcal{ST}(\widetilde{\mathcal B_{J'}},\mathfrak a).
    \end{multline*}

    Now the above exact sequence for $i=2r$ implies 
    polynomials of the form $p_1\cdots p_{n-1}q$  where $p_i\in s_{J,i}$ and $q\in \{1,x_{j_r}-x_i\}$ form a basis of $\mathcal{ST}(\mathcal B_{2r},\mathfrak a)$. Using this and the above exact sequence for $i=2r-1$, we see that polynomials of the form $p_1\cdots, p_{n-1}q$ where $p_i\in s_{J,i}$ and $q\in \{1,x_{j_r}+x_i,x_{j_r}^2-x_{i}^2\}$ yield a basis for $\mathcal{ST}(\mathcal B_{2r-1},\mathfrak a)$. Iterating this process, we eventually arrive at the basis $\{p_1\cdots p_nq:p_i\in s_{J,i},q\in s_{J,n}\}$ for $\mathcal{ST}(\mathcal B_0,\mathfrak a)=\mathcal{ST}(\widetilde{\mathcal B_{J}},\mathfrak a)$, which proves our claim.
\end{proof}
\begin{theorem}
    The set of polynomials $$\mathcal M(J)=\{p_1p_2\cdots p_n:p_{j,i}\in s_{j,i}\text{ if }i\not\in J,p_{J,i}\in t_{J,i}\text{ if }i\in J\}$$descend to a basis for $\mathbb{C}[\mathbf x_n]/(I^B_n:f_J)$.
\end{theorem}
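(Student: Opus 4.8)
The plan is to realize $\mathbb{C}[\mathbf x_n]/(I^B_n:f_J)$ as a Solomon--Terao algebra and then peel off the coordinate hyperplanes $\alpha_i$, $i\in J$, one at a time, tracking the effect on the basis. By the corollary following Lemma \ref{arrcolonideal2} we have $\mathbb{C}[\mathbf x_n]/(I^B_n:f_J)=\mathcal{ST}(\mathcal B_J,\mathfrak a)$, and by the preceding lemma the set $\widetilde{\mathcal M}(J)=\{p_1\cdots p_n:p_i\in s_{J,i}\}$ descends to a basis of $\mathcal{ST}(\widetilde{\mathcal B_J},\mathfrak a)$, where $\widetilde{\mathcal B_J}$ is obtained from $\mathcal B_J$ by adjoining $\alpha_i$ for each $i\in J$. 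The only difference between $\widetilde{\mathcal M}(J)$ and the desired $\mathcal M(J)$ is that for $i\in J$ the $i$th factor is drawn from $t_{J,i}$ instead of $s_{J,i}$; the elementary fact driving the argument is that $s_{J,i}=\{1\}\sqcup x_i\cdot t_{J,i}$, so writing $r_i=\left|\{1,\dots,i-1\}\setminus J\right|$ we have $|s_{J,i}|=2r_i+2$ and $|t_{J,i}|=2r_i+1$.

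For $S\subseteq J$ set $\mathcal B_{J,S}=\widetilde{\mathcal B_J}\setminus\{\alpha_i:i\in S\}$ (so $\mathcal B_{J,\emptyset}=\widetilde{\mathcal B_J}$ and $\mathcal B_{J,J}=\mathcal B_J$) and $\mathcal M_S(J)=\{p_1\cdots p_n:p_i\in t_{J,i}\text{ for }i\in S,\ p_i\in s_{J,i}\text{ for }i\notin S\}$. First I would prove that every $\mathcal B_{J,S}$ is free with exponents $(e^S_1,\dots,e^S_n)$, $e^S_k=2r_k+[k\notin S]$, by Saito's criterion (Theorem \ref{saito}) exactly as in Lemma \ref{BJfree}: take $\rho^S_k=\bigl(\prod_{j\notin J,\,j<k}(x_j^2-x_k^2)\bigr)\partial_k$ when $k\in S$ and $\rho^S_k=\sum_{l\ge k}\bigl(\prod_{j\notin J,\,j<k}(x_j^2-x_l^2)\bigr)x_l\partial_l$ otherwise, and check that $\deg\rho^S_k=e^S_k$, that $\sum_k\deg\rho^S_k=|\mathcal B_{J,S}|$, that the $\rho^S_k$ are triangular hence $\mathbb{C}[\mathbf x_n]$-independent, and that $\alpha\mid\rho^S_k(\alpha)$ for every defining form $\alpha$ of $\mathcal B_{J,S}$ (the case analysis is the same as in Lemma \ref{BJfree}). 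Since $\mathfrak a_{\mathcal B_{J,S}}$ is then a complete intersection generated in degrees $e^S_k+1$, its quotient is Artinian with $\dim_{\mathbb C}\mathcal{ST}(\mathcal B_{J,S},\mathfrak a)=\prod_k(e^S_k+1)=\prod_{k\in S}|t_{J,k}|\cdot\prod_{k\notin S}|s_{J,k}|=|\mathcal M_S(J)|$.

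The inductive step is the crux: assuming $\mathcal M_S(J)$ descends to a basis of $\mathcal{ST}(\mathcal B_{J,S},\mathfrak a)$ and fixing $i\in J\setminus S$, I claim $\mathcal M_{S\cup\{i\}}(J)$ descends to a basis of $\mathcal{ST}(\mathcal B_{J,S\cup\{i\}},\mathfrak a)$. We have $\mathcal B_{J,S\cup\{i\}}=\mathcal B_{J,S}\setminus\alpha_i$ with both arrangements free, $Q(\mathcal B_{J,S})/Q(\mathcal B_{J,S\cup\{i\}})=x_i$, and $x_i\notin\mathfrak a_{\mathcal B_{J,S}}$ because the degree-one part of $\mathfrak a_{\mathcal B_{J,S}}$ is spanned by $\{x_k:k\in S,\ r_k=0\}$, which omits $x_i$. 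Theorem \ref{arrcolonideal} therefore gives $\mathfrak a_{\mathcal B_{J,S\cup\{i\}}}=(\mathfrak a_{\mathcal B_{J,S}}:x_i)$, so multiplication by $x_i$ is a well-defined \emph{injection} $\mathcal{ST}(\mathcal B_{J,S\cup\{i\}},\mathfrak a)\xrightarrow{\times x_i}\mathcal{ST}(\mathcal B_{J,S},\mathfrak a)$ whose image has dimension $\dim_{\mathbb C}\mathcal{ST}(\mathcal B_{J,S\cup\{i\}},\mathfrak a)$. Now split $\mathcal M_S(J)$ by whether the $i$th factor is $1$ or lies in $x_i\cdot t_{J,i}$: the subset $\mathcal B_1$ with $p_i\in x_i\cdot t_{J,i}$ consists of polynomials divisible by $x_i$, hence its image in $\mathcal{ST}(\mathcal B_{J,S},\mathfrak a)$ lands in the image of $\times x_i$; it is linearly independent (part of a basis); and $|\mathcal B_1|=|t_{J,i}|\prod_{k\in S}|t_{J,k}|\prod_{k\notin S\cup\{i\}}|s_{J,k}|=\dim_{\mathbb C}\mathcal{ST}(\mathcal B_{J,S\cup\{i\}},\mathfrak a)$ by the count above. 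So $\mathcal B_1$ is a basis of the image of $\times x_i$, and pulling back through this injection (that is, dividing each element of $\mathcal B_1$ by $x_i$, which merely replaces its $i$th factor $p_i\in x_i\cdot t_{J,i}$ by $p_i/x_i\in t_{J,i}$) produces a basis of $\mathcal{ST}(\mathcal B_{J,S\cup\{i\}},\mathfrak a)$; that basis is exactly $\mathcal M_{S\cup\{i\}}(J)$.

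Starting from the base case $S=\emptyset$ (the preceding lemma) and inducting on $|S|$ by adjoining the elements of $J$ in any order, we obtain that $\mathcal M(J)=\mathcal M_J(J)$ descends to a basis of $\mathcal{ST}(\mathcal B_J,\mathfrak a)=\mathbb{C}[\mathbf x_n]/(I^B_n:f_J)$, which is the claim. The work is almost entirely bookkeeping rather than conceptual: the two places demanding care are (i) verifying freeness of \emph{all} intermediate arrangements $\mathcal B_{J,S}$ with the stated exponents, so that Theorem \ref{arrcolonideal} applies and the dimension formula $\prod_k(e^S_k+1)$ is available, and (ii) checking at each peeling step that the sub-basis $\mathcal B_1$ has precisely the cardinality of the image of $\times x_i$ --- this is exactly where the split $s_{J,i}=\{1\}\sqcup x_i\cdot t_{J,i}$ is used.
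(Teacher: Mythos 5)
Your argument is correct, and it relies on exactly the two observations the paper's proof uses: the split $s_{J,i}=\{1\}\sqcup x_i\cdot t_{J,i}$, and the fact that multiplication by $x_i$ gives an injection between the relevant Solomon--Terao algebras because the smaller ideal is a colon ideal of the larger by $x_i$ (Theorem \ref{arrcolonideal}). The difference is organizational: the paper multiplies by $\prod_{j\in J}x_j$ in one shot, passing directly from $\mathcal{ST}(\mathcal B_J,\mathfrak a)=\mathbb{C}[\mathbf x_n]/(I^B_n:f_J)$ to $\mathcal{ST}(\widetilde{\mathcal B_J},\mathfrak a)=\mathbb{C}[\mathbf x_n]/(I^B_n:\widetilde{f_J})$, and observes that this injection carries $\mathcal M(J)$ into the basis $\widetilde{\mathcal M}(J)$; the needed cardinality match is then read off from the degree generating function $\prod_i[\deg p_{J,i}]_q$ already established via Lemma \ref{regularsequence}. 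You instead peel off the coordinate hyperplanes $\alpha_i$, $i\in J$, one at a time, verifying freeness and exponents of each intermediate arrangement $\mathcal B_{J,S}$ by Saito's criterion and obtaining the dimension count directly from the complete-intersection structure of $\mathfrak a_{\mathcal B_{J,S}}$. Your route is somewhat more self-contained (it does not invoke the earlier Hilbert-series computation) at the cost of checking freeness and exponents for all the intermediate arrangements, which is extra bookkeeping but entirely routine once the derivations $\rho^S_k$ are written down. Both arguments are sound and prove the same statement.
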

\begin{proof}
    The claimed basis for $\mathcal{ST}(\mathcal B_J,\mathfrak a)=\mathbb{C}[\mathbf x_n]/(I^B_n:f_J)$ agrees with the expected degree generating function $\prod_{i=1}^n[\deg p_{J,i}]_q$, so it suffices to show this set is linearly independent. To this end, note that $$\mathcal{ST}(\widetilde{\mathcal B_J},\mathfrak a)$$is equal to $\mathbb{C}[\mathbf x_n]/(I^B_n:\widetilde{f_J})$ where $$\widetilde{f_J}=\prod_{j<i\le n}(x_j^2-x_i^2).$$This follows from the fact that $\widetilde{\mathcal B_J}$ is free and Lemma \ref{arrcolonideal}. Therefore we have an injection $$\mathcal{ST}(\mathcal B_J,\mathfrak a)=\mathbb{C}[\mathbf x_n]/(I^B_n:f_J)\xrightarrow{\times\prod_{j\in J}x_j}\mathbb{C}[\mathbf x_n](I^B_n:\widetilde{f_J})=\mathcal{ST}(\widetilde{\mathcal B_J},\mathfrak a).$$This injection maps $\mathcal M(J)$ to a subset of $\widetilde{\mathcal M}(J)$, and therefore $\mathcal M(J)$ is linearly independent as claimed.
\end{proof}
Now Theorem \ref{recipe} may be combined with the above result to yield the following:
\begin{corollary}\label{hyperplanebasiscor}
    The set $\mathcal M\subseteq \Omega_n$ defined in \eqref{basis} descends to a basis of $SR^B_n$.
\end{corollary}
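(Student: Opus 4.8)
The plan is to combine two results already proved in the paper. First, Theorem \ref{recipe} reduces the problem of exhibiting a basis of $SR^B_n$ to the problem of exhibiting, for each $J \subseteq [n]$, a homogeneous basis of the quotient ring $\mathbb{C}[\mathbf x_n]/(I^B_n : f_J)$: if $\mathcal B_n(J)$ is such a basis, then $\bigsqcup_{J} \mathcal B_n(J) \cdot \theta_J$ descends to a basis of $SR^B_n$. Second, the immediately preceding Theorem shows that $\mathcal M(J) = \{p_1 \cdots p_n : p_i \in s_{J,i} \text{ for } i \notin J,\ p_i \in t_{J,i} \text{ for } i \in J\}$ descends to a basis of $\mathbb{C}[\mathbf x_n]/(I^B_n : f_J)$.

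So the proof is essentially an assembly step. First I would take $\mathcal B_n(J) := \mathcal M(J)$ for each $J \subseteq [n]$; by the preceding theorem this is a homogeneous basis of $\mathbb{C}[\mathbf x_n]/(I^B_n : f_J)$, so the hypotheses of Theorem \ref{recipe} are met. Then Theorem \ref{recipe} immediately yields that $\bigsqcup_{J \subseteq [n]} \mathcal M(J) \cdot \theta_J$ descends to a basis of $SR^B_n$. Finally I would observe that this disjoint union is precisely the set $\mathcal M$ as defined in \eqref{basis}: unwinding the definition, an element of $\mathcal M$ is exactly a product $p_1 p_2 \cdots p_n \cdot \theta_J$ with $p_i \in s_{J,i}$ when $i \notin J$ and $p_i \in t_{J,i}$ when $i \in J$, ranging over all $J \subseteq [n]$, which matches $\bigsqcup_J \mathcal M(J) \cdot \theta_J$ term by term.

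There is no real obstacle here: the substance of the argument lies in the earlier lemmas (the operator-theoretic lower bound via $\mathfrak D_J$, the colon-ideal characterization in Lemma \ref{colonideallemma}, the freeness of $\mathcal B_J$ and $\widetilde{\mathcal B_J}$, and the inductive construction of the Solomon-Terao bases). The only point requiring a moment's care is purely notational: checking that the indexing in \eqref{basis} agrees with the indexing in $\mathcal M(J)$, in particular that the "$p_{j,i} \in s_{j,i}$" appearing in the statement of the preceding theorem is read with the outer set as $J$, so that $s_{J,i}$ and $t_{J,i}$ depend on $J$ through the set $\{1,\dots,i-1\} \setminus J$ as defined at the start of the section. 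Once that bookkeeping is confirmed, the corollary follows in one line.

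\begin{proof}
    By the Theorem preceding this corollary, for each $J \subseteq [n]$ the set $\mathcal M(J)$ descends to a basis of $\mathbb{C}[\mathbf x_n]/(I^B_n : f_J)$. Taking $\mathcal B_n(J) = \mathcal M(J)$, the hypotheses of Theorem \ref{recipe} are satisfied, and we conclude that $\bigsqcup_{J \subseteq [n]} \mathcal M(J) \cdot \theta_J$ descends to a basis of $SR^B_n$. Unwinding the definitions of $\mathcal M(J)$ and of $s_{J,i}, t_{J,i}$, this disjoint union is exactly the set $\mathcal M$ defined in \eqref{basis}, which completes the proof.
\end{proof}
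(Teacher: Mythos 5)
Your proof is correct and is exactly the paper's argument: the paper states the corollary as an immediate combination of Theorem \ref{recipe} with the preceding theorem giving $\mathcal M(J)$ as a basis of $\mathbb{C}[\mathbf x_n]/(I^B_n:f_J)$. Your additional sentence unwinding the indexing to match \eqref{basis} is a harmless and accurate clarification.
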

\section{Conclusion\label{conclusion}}
In this paper, we derived the Hilbert series for $SR^B_n$ and constructed an explicit basis from images of easily factorable superspace polynomials. There are several open problems that would help refine these results.

In \cite{sagan2024q} the authors conjectured that the set of monomials $\{x_1^{a_1}\cdots x_n^{a_n}\cdot\theta_J:a_i\le \stB_i(J),\;J\subseteq[n]\}$ in $\Omega_n$ descend to a basis of $SR^B_n$. This is still unproven; the techniques used to prove the type A analogue in \cite{angarone2025superspace} do not appear to easily extend to yield monomial basis elements for the type B case. The leading monomials of the explicit basis (in lexicographical order with $x_1<x_2<\cdots<x_n$) are exactly the elements in the conjectured monomial basis; however, there does not seem to be an obvious way to derive one from the other.

Questions on the representation-theoretic structure of $SR^B_n$ as a $\mathfrak B_n$-module remain largely unanswered. The analogous situation for type A is closely connected to a celebrated conjecture made by the Fields Institute Combinatorics Group \cite{zabrocki2019module} -- as explained in \cite{rhoades2022set}, the aforementioned conjecture implies the $\mathfrak S_n$-module isomorphism 
\begin{equation}
    SR_n\cong \text{sign}\otimes \mathbb C[\mathcal{OP}_n]
\end{equation}
where $\text{sign}$ is the one-dimensional sign representation, and $\mathcal{OP}_n$ is the family of ordered set partitions of $[n]$ equipped with a natural $\mathfrak S_n$-action. 

In private communication, Rhoades conjectured that a similar statement holds for the type B case: we have an isomorphism of $\mathfrak B_n$-modules \begin{equation}\label{isoB}
    SR^B_n\cong \text{sign}^B\otimes \mathbb C\left[\mathcal{OP}_n^B\right]
\end{equation} where $\text{sign}^B$ is the sign representation of $\mathfrak B_n$ and $\mathcal{OP}^B_n$ denotes the family of \textit{signed} ordered set partitions (see \cite{sagan2024q}) for a definition) of $\{-n,-n+1,\ldots, n-1,n\}$. 

The results in this paper confirm that the spaces on both sides of \ref{isoB} above have identical Hilbert series. As such, it would suffice to construct injective (or surjective) $\mathfrak B_n$-module homomorphisms in order to establish the isomorphism in \ref{isoB}.
\section{Acknowledgements}
The author is grateful to Brendon Rhoades for many helpful conversations and suggestions.
\bibliographystyle{abbrv} 
\bibliography{refs} 
\end{document}